\newtheorem{theorem}{Theorem}[section]
\newtheorem{lemma}[theorem]{Lemma}
\newtheorem{corollary}[theorem]{Corollary}
\newtheorem{proposition}[theorem]{Proposition}
\theoremstyle{definition}
\newcounter{assum}
\newtheorem{assumption}[assum]{Assumption}
\newtheorem{remark}[theorem]{Remark}
\newtheorem{definition}[theorem]{Definition}
\numberwithin{equation}{section}
\begin{document}

\title{\Large\bf Global BMO-Sobolev Estimates for Second-Order
Linear Elliptic Equations on Lipschitz Domains
\footnotetext{\hspace{-0.35cm} 2020 {\it Mathematics Subject
Classification}. {Primary 35J25; Secondary 35B45, 35B65, 42B35, 42B37.}
\endgraf{\it Key words and phrases}. second-order elliptic equation, global regularity estimate,
Dirichlet problem, Neumann problem, Robin problem, BMO space.
\endgraf H. Dong is partially supported by the NSF under agreement DMS-2350129. D. Yang is partially supported by the National Key Research and
Development Program of China (Grant No. 2020YFA0712900) and the National Natural Science Foundation of China
(Grant Nos. 12431006 and 12371093). S. Yang is partially supported by the National Natural Science Foundation of China
(Grant Nos. 12431006 and 12071431), the Key Project of the Gansu Provincial National Science Foundation (Grant No. 23JRRA1022),
the Fundamental Research Funds for the Central Universities (Grant No. lzujbky-2021-ey18)
and the Innovative Groups of Basic Research in Gansu Province (Grant No. 22JR5RA391).}}

\author{Hongjie Dong\footnote{Corresponding author, E-mail:
\texttt{Hongjie\_Dong@Brown.edu}/{\color{red} September 27, 2024}/Final version.},\ Dachun Yang and Sibei Yang}
\date{}
\maketitle

\vspace{-0.8cm}

\begin{center}
\begin{minipage}{13.5cm}\small
{{\bf Abstract.}
Let $n \ge 2$ and $\Omega \subset \mathbb{R}^n$ be a bounded Lipschitz domain. In this article, we establish first-order global regularity estimates in the scale of BMO spaces on $\Omega$ for weak solutions to the second-order elliptic equation $\mathrm{div}(A \nabla u) = \mathrm{div} , \boldsymbol{f}$ in $\Omega$. This is achieved under minimal regularity assumptions on $\Omega$ and the coefficient matrix $A$, utilizing the pointwise multiplier characterization of the BMO space on $\Omega$. As an application, we also obtain global estimates of $\nabla u$ in the Lebesgue space $L^1(\Omega)$ when $\boldsymbol{f}$ belongs to the Hardy space on $\Omega$.}
\end{minipage}
\end{center}

\vspace{0.1cm}

\section{Introduction and main results\label{s1}}

Let $n\ge2$ and $\Omega\subset{\mathbb{R}^n}$ be a bounded Lipschitz domain. In this article,
we study the second-order elliptic equation in divergence form
\begin{equation}\label{e1.1}
\mathrm{div}(A\nabla u)=\mathrm{div}\boldsymbol{f}\ \text{in}\ \Omega,
\end{equation}
with the Dirichlet, the Neumann, or the Robin boundary condition. With minimal regularity
assumptions on $\Omega$ and the coefficient matrix $A$ (see Assumption \ref{a1.1} for the details),
we derive global estimates for $\nabla u$ in the scale of BMO spaces on $\Omega$. As applications, we also establish
the global estimate for $\nabla u$ in the Lebesgue space $L^1(\Omega)$ when
$\boldsymbol{f}$ belongs to the Hardy space on $\Omega$. The global regularity estimates
obtained in this article are natural extensions of the known global Calder\'on--Zygmund type
estimate
\begin{equation}\label{e1.2}
\|\nabla u\|_{L^p(\Omega;{\mathbb{R}^n})}\le C\|\boldsymbol{f}\|_{L^p(\Omega;{\mathbb{R}^n})},
\end{equation}
with $p\in(1,\infty)$, where $C$ is a positive constant independent of $u$ and $\boldsymbol{f}$. Our work extend this estimate to the endpoint cases of $p=\infty$ and $p=1$.

To state the main results of this article and related background, we first recall
several necessary concepts and notation. Let $n\ge2$, $\Omega\subset{\mathbb{R}^n}$ be a domain, and $p\in[1,\infty]$.
Recall that the \emph{Lebesgue space $L^p(\Omega)$} is defined to be the set of all measurable
functions $f$ on $\Omega$ satisfying
\begin{equation}\label{e1.3}
\|f\|_{L^p(\Omega)}
:=\begin{cases}
\left[\displaystyle\int_\Omega |f(x)|^p\,dx\right]^{\frac1p}<\infty, \quad & p\in[1,\infty),\\
{\displaystyle\mathop{\rm ess\,sup}_{x\in\Omega}}\,|f(x)|<\infty, \quad & p=\infty,
\end{cases}
\end{equation}
where ${\rm ess\,sup}_{x\in\Omega}{|f(x)|}$ denotes the \emph{essential supremum} of $|f|$ on $\Omega$.
Moreover, for any given $m\in\mathbb{N}$, let
\begin{equation}\label{e1.4}
L^p(\Omega;\mathbb{R}^m):=\left\{\boldsymbol{f}:=(f_1,\ldots,f_m):\ \text{for any}
\ i\in\{1,\ldots,m\},\ f_i\in L^p(\Omega)\right\}
\end{equation}
with
$$\|\boldsymbol{f}\|_{L^p(\Omega;\mathbb{R}^m)}:=\sum_{i=1}^m\|f_i\|_{L^p(\Omega)}.
$$
Additionally, we denote by $W^{1,p}(\Omega)$ the \emph{Sobolev space on $\Omega$}, equipped with the
norm:
$$
\|f\|_{W^{1,p}(\Omega)}:=\|f\|_{L^p(\Omega)}+\|\nabla f\|_{L^p(\Omega;{\mathbb{R}^n})},
$$
where $\nabla f:=(f_{x_1},\ldots,f_{x_n})$ is the \emph{gradient} of $f$ and
$\{f_{x_i}\}_{i=1}^n$ are the distributional derivatives of $f$. Furthermore, $W^{1,p}_{0}(\Omega)$
is defined to be the \emph{closure} of $C^{\infty}_{\mathrm{c}} (\Omega)$ in $W^{1,p}(\Omega)$, where
$C^{\infty}_{\mathrm{c}}(\Omega)$ denotes the set of all infinitely differentiable functions on $\Omega$
with compact support contained in $\Omega$.

We assume that the matrix $A:=\{a_{i,j}\}_{i,j=1}^n$ is real-valued, bounded,
and measurable and satisfies the \emph{uniform ellipticity condition}, that is, there exists a positive
constant $\mu_0\in(0,1]$ such that, for any $x\in\Omega$ and $\xi:=(\xi_1,\ldots,\xi_n)\in{\mathbb{R}^n}$,
\begin{equation}\label{e1.5}
\mu_0|\xi|^2\le\sum_{i,j=1}^na_{i,j}(x)\xi_i\xi_j\le \mu_0^{-1}|\xi|^2.
\end{equation}
Let $n\ge2$, $\Omega\subset{\mathbb{R}^n}$ be a bounded Lipschitz domain, $p\in[1,\infty]$, and
$\boldsymbol{f}\in L^p(\Omega;{\mathbb{R}^n})$. Denote by $\partial\Omega$ the boundary of $\Omega$ and
$\boldsymbol{\nu}:=(\nu_1,\ldots,\nu_n)$ the \emph{outward unit normal}
to $\partial\Omega$. A function $u$ is called a \emph{weak solution} of the Neumann problem
\begin{equation}\label{e1.6}
\left\{\begin{array}{ll}
\mathrm{div}(A\nabla u)=\mathrm{div} \boldsymbol{f}\ \ &\text{in}\ \Omega,\\
\displaystyle\frac{\partial u}{\partial\boldsymbol{\nu}}=\boldsymbol{f}\cdot\boldsymbol{\nu}\  &\text{on}\  \partial\Omega
\end{array}\right.
\end{equation}
if $u\in W^{1,p}(\Omega)$ and, for any $\varphi\in C^{\infty}({\mathbb{R}^n})$ (the set of all infinitely differentiable
functions on ${\mathbb{R}^n}$),
\begin{align}\label{e1.7}
\int_{\Omega}A(x)\nabla u(x)\cdot \nabla\varphi(x)\,dx=\int_{\Omega}\boldsymbol{f}(x)\cdot\nabla\varphi(x)\,dx.
\end{align}
Here and thereafter, $\dfrac{\partial u}{\partial\boldsymbol{\nu}}:=(A\nabla u)\cdot\boldsymbol{\nu}$ denotes
the \emph{conormal derivative} of $u$ on $\partial\Omega$. The Neumann problem \eqref{e1.6}
is said to be \emph{uniquely solvable} if, for any given $\boldsymbol{f}\in L^p(\Omega;{\mathbb{R}^n})$,
there exists $u\in W^{1,p}(\Omega)$, unique up to a constant, such that \eqref{e1.7} holds.
A function $u$ is called a \emph{weak solution} of the Dirichlet problem
\begin{equation}\label{e1.8}
\left\{\begin{array}{ll}
\mathrm{div}(A\nabla u)=\mathrm{div} \boldsymbol{f}\ \ &\text{in}\ \Omega,\\
u=0\  &\text{on}\  \partial\Omega
\end{array}\right.
\end{equation}
if $u\in W^{1,p}_0(\Omega)$ and \eqref{e1.7} holds for any $\varphi\in C_{\rm c}^{\infty}(\Omega)$.
The Dirichlet problem \eqref{e1.8} is said to be \emph{uniquely solvable}
if, for any given $\boldsymbol{f}\in L^{p}(\Omega;{\mathbb{R}^n})$, there exists a unique $u\in W^{1,p}_0(\Omega)$
such that \eqref{e1.7} holds for any $\varphi\in C^\infty_{\rm c}(\Omega)$.

The global regularity theory of (non-)linear elliptic equations (or systems) in non-smooth domains
is a central and compelling area of research in partial differential equations (see, for instance,
\cite{bw04,cm14,d20,k94,sh18}). For the Dirichlet problem \eqref{e1.8}, the global
Calder\'on--Zygmund type estimate \eqref{e1.2} was obtained in \cite{d96} for any $p\in(1,\infty)$ under the assumptions that $A\in\mathrm{VMO}({\mathbb{R}^n};\mathbb{R}^{n^2})$ (see, for instance, \cite{s75} for the definition of the VMO space) and $\partial\Omega\in C^{1,1}$, the latter of which was then weakened to $\partial\Omega\in C^{1}$
in \cite{aq02}. Additionally, for any given $p\in(1,\infty)$, the estimate \eqref{e1.2} was established in \cite{b05,bw04} for the Dirichlet problem \eqref{e1.8}, under the assumptions that $A$ satisfies the $(\delta,R)$-BMO
condition (see, for instance, \cite{bw04} for the definition of the $(\delta,R)$-BMO
condition) for sufficiently small $\delta\in(0,\infty)$ and that $\Omega$ is a bounded Lipschitz domain
with small Lipschitz constant or a bounded Reifenberg flat domain (see, for instance, \cite{bw04,bw05} for
the definition of the Reifenberg flat domain).
For the Dirichlet problem \eqref{e1.8} with partial small $\mathrm{BMO}$ coefficients,
the estimate \eqref{e1.2} with any given $p\in(1,\infty)$ was systematically studied in \cite{dk10,k07}, under the assumption that $\Omega$ is a bounded Lipschitz domain with small Lipschitz constant.
Meanwhile, for the problem \eqref{e1.8} in a general Lipschitz domain $\Omega$,
it was proved in \cite{sh05a} that, if $A$ is symmetric and $A\in\mathrm{VMO}({\mathbb{R}^n};\mathbb{R}^{n^2})$, then
\eqref{e1.2} holds for any $p\in(\frac32-\varepsilon,3+\varepsilon)$ when $n\ge3$ or $p\in(\frac43-\varepsilon,4+\varepsilon)$ when $n=2$,
where $\varepsilon\in(0,\infty)$ is a positive constant depending only on the Lipschitz constant of $\Omega$ and $n$.
The range of $p$ obtained in \cite{sh05a} is sharp for general Lipschitz domains (see \cite{sh05a}
for the details). We also refer to \cite{dek18,dk18,dk17} for more recent progress
on the global regularity estimate of the Dirichlet problem \eqref{e1.8}.

For the Neumann problem \eqref{e1.6}, the estimate \eqref{e1.2} was proved in \cite{aq02}
for any $p\in(1,\infty)$, under the assumptions that $A\in\mathrm{VMO}({\mathbb{R}^n};\mathbb{R}^{n^2})$ and $\partial\Omega\in C^1$.
Furthermore, for any given $p\in(1,\infty)$, when $A$ has small $\mathrm{BMO}$ coefficients and $\Omega$ is a bounded
Reifenberg flat domain or $A$ has partial small $\mathrm{BMO}$ coefficients and $\Omega$ is a bounded Lipschitz
domain with small Lipschitz constant or a bounded Reifenberg flat domain, the estimate \eqref{e1.2} was established,
respectively, in \cite{bw05} and \cite{dk10,dk12} for the Neumann problem \eqref{e1.6}.
For the Neumann problem \eqref{e1.6} on a general Lipschitz domain, it was proved in \cite{g12} that,
if $A$ is symmetric and $A\in\mathrm{VMO}({\mathbb{R}^n};\mathbb{R}^{n^2})$, then \eqref{e1.2} holds for any given
$p\in(\frac32-\varepsilon,3+\varepsilon)$ when $n\ge3$ or $p\in(\frac43-\varepsilon,4+\varepsilon)$ when $n=2$,
where $\varepsilon\in(0,\infty)$ is a positive constant depending only on the Lipschitz constant of $\Omega$ and $n$.
It is also worth pointing out that the range of $p$ such that \eqref{e1.2} holds obtained in \cite{g12} is sharp
for general Lipschitz domains. We refer to \cite{dlk20,dk18,ycyy20} for more results
on regularity estimates of the Neumann problem \eqref{e1.6}.

The global BMO estimate of the gradient for the weak solution to the Dirichlet problem \eqref{e1.8}
was established in \cite{a92} under the assumptions that the matrix $A$ satisfies a log-type BMO condition
and $\partial\Omega\in C^{1+\alpha}$ with some $\alpha\in(0,\infty)$. We also point out that the local and the global BMO estimates of the gradient to the weak solution of $p$-Laplace equations were studied in \cite{dks12,bc22}.
Global $C^1$ and weak-type $(1,1)$ estimates for the problem \eqref{e1.6} or \eqref{e1.8} were obtained
in \cite{dek18,dlk20,l17} under certain Dini continuity assumptions on the matrix $A$ and the domain $\Omega$.

In this paper, we demonstrate that, under minimal regularity assumptions on $A$ and $\partial \Omega$, the global Calder\'on--Zygmund type estimate \eqref{e1.2}, with an appropriate modified version, remains valid in the endpoint cases $p=\infty$ and $p=1$ for both the Dirichlet problem \eqref{e1.8} and the Neumann
problem \eqref{e1.6},

To state the main results of this article, we begin by recalling several concepts on Campanato type spaces and BMO type spaces on domains.

In the following, for any $x\in{\mathbb{R}^n}$ and $r\in(0,\infty)$, we define $B(x,r):=\{y\in{\mathbb{R}^n}:\ |y-x|<r\}$.
Let $\Omega\subset{\mathbb{R}^n}$ be a domain. Denote by $L^1_{\mathrm{loc}}(\Omega)$ the \emph{set of all locally integrable
functions on $\Omega$}.

\begin{definition}\label{d1.1}
Let $n\ge2$, $\Omega\subset{\mathbb{R}^n}$ be a domain, and $p\in[1,\infty)$,
and let $\omega:\ [0,\infty)\to[0,\infty)$ be a continuous and non-decreasing function. The \emph{Campanato type space} $\mathcal{L}^{\omega(\cdot),p}(\Omega)$ is defined to be the set
of all $f\in L^1_{\mathrm{loc}}(\Omega)$ satisfying
\begin{equation*}
\|f\|_{\mathcal{L}^{\omega(\cdot),p}(\Omega)}:=\sup_{x\in\Omega,r\in(0,\mathrm{diam\,}(\Omega))}
\frac{1}{\omega(r)}\left[\fint_{B(x,r)\cap\Omega}\left|f(y)-(f)_{B(x,r)\cap\Omega}\right|^p\,dy\right]^{\frac1p}<\infty.
\end{equation*}
Here and thereafter, $\mathrm{diam\,}(\Omega):=\sup\{|x-y|:\ x,y\in\Omega\}$ and, for any measurable set $E\subset \Omega$ with $|E|<\infty$ and
locally integrable (vector-valued) function $g$ on $\Omega$,
$$(g)_E:=\fint_{E}g(y)\,dy:=\frac{1}{|E|}\int_E g(y)\,dy.$$

When $p=1$, the space $\mathcal{L}^{\omega(\cdot),p}(\Omega)$ is simply denoted by $\mathcal{L}^{\omega(\cdot)}(\Omega)$. When $\omega\equiv1$, the space $\mathcal{L}^{\omega(\cdot)}(\Omega)$ is the space of functions of \emph{bounded mean oscillation} on $\Omega$ (see, for instance, \cite{bc22,n08,n97}), and is denoted by $\mathrm{BMO}(\Omega)$.
\end{definition}

We note that, under mild assumptions on $\omega$ and $\Omega$, for any given $p\in[1,\infty)$, the spaces
$\mathcal{L}^{\omega(\cdot),p}(\Omega)$ and $\mathcal{L}^{\omega(\cdot)}(\Omega)$ are equivalent (see, for instance, \cite[Theorem 3.1]{n08} or Lemma \ref{l2.1}).

Furthermore, for a bounded open set $\Omega$ of ${\mathbb{R}^n}$, if, in a neighborhood of each point of
$\partial\Omega$, $\partial\Omega$ agrees with the subgraph of a function $\psi$ of $(n-1)$ variables that
belongs to the function space $X$, then we \emph{write} $\partial\Omega\in X$. Similarly, the notation
$\partial\Omega\in W^1X$ means that such function $\psi$ is weakly differentiable and its weak derivatives
belong to the space $X$.

\begin{definition}\label{d1.2}
Let $n\ge2$ and $\Omega\subset{\mathbb{R}^n}$ be a bounded domain.
\begin{itemize}
\item[{\rm(i)}] Let $f\in L^{1}_{\mathrm{loc}}({\mathbb{R}^n})$. Then, $f$ is said to belong to the \emph{space} $\mathrm{BMO}({\mathbb{R}^n})$ if
$$\|f\|_{\mathrm{BMO}({\mathbb{R}^n})}:=\sup_{B\subset{\mathbb{R}^n}}\fint_{B}\left|f(x)-(f)_B\right|\,dx<\infty,$$
where the supremum is taken over all balls $B\subset{\mathbb{R}^n}$.

\item[{\rm(ii)}] The ``restricted type" $\mathrm{BMO}$ space $\mathrm{BMO}_r(\Omega)$ on $\Omega$ is defined by setting
$$\mathrm{BMO}_r(\Omega):=\left\{f\in L^{1}_{\mathrm{loc}}(\Omega):\ \text{there exists}\ F\in \mathrm{BMO}({\mathbb{R}^n})\
\text{such that}\  F|_{\Omega}=f\right\}.$$
For any $f\in \mathrm{BMO}_r(\Omega)$, define
$$\|f\|_{\mathrm{BMO}_r(\Omega)}:=\inf\left\{\|F\|_{\mathrm{BMO}({\mathbb{R}^n})}:\ F\in \mathrm{BMO}({\mathbb{R}^n})\ \text{and}\ F|_{\Omega}=f\right\}$$
and
$$\|f\|_{\mathrm{BMO}_{r,+}(\Omega)}:=\|f\|_{\mathrm{BMO}_r(\Omega)}+\|f\|_{L^2(\Omega)}.$$

For any given $m\in\mathbb{N}$, the space $\mathrm{BMO}_r(\Omega;\mathbb{R}^m)$ is defined via
replacing $L^p(\Omega)$ in \eqref{e1.3} by the aforementioned $\mathrm{BMO}_r(\Omega)$ in the
definition of $L^p(\Omega;\mathbb{R}^m)$ in \eqref{e1.4}.
\end{itemize}
\end{definition}

It is worth mentioning that the spaces $\mathrm{BMO}(\Omega)$ and $\mathrm{BMO}_r(\Omega)$
are suitable replacements for the Lebesgue space $L^\infty(\Omega)$ when studying the boundedness of certain operators
or the well-posedness problems of certain partial differential equations (see, for instance, \cite{a92,bg18,cds99,gg22,g14a,St93}).

\begin{remark}\label{r1.1}
Let $n\ge2$ and $\Omega$ be a bounded Lipschitz domain of ${\mathbb{R}^n}$.
\begin{itemize}
\item[{\rm(i)}] When $p=2$, by the Lax--Milgram theorem (see, for instance, \cite[Section 1.3.1, Lemma 3.1]
{j13}), we know that the Neumann problem \eqref{e1.6} and the Dirichlet problem \eqref{e1.8} are uniquely
solvable and the estimate \eqref{e1.2} holds. Meanwhile, for the Dirichlet problem \eqref{e1.8},
from the divergence theorem, it follows that, for any $\boldsymbol{f}_0\in{\mathbb{R}^n}$ and $\varphi\in C^\infty_{\rm c}(\Omega)$,
\begin{align}\label{e1.9}
\int_{\Omega}A(x)\nabla u(x)\cdot \nabla\varphi(x)\,dx=\int_{\Omega}[\boldsymbol{f}(x)-\boldsymbol{f}_0]\cdot\nabla\varphi(x)\,dx.
\end{align}
Thus, for any given $p\in(1,\infty)$, if \eqref{e1.2} holds for the Dirichlet problem \eqref{e1.8}, then the estimate \eqref{e1.2} also
holds for the problem \eqref{e1.8} with $\boldsymbol{f}$ replaced by $\boldsymbol{f}-\boldsymbol{f}_0$.

When $p\in(1,\infty)$ and $p\neq2$, the Neumann problem \eqref{e1.6} and the Dirichlet problem \eqref{e1.8}
may not be uniquely solvable (see, for instance, \cite[p.\,1285]{bw04}). Some extra conditions on
both the domain $\Omega$ and the matrix $A$ are necessary to guarantee the unique solvability of the
Neumann problem \eqref{e1.6} and the Dirichlet problem \eqref{e1.8} when $p\neq2$ (see, for instance,
\cite{bw05,bw04,d20,dk10,g12,sh05a}).
\item[{\rm(ii)}] By Lemmas \ref{l2.1} and \ref{l2.2}, we conclude that
$\mathrm{BMO}_r(\Omega)\subset L^p(\Omega)$ for any $p\in(1,\infty)$ as sets. Moreover,
it is easy to find that $L^\infty(\Omega)\subset\mathrm{BMO}_r(\Omega)$. Thus, when
$\boldsymbol{f}\in\mathrm{BMO}_r(\Omega)$, the weak solution of the Neumann problem \eqref{e1.6} or
the Dirichlet problem \eqref{e1.8} uniquely exists in $W^{1,2}(\Omega)$ or $W^{1,2}_0(\Omega)$.
\end{itemize}
\end{remark}

To state the main result of this article, we also need an assumption on the matrix $A$
and the domain $\Omega$ as follows.

\begin{assumption}\label{a1.1}
Assume that there exist a constant $R_0\in(0,\infty)$ and a
function $\sigma:\ [0,\infty)\to[0,\infty)$ such that
the matrix $A:=\{a_{i,j}\}_{i,j=1}^n$ and the domain $\Omega$ satisfy the following conditions:
\begin{itemize}
\item[(a)]
\begin{equation*}
\sum_{i,\,j=1}^n\sup_{x\in\Omega,\, r\in(0,R_0)}\frac{1}{\sigma(r)}\fint_{B(x,r)\cap\Omega}\left|a_{i,j}(y)-
\fint_{B(x,r)\cap\Omega}a_{i,j}(z)\,dz\right|\,dy<\infty ,
\end{equation*}
where the function $\sigma$ satisfies
\begin{itemize}
\item[(i)] $\lim_{r\to 0^+}\sigma(r)\ln({\frac{1}{r}})=0$, where $r\to 0^+$ \emph{means} that $r\in(0,R_0)$ and $r\to0$;
\item[(ii)] for any $s,r\in(0,R_0)$, if $C_1^{-1}s<r<C_1s$, then $C_2^{-1}\sigma(s)\le\sigma(r)\le C_2\sigma(s)$,
where $C_1$ and $C_2$ are positive constants independent of $s$ and $r$;
\item[(iii)] there exists a positive constant $C$ such that, for any $s,r\in(0,R_0)$ satisfying $s\le r$,
$\sigma(s)\le C\sigma(r)$.
\end{itemize}
\item[(b)]  $\partial\Omega\in W^1\mathcal{L}^{\sigma(\cdot)}$ with $\sigma$ being the same as in (a).
\end{itemize}
\end{assumption}

\begin{remark}
In this remark, we show that starting from a modulus of continuity $\sigma$, which only satisfies (i) and is bounded on $(0,R_0]$, we can construct another modulus of continuity $\tilde \sigma\ge \sigma$, which satisfies all conditions (i)-(iii).
Without loss of generality, by considering $\sup_{0\le s\le r}\sigma(s)$ instead of $\sigma(r)$ we may assume that $\sigma$ is nondecreasing. Now we define $\tilde \sigma(r):=\sup_{r\le s\le R_0}r\sigma(s)/s$ for $r\in (0,1)$. Then using the factor that $r\ln (1/r)$ is an increasing function on $(0,r_0)$ for small $r_0$, it is easily seen that $\tilde\sigma$ satisfies (i). Since $\sigma$ is nondecreasing, for any $r_1,r_2\in (0,R_0)$ satisfying $r_1<r_2$, we have
\begin{align*}
\tilde \sigma(r_1)&=\sup_{r_1\le s\le R_0}r_1\sigma(s)/s\le \max\{\sup_{r_1\le s\le r_2}r_1\sigma(s)/s, \sup_{r_2\le s\le R_0}r_1\sigma(s)/s\}\\
&\le\max\{\sup_{r_1\le s\le r_2}r_1\sigma(r_2)/s, \sup_{r_2\le s\le R_0}r_2\sigma(s)/s\}\le \tilde \sigma(r_2).
\end{align*}
Thus, $\tilde \sigma$ also satisfies (iii). Finally, the above inequality together with the fact that $\tilde \sigma(r)/r$ is non-increasing implies (ii).
\end{remark}

Now we state the main results of this article.

\begin{theorem}\label{th1.1}
Let $n\ge2$ and $\Omega\subset{\mathbb{R}^n}$ be a bounded Lipschitz domain. Assume that $A$ and $\Omega$ satisfy
Assumption \ref{a1.1}. Let $u\in W^{1,2}(\Omega)$ be the weak solution to the Neumann problem \eqref{e1.6} or the Dirichlet
problem \eqref{e1.8} with $\boldsymbol{f}\in\mathrm{BMO}_{r}(\Omega;{\mathbb{R}^n})$. Then $\nabla u\in
\mathrm{BMO}_{r}(\Omega;{\mathbb{R}^n})$ and there exists a positive constant $C$ independent of  $u$ and $\boldsymbol{f}$
such that
\begin{equation}\label{e1.10}
\left\|\nabla u\right\|_{\mathrm{BMO}_{r,+}(\Omega;{\mathbb{R}^n})}\le C\left\|\boldsymbol{f}\right\|_{\mathrm{BMO}_{r,+}(\Omega;{\mathbb{R}^n})}.
\end{equation}
\end{theorem}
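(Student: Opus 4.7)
The plan is to prove the estimate \eqref{e1.10} by reducing it to a quantitative Campanato-type mean-oscillation bound on $\nabla u$, and then to run a dyadic freezing/perturbation argument whose accumulated error is exactly controlled by the logarithmic decay hypothesis in Assumption \ref{a1.1}(a)(i). To set up, first I would observe that, by Remark \ref{r1.1}(ii), $\boldsymbol{f}\in\mathrm{BMO}_r(\Omega;{\mathbb{R}^n})\subset L^2(\Omega;{\mathbb{R}^n})$, so the Lax--Milgram theorem yields existence of $u\in W^{1,2}(\Omega)$ together with the bound $\|\nabla u\|_{L^2(\Omega;{\mathbb{R}^n})}\le C\|\boldsymbol{f}\|_{L^2(\Omega;{\mathbb{R}^n})}$. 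This takes care of the $L^2$-component of the $\mathrm{BMO}_{r,+}$-norm on the right of \eqref{e1.10}, so it remains to bound the semi-norm part, and by the equivalence stated after Definition \ref{d1.1} (i.e., Lemma \ref{l2.1}) it suffices to establish that
$$
\sup_{x_0\in\Omega,\,r\in(0,\mathrm{diam\,}(\Omega))}\fint_{B(x_0,r)\cap\Omega}\left|\nabla u(y)-(\nabla u)_{B(x_0,r)\cap\Omega}\right|\,dy\le C\|\boldsymbol{f}\|_{\mathrm{BMO}_{r,+}(\Omega;{\mathbb{R}^n})}.
$$
Passage from this $\mathrm{BMO}(\Omega)$-type Campanato bound to a $\mathrm{BMO}_r(\Omega)$ bound will then be achieved by an extension argument exploiting the pointwise multiplier characterization highlighted in the abstract.

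Next I would localize and freeze coefficients. For an interior ball $B(x_0,r)\Subset\Omega$, set $\bar A:=\fint_{B(x_0,r)}A$ and $\boldsymbol{f}_0:=(\boldsymbol{f})_{B(x_0,r)}$, and write $u=v+w$ where $v$ solves $\mathrm{div}(\bar A\nabla v)=0$ in $B(x_0,r)$ with $v=u$ on $\partial B(x_0,r)$. Then standard $C^{1,\alpha}$ interior estimates for constant-coefficient equations give, for any $\tau\in(0,1)$,
$$
\fint_{B(x_0,\tau r)}\left|\nabla v-(\nabla v)_{B(x_0,\tau r)}\right|dy\le C\tau^{\alpha}\fint_{B(x_0,r)}|\nabla v|\,dy,
$$
while $w\in W^{1,2}_0(B(x_0,r))$ satisfies $\mathrm{div}(\bar A\nabla w)=\mathrm{div}[(\bar A-A)\nabla u+\boldsymbol{f}-\boldsymbol{f}_0]$, whence by the Caccioppoli and Meyers-type higher integrability estimates combined with Assumption \ref{a1.1}(a),
$$
\fint_{B(x_0,r)}|\nabla w|\,dy\le C\sigma(r)\left[\fint_{B(x_0,2r)}|\nabla u|^{q}\,dy\right]^{1/q}+C\|\boldsymbol{f}\|_{\mathrm{BMO}_r(\Omega;{\mathbb{R}^n})}
$$
for some $q>1$ close to $1$. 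Near $\partial\Omega$ the same scheme is carried out after locally flattening the boundary via the graph representation of Assumption \ref{a1.1}(b): the bi-Lipschitz change of variables pulls back $A$ to a matrix whose mean oscillation is still controlled by $\sigma$ (thanks to the $W^1\mathcal{L}^{\sigma(\cdot)}$-regularity of $\partial\Omega$), and the frozen problem on a half-ball admits analogous $C^{1,\alpha}$-type decay up to the flat boundary for both the Dirichlet and the conormal Neumann conditions.

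Combining these two pieces at every scale, one obtains, with $\phi(x_0,r):=\fint_{B(x_0,r)\cap\Omega}|\nabla u-(\nabla u)_{B(x_0,r)\cap\Omega}|\,dy$, an iteration of the form
$$
\phi(x_0,\tau r)\le C\tau^{\alpha}\phi(x_0,r)+C\sigma(r)\,\mathcal{M}(|\nabla u|^{q})(x_0)^{1/q}+C\|\boldsymbol{f}\|_{\mathrm{BMO}_{r,+}(\Omega;{\mathbb{R}^n})}.
$$
Choosing $\tau$ so that $C\tau^{\alpha}\le 1/2$ and iterating over $k$ dyadic scales down from a fixed $r_1$ to $r=\tau^{k}r_1$ produces an accumulated error of order $\sum_{j=0}^{k-1}\sigma(\tau^{j}r_1)$. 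The main obstacle — and the step where every hypothesis in Assumption \ref{a1.1} is consumed — is to close this iteration uniformly as $r\to 0^{+}$: here conditions (ii) and (iii) on $\sigma$ guarantee the geometric-sum is comparable to $\sigma(r)\log(1/r)$, and the logarithmic hypothesis $\sigma(r)\ln(1/r)\to 0$ from (i) is precisely the Janson--Nakai threshold ensuring that the perturbation $(A-\bar A)\nabla u$ behaves as a pointwise multiplier on $\mathrm{BMO}(\Omega)$, so the error is absorbed. A Whitney-type covering then converts the uniform local Campanato bound into a global $\mathrm{BMO}(\Omega)$-estimate for $\nabla u$, and the pointwise multiplier characterization of $\mathrm{BMO}(\Omega)$ vs. $\mathrm{BMO}_r(\Omega)$ upgrades it to the $\mathrm{BMO}_{r,+}$-bound \eqref{e1.10}.
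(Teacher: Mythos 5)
Your overall strategy is sound and largely matches the paper's: reduce \eqref{e1.10} to a Campanato-type mean-oscillation bound on $\nabla u$, freeze coefficients in the interior, flatten the boundary, and invoke the multiplier characterization of $\mathrm{BMO}$ at the right moment. But there are two concrete gaps that would prevent the argument from closing as written.

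First, the higher-integrability exponent: you claim $q>1$ close to $1$, but the estimate
$\fint_{B}|\nabla w|\lesssim\sigma(r)\bigl[\fint_{2B}|\nabla u|^{q}\bigr]^{1/q}+\dots$
requires $q>2$. The route is through the $L^2$ energy estimate for $w$ and H\"older with exponent $(q/2)'$ on the product $(\bar A-A)\nabla u$; for $(q/2)'<\infty$ one needs $q>2$. This is exactly the purpose of the Gehring-type lemma (Propositions \ref{p4.1} and \ref{p4.2} in the paper), which supplies a reverse H\"older exponent $q\in(2,q_0)$. With $q>1$ near $1$ the perturbation term $(\bar A-A)\nabla u$ cannot be decoupled from $\nabla u$.

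Second, and more seriously, your iteration does not close. If you write $\phi(\tau r)\le C\tau^{\alpha}\phi(r)+C\sigma(r)\mathcal{M}(|\nabla u|^{q})(x_0)^{1/q}+C\|\boldsymbol{f}\|_{\mathrm{BMO}_{r,+}}$ and iterate from $r_1$ down to $\tau^k r_1$, the geometric weights $(C\tau^{\alpha})^{j}$ tame the sum, so the accumulated error is not $\sum_{j}\sigma(\tau^{j}r_1)\sim\sigma(r)\log(1/r)$ but rather $\lesssim\sigma(r_1)\,\mathcal{M}(|\nabla u|^{q})(x_0)^{1/q}$ (plus $\|\boldsymbol{f}\|$). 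This leaves an uncontrolled term containing $\nabla u$: there is no a priori bound $\mathcal{M}(|\nabla u|^{q})(x_0)^{1/q}\lesssim\|\boldsymbol{f}\|_{\mathrm{BMO}_{r,+}}$ with $q>2$. The paper resolves this precisely via Lemma \ref{l5.1}, which estimates $\fint_{\Omega\cap B(x,s)}|\nabla u|$ by $\ln(R/s)$ times $\sup_{\rho}\phi(\rho)$ plus a fixed large-scale average; substituting this into the decay estimate of Theorem \ref{t4.1} produces a term of the form $C_{\theta}\sigma(s)\ln(1/s)\sup_{\rho}\phi(\rho)$, and it is this product that Assumption \ref{a1.1}(a)(i) makes small, allowing the supremum to be absorbed (see \eqref{e5.5}--\eqref{e5.9}). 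That logarithm is where the hypothesis $\sigma(s)\ln(1/s)\to0$ genuinely enters; it does not come from summing $\sigma(\tau^{j}r_1)$. Your invocation of the Janson--Nakai multiplier threshold here is a correct heuristic for \emph{why} the condition should be sharp, but it is not a substitute for Lemma \ref{l5.1} in the proof. Also, a minor point: the multiplier characterization (Lemma \ref{l2.4}) is used in the paper to control the $\mathrm{BMO}$-norm of the flattened data $\overline{\boldsymbol{f}}\,\overline{\mathbf{J}^{t}}(\mathbf{J}_s^{-1})^{t}$ in \eqref{e4.34}, whereas the passage from a $\mathrm{BMO}(\Omega)$ bound to a $\mathrm{BMO}_r(\Omega)$ bound rests on Jones's extension theorem (Lemma \ref{l2.2}); your proposal conflates the two.
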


\begin{remark}
\begin{itemize}
\item[{\rm(i)}] From the proof of Theorem \ref{th1.1}, we deduce that, for the Dirichlet problem \eqref{e1.8}, the estimate
\eqref{e1.10} can be reinforced to
\begin{equation}\label{e1.10a}
\|\nabla u\|_{\mathrm{BMO}_{r,+}(\Omega;{\mathbb{R}^n})}\le C
\|\boldsymbol{f}\|_{\mathrm{BMO}_{r}(\Omega;{\mathbb{R}^n})}.
\end{equation}
However, the estimate \eqref{e1.10a} may \emph{not} hold for the Neumann problem
\eqref{e1.6} even when both $\Omega$ and $A$ are smooth. For example, let $B_0:=B(\mathbf{0},1)$ be a ball of $\mathbb{R}^2$,
where $\mathbf{0}$ denotes the \emph{origin} of $\mathbb{R}^{2}$. For any
$(x_1,x_2)\in B_0$, let
$$
A(x_1,x_2):=\begin{pmatrix}\dfrac{1}{1+x_1^2}
  & 0\\0
  &\dfrac{1}{1+x_2^2}
\end{pmatrix}\quad \text{and}\quad \boldsymbol{f}(x_1,x_2):=(1,1).
$$
Then $u(x_1,x_2):=(x_1+x_2)+(x_1^3+x_2^3)/3$ is a weak
solution of the Neumann problem \eqref{e1.6} with $\Omega:=B_0$. Obviously,
in this case, the coefficient matrix $A$ and the domain $B_0$ satisfy
Assumption \ref{a1.1}, $\|\nabla u\|_{\mathrm{BMO}_{r}(B_0;\mathbb{R}^2)}>0$, and
$\|\boldsymbol{f}\|_{\mathrm{BMO}_{r}(B_0;\mathbb{R}^2)}=0$. Thus, the estimate \eqref{e1.10a}
fails in this case.
\item[{\rm(ii)}] By \cite[Remark 5.3]{a92} and \cite[Theorem 2.2]{bc22}, we find that
Assumption \ref{a1.1} on the matrix $A$ and the domain $\Omega$ in Theorem \ref{th1.1} is \emph{sharp}
to guarantee  that the estimate \eqref{e1.10} holds.
\item[{\rm(iii)}] Recall that the global estimate \eqref{e1.10} was established in \cite[Theorem 2.2]{a92} for
the Dirichlet problem \eqref{e1.8} under the assumption that $A$ satisfies Assumption \ref{a1.1}(a) and the domain $\Omega$
is bounded and satisfies $\partial\Omega\in C^{1+\alpha}$ with some $\alpha\in(0,\infty)$. It is easy to find that, if the bounded
domain $\Omega$ satisfies $\partial\Omega\in C^{1+\alpha}$ with some $\alpha\in(0,\infty)$, then $\Omega$ satisfies Assumption \ref{a1.1}(b).
Thus, Theorem \ref{th1.1} improves \cite[Theorem 2.2]{a92} by weakening the assumption on the domain $\Omega$.

Moreover, the assertion of Theorem \ref{th1.1} in the case of the Neumann problem \eqref{e1.6} is new even when
the domain $\Omega$ satisfies $\partial\Omega\in C^{1+\alpha}$ with some $\alpha\in(0,\infty)$.
\end{itemize}
\end{remark}

We prove Theorem \ref{th1.1} by establishing mean oscillation-type estimates
for $\nabla u$ and utilizing the equivalent characterization of the
space $\mathrm{BMO}_r(\Omega)$ (see Lemma \ref{l2.2}) and the pointwise multiplier characterization
of the space $\mathrm{BMO}(\Omega)$ (see, for instance, \cite{j76,n97,ny97,ny85} or Lemma \ref{l2.4}).
To derive the mean oscillation estimate of $\nabla u$ in the interior of $\Omega$
(see Proposition \ref{p5.1}), Assumption \ref{a1.1}(a) for the matrix $A$ is required.

The main part of the proof of Theorem \ref{th1.1} is to establish the mean oscillation estimate
of $\nabla u$ near the boundary of $\Omega$ (see Theorems \ref{t5.1} and \ref{t5.2}).
To achieve this, we employ a flattening technique and the pointwise multiplier characterization
of the space $\mathrm{BMO}(\Omega)$. In this part,
both Assumption \ref{a1.1}(a) on the matrix $A$ and Assumption \ref{a1.1}(b) on the domain $\Omega$ are used.

Next, we recall the definitions of the Hardy space $H^1({\mathbb{R}^n})$ and the ``supported type" Hardy space
$H^1_z(\Omega)$.

\begin{definition}\label{d1.3}
Let $n\ge2$ and $\Omega$ be a bounded Lipschitz domain of ${\mathbb{R}^n}$, and let $\phi\in C^\infty_{\rm c}({\mathbb{R}^n})$
be non-negative and $\int_{{\mathbb{R}^n}}\phi(x)\,dx=1$. For any $t\in(0,\infty)$ and $x\in{\mathbb{R}^n}$, define $\phi_t(x):=t^{-n}\phi(x/t)$.
A function $f\in L^1({\mathbb{R}^n})$ is said to be in the \emph{Hardy space} $H^1({\mathbb{R}^n})$ if $\mathcal{M}(f):=\sup_{t\in(0,\infty)}
|\phi_t\ast f|\in L^1({\mathbb{R}^n})$.
Let
$$\|f\|_{H^1({\mathbb{R}^n})}:=\left\|\mathcal{M}(f)\right\|_{L^1({\mathbb{R}^n})}.$$
The ``supported type" \emph{Hardy space $H^1_{z}(\Omega)$} is defined by setting
\begin{equation*}
H^1_{z}(\Omega):=\left\{f\in H^1({\mathbb{R}^n}):\ \mathrm{supp\,}(f)\subset\overline{\Omega}\right\},
\end{equation*}
where $\overline{\Omega}$  denote the \emph{closure} of $\Omega$ in ${\mathbb{R}^n}$. Moreover, for any
$f\in H^1_{z}(\Omega)$, let $\|f\|_{H^1_{z}(\Omega)}:=\|f\|_{H^1({\mathbb{R}^n})}.$
\end{definition}

Like the spaces $\mathrm{BMO}(\Omega)$ and $\mathrm{BMO}_r(\Omega)$, the Hardy space
$H^1({\mathbb{R}^n})$ or $H^1_{z}(\Omega)$ is respectively a suitable replacement of the Lebesgue space $L^1({\mathbb{R}^n})$ or $L^1(\Omega)$
(see, for instance, \cite{C94,cds99,cks93,g14a,St93}).

As an application of Theorem \ref{th1.1} and using the fact that $\mathrm{BMO}_{r}(\Omega)$ is the dual space
of the Hardy space $H^1_{z}(\Omega)$ (see, for instance, Lemma \ref{l2.3}), we obtain the following
global regularity estimate in $L^1(\Omega;{\mathbb{R}^n})$ for the problems \eqref{e1.6} and \eqref{e1.8}.

\begin{corollary}\label{c1.1}
Let $n\ge2$ and $\Omega\subset{\mathbb{R}^n}$ be a bounded Lipschitz domain. Assume that $A$ and $\Omega$ satisfy Assumption \ref{a1.1}. Then the Neumann problem \eqref{e1.6} or the Dirichlet problem \eqref{e1.8} with
$\boldsymbol{f}\in H^1_{z}(\Omega;{\mathbb{R}^n})$ is uniquely solvable and the weak solution
$u$ belongs to $W^{1,1}(\Omega)$ or $W^{1,1}_0(\Omega)$, respectively. Moreover, there exists a positive constant
$C$ independent of $u$ and $\boldsymbol{f}$ such that
$$
\|\nabla u\|_{L^1(\Omega;{\mathbb{R}^n})}
\le C\|\boldsymbol{f}\|_{H^1_z(\Omega;{\mathbb{R}^n})}.
$$
\end{corollary}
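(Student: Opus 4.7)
My plan is to combine an approximation argument with a duality argument based on Theorem \ref{th1.1} and the Hardy--BMO duality (Lemma \ref{l2.3}). First, I would approximate $\boldsymbol{f} \in H^1_z(\Omega;{\mathbb{R}^n})$ in the $H^1$-norm by smooth vector fields $\boldsymbol{f}_k \in C^\infty_{\mathrm{c}}(\Omega;{\mathbb{R}^n})$. Each $\boldsymbol{f}_k$ lies in $L^\infty(\Omega;{\mathbb{R}^n}) \subset \mathrm{BMO}_r(\Omega;{\mathbb{R}^n})$, so Remark \ref{r1.1}(ii) produces a unique weak solution $u_k \in W^{1,2}(\Omega)$ (or $W^{1,2}_0(\Omega)$ for Dirichlet; modulo constants for the Neumann case, normalized to have mean zero) of the corresponding problem.

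The key step is the \emph{a priori} estimate $\|\nabla u_k\|_{L^1(\Omega;{\mathbb{R}^n})} \le C \|\boldsymbol{f}_k\|_{H^1_z(\Omega;{\mathbb{R}^n})}$, obtained by $L^\infty$-duality. Fix any $g \in L^\infty(\Omega;{\mathbb{R}^n})$ with $\|g\|_{L^\infty} \le 1$ and consider the adjoint problem $\mathrm{div}(A^T \nabla v) = \mathrm{div}\,g$ in $\Omega$ with the same boundary condition as $u_k$. Since $A^T$ inherits the ellipticity and Assumption \ref{a1.1}(a) from $A$ (the entries are merely permuted), and since $g \in L^\infty \subset \mathrm{BMO}_{r,+}(\Omega;{\mathbb{R}^n})$, Theorem \ref{th1.1} applied to $v$ yields $\|\nabla v\|_{\mathrm{BMO}_r(\Omega;{\mathbb{R}^n})} \le C \|g\|_{\mathrm{BMO}_{r,+}(\Omega;{\mathbb{R}^n})} \le C$. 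Using $v$ as a test function (after extending admissible tests from $C^\infty$ to $W^{1,2}$ by density) in the weak formulation for $u_k$, and $u_k$ as a test function for $v$, the symmetric identity $\int_\Omega A \nabla u_k \cdot \nabla v \, dx = \int_\Omega A^T \nabla v \cdot \nabla u_k \, dx$ produces
\[
\int_\Omega g \cdot \nabla u_k \, dx = \int_\Omega \boldsymbol{f}_k \cdot \nabla v \, dx.
\]
Lemma \ref{l2.3} then bounds the right-hand side by $C \|\boldsymbol{f}_k\|_{H^1_z} \|\nabla v\|_{\mathrm{BMO}_r} \le C \|\boldsymbol{f}_k\|_{H^1_z}$, and taking the supremum over $g$ yields the claim.

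With the uniform estimate in hand, linearity makes $\{\nabla u_k\}$ Cauchy in $L^1(\Omega;{\mathbb{R}^n})$; combined with a Poincar\'e-type inequality (or the zero-trace condition in the Dirichlet case), $\{u_k\}$ is also Cauchy in $L^1(\Omega)$. The limit $u \in W^{1,1}(\Omega)$ (or $W^{1,1}_0(\Omega)$) is then shown to satisfy the weak formulation by passing to the limit using $C^\infty$ test functions, and it inherits the desired estimate. Uniqueness is established by the same duality identity: if $u_1,u_2$ are two solutions with the same data, then $w:=u_1-u_2$ satisfies $\int_\Omega g\cdot\nabla w\,dx = 0$ for every $g\in L^\infty(\Omega;{\mathbb{R}^n})$, hence $\nabla w \equiv 0$ and $w$ is the appropriate constant (zero in the Dirichlet case).

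The main obstacle I expect is making the test-function manipulations rigorous: the weak formulation \eqref{e1.7} is stated for $C^\infty$ test functions, so I must verify by density that $v \in W^{1,2}$ is admissible on both sides of the identity, and carefully track the constant-of-integration ambiguity in the Neumann case so that the pairings $\int_\Omega g \cdot \nabla u_k\,dx$ and $\int_\Omega \boldsymbol{f}_k \cdot \nabla v\,dx$ are consistent with the normalizations chosen for $u_k$ and $v$. A secondary technical point is confirming that $C^\infty_{\mathrm{c}}(\Omega;{\mathbb{R}^n})$ (or a suitable class of atomic approximants) is $H^1_z$-dense, so that the approximating problems $\boldsymbol{f}_k$ are indeed uniquely solvable in $W^{1,2}$ via Remark \ref{r1.1}(ii).
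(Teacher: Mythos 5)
Your a priori estimate is exactly the paper's argument: you set up the transpose problem $\mathrm{div}(A^{t}\nabla v)=\mathrm{div}\,\boldsymbol g$ with $\boldsymbol g\in L^\infty$, invoke Theorem \ref{th1.1} to get $\|\nabla v\|_{\mathrm{BMO}_{r,+}}\lesssim\|\boldsymbol g\|_{L^\infty}$, derive the identity $\int_\Omega\boldsymbol g\cdot\nabla u\,dx=\int_\Omega\boldsymbol f\cdot\nabla v\,dx$, and close with Lemma \ref{l2.3}. The approximation-and-limit step is also the same in spirit; the paper approximates by $H^1_z\cap L^2$ (density cited from Stein), which sidesteps the issue you correctly flag about whether $C^\infty_{\rm c}(\Omega;{\mathbb{R}^n})$ is $H^1_z$-dense — mollification alone does not keep the support inside $\overline\Omega$, so a shrinking step or atomic approximants (as you suggest) is needed. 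That concern is minor and you identified a workable alternative.

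The genuine gap is in your uniqueness argument. Writing $w:=u_1-u_2\in W^{1,1}(\Omega)$ and asserting $\int_\Omega\boldsymbol g\cdot\nabla w\,dx=0$ via the duality chain
\begin{equation*}
\int_\Omega\boldsymbol g\cdot\nabla w\,dx=\int_\Omega A^{t}\nabla v\cdot\nabla w\,dx=\int_\Omega A\nabla w\cdot\nabla v\,dx=0
\end{equation*}
is not justified at this regularity level. The first equality tests the weak formulation for $v\in W^{1,2}$ against $w$, which by density is only licit for $w\in W^{1,2}$, not merely $W^{1,1}$; the last equality tests the weak formulation for $w\in W^{1,1}$ against $v$, which by density extends from $C^\infty$ to $W^{1,\infty}$ test functions, but Theorem \ref{th1.1} only gives $\nabla v\in\mathrm{BMO}_r$, not $L^\infty$. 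Neither pairing $\int A\nabla w\cdot\nabla v$ is even obviously finite, since $\nabla w\in L^1$ and $\nabla v$ is only in $\bigcap_{p<\infty}L^p$. The paper avoids this by citing the $W^{1,1}$ well-posedness result \cite[Theorem~1.2]{acmm10} for uniqueness. Your duality argument does establish uniqueness within $W^{1,2}$ (which is where your approximants live), but it does not yield uniqueness in the class $W^{1,1}$ asserted by the corollary.
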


Recall that the global estimate \eqref{e1.2} for the problem \eqref{e1.6} or \eqref{e1.8}
in the scale of Lebesgue spaces $L^p(\Omega)$ with any given $p\in(1,\infty)$ holds under some mild assumptions
on $A$ and $\Omega$. Moreover, global $C^1$ and weak-type
$(1,1)$ estimates for the problem \eqref{e1.6} or \eqref{e1.8} were obtained in \cite{dek18,dlk20} under the
Dini mean oscillation condition on $A$ and the $C^{1,\text{Dini}}$ condition on $\Omega$, which are somewhat stronger than those assumptions
on $A$ and $\Omega$ in Theorem \ref{th1.1} and Corollary \ref{c1.1}. Thus, the endpoint type global estimates given
in Theorem \ref{th1.1} and Corollary \ref{c1.1} can be seen as an intermediate case between the global estimate \eqref{e1.2}
in the scale of Lebesgue spaces $L^p(\Omega)$ with any given $p\in(1,\infty)$ and the global $C^1$ estimate for the problem
\eqref{e1.6} or \eqref{e1.8}.

Our last result is regarding the Robin problem. Let $n\ge2$, $\Omega\subset{\mathbb{R}^n}$ be a bounded Lipschitz domain, and $d\sigma$ be the \emph{surface measure}
on $\partial\Omega$. Assume that $\beta$ is a measurable function on $\partial\Omega$ satisfying that
\begin{equation}\label{e1.11}
0\le\beta\in L^{\infty}(\partial\Omega)\ \ \text{and}\ \ \beta\ge c_0\ \ \text{on}\ \
E_0\subset\partial\Omega,
\end{equation}
where $c_0\in(0,\infty)$ is a given constant and the measurable set $E_0$ satisfies $\sigma(E_0)>0$.
Let $p\in[1,\infty]$ and $\boldsymbol{f}\in L^p(\Omega;{\mathbb{R}^n})$. A function $u$ is called a \emph{weak solution} of the Robin problem
\begin{equation}\label{e1.12}
\left\{\begin{array}{ll}
\mathrm{div}(A\nabla u)=\mathrm{div} \boldsymbol{f}\ \ &\text{in}\ \Omega,\\
\dfrac{\partial u}{\partial\boldsymbol{\nu}}+\beta u=\boldsymbol{f}\cdot\boldsymbol{\nu}\
&\text{on}\  \partial\Omega
\end{array}\right.
\end{equation}
if $u\in W^{1,p}(\Omega)$ and, for any $\varphi\in C^{\infty}({\mathbb{R}^n})$,
\begin{align}\label{e1.13}
\int_{\Omega}A(x)\nabla u(x)\cdot \nabla\varphi(x)\,dx+\int_{\partial\Omega}\beta(x)u(x)\varphi(x)\,d\sigma(x)
=\int_{\Omega}\boldsymbol{f}(x)\cdot\nabla\varphi(x)\,dx.
\end{align}
The Robin problem \eqref{e1.12} is said to be \emph{uniquely solvable} if, for any given $\boldsymbol{f}\in L^p(\Omega;{\mathbb{R}^n})$,
there exists a unique $u\in W^{1,p}(\Omega)$ such that \eqref{e1.13} holds.
It is known that, when $p=2$, the Robin problem \eqref{e1.12} is uniquely solvable (see Remark \ref{r6.1}).

Applying Theorem \ref{th1.1} and a perturbation method, we obtain the following global regularity estimate
for the Robin problem \eqref{e1.12} in both $\mathrm{BMO}_{r}(\Omega;{\mathbb{R}^n})$ and $L^1(\Omega;{\mathbb{R}^n})$.

\begin{theorem}\label{th1.2}
Let $n\ge2$, $\Omega\subset{\mathbb{R}^n}$ be a bounded Lipschitz domain, and $\beta$ satisfies \eqref{e1.11}.
Assume that $A$ and $\Omega$ satisfy Assumption \ref{a1.1}.
\begin{itemize}
\item[\rm(i)] Let $u\in W^{1,2}(\Omega)$ be the weak solution to the Robin problem \eqref{e1.12} with
$\boldsymbol{f}\in\mathrm{BMO}_{r}(\Omega;{\mathbb{R}^n})$. Then $\nabla u\in \mathrm{BMO}_{r}(\Omega;{\mathbb{R}^n})$ and
there exists a positive constant $C$ independent of both $u$ and $\boldsymbol{f}$ such that
\begin{equation*}
\left\|\nabla u\right\|_{\mathrm{BMO}_{r,+}(\Omega;{\mathbb{R}^n})}\le C\left\|\boldsymbol{f}\right\|_{\mathrm{BMO}_{r,+}(\Omega;{\mathbb{R}^n})}.
\end{equation*}
\item[{\rm(ii)}] The Robin problem \eqref{e1.12} with $\boldsymbol{f}\in H^1_{z}(\Omega;{\mathbb{R}^n})$
is uniquely solvable and the weak solution $u\in W^{1,1}(\Omega)$. Furthermore, there exists a positive constant $C$ independent of $u$ and $\boldsymbol{f}$ such that
$$
\|\nabla u\|_{L^1(\Omega;{\mathbb{R}^n})}\le
C\|\boldsymbol{f}\|_{H^1_z(\Omega;{\mathbb{R}^n})}.
$$
\end{itemize}
\end{theorem}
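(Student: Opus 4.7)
The plan is to reduce Theorem~\ref{th1.2} to Theorem~\ref{th1.1} by viewing the Robin boundary term $\int_{\partial\Omega}\beta u\varphi\,d\sigma$ as a perturbation that can be absorbed into the right-hand side of an associated Neumann problem. The key observation is that testing \eqref{e1.13} against the constant $\varphi\equiv 1$ yields $\int_{\partial\Omega}\beta u\,d\sigma=0$ automatically, which is precisely the compatibility condition for extending $\beta u$ from $\partial\Omega$ to a divergence-free vector field on $\Omega$.

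For part (i), Remark~\ref{r6.1} (Lax--Milgram) first yields the unique weak solution $u\in W^{1,2}(\Omega)$. Since $\boldsymbol{f}\in\mathrm{BMO}_r(\Omega;\mathbb{R}^n)\subset L^p(\Omega;\mathbb{R}^n)$ for every finite $p$ by Remark~\ref{r1.1}(ii), a Meyers--Gehring higher-integrability bootstrap for the Robin problem yields $u\in W^{1,q}(\Omega)$ for some $q>n$, and by the trace theorem $\beta u\in L^\infty(\partial\Omega)$ with norm controlled by $\|\boldsymbol{f}\|_{\mathrm{BMO}_{r,+}(\Omega;\mathbb{R}^n)}$. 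I then set $\boldsymbol{\xi}:=\nabla\Phi$, where $\Phi\in W^{1,2}(\Omega)$ (unique up to constants) solves the harmonic Neumann problem $\Delta\Phi=0$ in $\Omega$ and $\partial_{\boldsymbol{\nu}}\Phi=\beta u$ on $\partial\Omega$, well-posed by the above compatibility. Then $\mathrm{div}\,\boldsymbol{\xi}=0$ in $\Omega$ and $\boldsymbol{\xi}\cdot\boldsymbol{\nu}=\beta u$ on $\partial\Omega$, so integration by parts gives $\int_{\partial\Omega}\beta u\varphi\,d\sigma=\int_\Omega\boldsymbol{\xi}\cdot\nabla\varphi\,dx$ for every $\varphi\in C^\infty(\mathbb{R}^n)$, and \eqref{e1.13} rewrites as the genuine Neumann weak form
$$
\int_\Omega A\nabla u\cdot\nabla\varphi\,dx=\int_\Omega(\boldsymbol{f}-\boldsymbol{\xi})\cdot\nabla\varphi\,dx \quad \text{for all } \varphi\in C^\infty(\mathbb{R}^n).
$$
Theorem~\ref{th1.1} then gives $\|\nabla u\|_{\mathrm{BMO}_{r,+}(\Omega;\mathbb{R}^n)}\le C(\|\boldsymbol{f}\|_{\mathrm{BMO}_{r,+}(\Omega;\mathbb{R}^n)}+\|\boldsymbol{\xi}\|_{\mathrm{BMO}_{r,+}(\Omega;\mathbb{R}^n)})$.

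The main obstacle is bounding $\|\boldsymbol{\xi}\|_{\mathrm{BMO}_{r,+}(\Omega;\mathbb{R}^n)}\le C\|\beta u\|_{L^\infty(\partial\Omega)}$, namely a BMO estimate for the gradient of the Neumann--Laplace problem on a bounded Lipschitz domain with $L^\infty$ boundary data. My plan is to recast this as an instance of Theorem~\ref{th1.1} applied to the constant matrix $A=I$ (trivially satisfying Assumption~\ref{a1.1}): construct an $L^\infty(\Omega;\mathbb{R}^n)\subset\mathrm{BMO}_{r,+}(\Omega;\mathbb{R}^n)$ vector field $\boldsymbol{G}$ with $\boldsymbol{G}\cdot\boldsymbol{\nu}=\beta u$ on $\partial\Omega$ (via a smooth extension of $\boldsymbol{\nu}$ multiplied by a Whitney-type extension of $\beta u$), and then correct its divergence through the Bogovski\u{\i} operator---the compatibility $\int_{\partial\Omega}\beta u\,d\sigma=0$ ensures solvability---to obtain a divergence-free $\widetilde{\boldsymbol{G}}$ so that $\Phi$ coincides with the Neumann solution driven by $\widetilde{\boldsymbol{G}}$ modulo a constant. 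Theorem~\ref{th1.1} then delivers the required BMO estimate on $\nabla\Phi$, closing the argument since $\|\beta u\|_{L^\infty(\partial\Omega)}\le C\|\boldsymbol{f}\|_{\mathrm{BMO}_{r,+}(\Omega;\mathbb{R}^n)}$.

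For part (ii), I use the duality $(H^1_z)^*=\mathrm{BMO}_r$ from Lemma~\ref{l2.3}. Approximate $\boldsymbol{f}\in H^1_z(\Omega;\mathbb{R}^n)$ by $\{\boldsymbol{f}_k\}\subset L^2\cap H^1_z$ converging in $H^1_z$, and let $u_k\in W^{1,2}(\Omega)$ be the corresponding Robin solutions. For any test field $\boldsymbol{g}\in L^\infty(\Omega;\mathbb{R}^n)$, let $\psi\in W^{1,2}(\Omega)$ solve the adjoint Robin problem with coefficient matrix $A^T$ (which also satisfies Assumption~\ref{a1.1}) and data $\boldsymbol{g}$; part~(i) applied to $A^T$ yields $\|\nabla\psi\|_{\mathrm{BMO}_{r,+}(\Omega;\mathbb{R}^n)}\le C\|\boldsymbol{g}\|_{L^\infty(\Omega;\mathbb{R}^n)}$. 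Testing the primal weak form for $u_k$ against $\psi$ and the adjoint against $u_k$ and subtracting makes the symmetric boundary terms $\int_{\partial\Omega}\beta u_k\psi\,d\sigma$ cancel, leaving the identity $\int_\Omega\boldsymbol{g}\cdot\nabla u_k\,dx=\int_\Omega\boldsymbol{f}_k\cdot\nabla\psi\,dx$, whose right-hand side is controlled by $C\|\boldsymbol{f}_k\|_{H^1_z(\Omega;\mathbb{R}^n)}\|\nabla\psi\|_{\mathrm{BMO}_r(\Omega;\mathbb{R}^n)}\le C\|\boldsymbol{f}_k\|_{H^1_z(\Omega;\mathbb{R}^n)}\|\boldsymbol{g}\|_{L^\infty(\Omega;\mathbb{R}^n)}$. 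Taking the supremum over $\boldsymbol{g}$ in the unit ball of $L^\infty(\Omega;\mathbb{R}^n)$ gives $\|\nabla u_k\|_{L^1(\Omega;\mathbb{R}^n)}\le C\|\boldsymbol{f}_k\|_{H^1_z(\Omega;\mathbb{R}^n)}$, and a limiting argument (using the atomic decomposition of $H^1_z$ to secure uniform integrability) produces the uniquely solvable $u\in W^{1,1}(\Omega)$ with the claimed $L^1$ bound.
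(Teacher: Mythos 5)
Your outline for part~(ii) is essentially the paper's argument (an approximation in $H^1_z$, the adjoint Robin problem with $A^t$, cancellation of the symmetric boundary integral, and the $H^1_z$--$\mathrm{BMO}_r$ duality from Lemma~\ref{l2.3}), so that part is fine. The gap is in part~(i), in the step where you try to absorb the boundary term into the interior data.

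The reduction itself is fine: writing $\boldsymbol{\xi}=\nabla\Phi$ with $\Delta\Phi=0$, $\partial_{\boldsymbol{\nu}}\Phi=\beta u$ and moving $\boldsymbol{\xi}$ to the right-hand side does turn the Robin weak form into a Neumann weak form with data $\boldsymbol{f}-\boldsymbol{\xi}$. The problem is your claim that $\|\boldsymbol{\xi}\|_{\mathrm{BMO}_{r,+}(\Omega;\mathbb{R}^n)}\lesssim\|\beta u\|_{L^\infty(\partial\Omega)}$ can be obtained by building an $L^\infty(\Omega;\mathbb{R}^n)$ field $\boldsymbol{G}$ with normal trace $\beta u$ and correcting its divergence with the Bogovski\u{\i} operator. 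This step does not close. First, $\beta u$ is only in $L^\infty(\partial\Omega)$, and on a Lipschitz domain so is $\boldsymbol{\nu}$; a Whitney-type extension $\widetilde{g}$ of an $L^\infty$ boundary function has $|\nabla\widetilde g(x)|\lesssim\|g\|_\infty/\mathrm{dist}(x,\partial\Omega)$, so $\mathrm{div}\,\boldsymbol{G}$ is not in $L^1(\Omega)$, let alone in an $L^p$ to which Bogovski\u{\i} applies, and the normal trace of a merely bounded field is not well defined. Second, even granting $\mathrm{div}\,\boldsymbol{G}\in L^p(\Omega)$ for all finite $p$, the Bogovski\u{\i} operator is unbounded at $p=\infty$ and does not map into $L^\infty$ or $\mathrm{BMO}$ with bounds uniform in $p$, so the divergence correction cannot stay in $\mathrm{BMO}_{r,+}$. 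Third, and most tellingly, asking for a \emph{divergence-free} $\widetilde{\boldsymbol{G}}\in\mathrm{BMO}_r(\Omega;\mathbb{R}^n)$ with $\widetilde{\boldsymbol{G}}\cdot\boldsymbol{\nu}=\beta u$ is exactly the question of whether $\nabla\Phi\in\mathrm{BMO}_r$, i.e.\ the estimate you are trying to prove; the detour through $\boldsymbol{G}$ and Bogovski\u{\i} is circular unless you independently establish a $\mathrm{BMO}$-gradient bound for the harmonic Neumann problem with $L^\infty(\partial\Omega)$ data, which is not among the tools the paper provides and would itself require an argument of comparable depth.

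The paper sidesteps this entirely by a duality argument: it sets $u=v+w$ with $v$ solving the Neumann problem with data $\boldsymbol{f}$ (Theorem~\ref{th1.1} handles $v$), and $w$ solving \eqref{e6.5a} with conormal data $-\beta u$. Then $\|\nabla w\|_{\mathrm{BMO}_r}$ is computed by pairing against $\boldsymbol{f}_1\in H^1_z\cap L^2$ through the adjoint Neumann solution $u_1$ (with $A^t$), giving $\int_\Omega\boldsymbol{f}_1\cdot\nabla w=-\int_{\partial\Omega}\beta u u_1\,d\sigma$, and this boundary integral is bounded by $\|u\|_{L^\infty(\Omega)}\|u_1\|_{L^1(\partial\Omega)}$. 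The $L^1(\partial\Omega)$ bound for $u_1$ comes from Corollary~\ref{c1.1} together with the $W^{1,1}$ trace theorem, and $\|u\|_{L^\infty(\Omega)}$ comes from the Robin $W^{1,p}$ theory (Lemma~\ref{l6.1}) and Sobolev embedding, rather than from a bare Meyers--Gehring bootstrap, which for $n\ge3$ would not by itself push you past $p>n$. The net effect is that all the boundary-trace work is transferred to the adjoint solution, where $L^1(\partial\Omega)$ control is available, instead of trying to build an extension of $L^\infty(\partial\Omega)$ data in $\mathrm{BMO}(\Omega)$, which is where your construction fails.
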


\begin{remark}
We point out that Theorems \ref{th1.1} and \ref{th1.2} and Corollary \ref{c1.1}
also hold for elliptic systems satisfying the strong ellipticity condition
(see, for instance, \cite[(1.2)]{dl21}). This is because the proofs of these results only use the $W^{1,p}$ estimates for elliptic equations, which is also available
for the corresponding elliptic systems (see, for instance, \cite{dk11,dk12,dl21}).
We omit the details in this article.
\end{remark}

The remainder of this article is organized as follows.
In Section \ref{s2}, we present some basic properties of the space $\mathcal{L}^{\sigma(\cdot),p}(\Omega)$,
an equivalent characterization of the space $\mathrm{BMO}_r(\Omega)$, and the pointwise multiplier characterization
of the space $\mathrm{BMO}(\Omega)$. In Section \ref{s3}, some estimates for local solutions to
the second-order elliptic equation \eqref{e1.1} are given. In Section \ref{s4}, we establish the mean oscillation-type
decay estimate of the gradient of solutions to the problem \eqref{e1.6} or \eqref{e1.8} near the boundary of $\Omega$. Finally, the proofs of Theorems \ref{th1.1} and \ref{th1.2} are given, respectively, in Sections \ref{s5} and \ref{s6}.

We finish this section by making some conventions on notation. Throughout the article, we always denote by $C$ or $c$ a \emph{positive constant}, which  
may vary from line
to line. We also use $C_{(\alpha,\beta,\ldots)}$ or $c_{(\alpha,\beta,\ldots)}$ to denote a positive
constant depending on the indicated parameters $\alpha,\beta,\ldots.$ The \emph{symbol} $f\lesssim g$ means that $f\le Cg$. If $f\lesssim g$ and $g\lesssim f$, then we write $f\sim g$.
For each ball $B:=B(x_B,r_B)$ in ${\mathbb{R}^n}$, with $x_B\in{\mathbb{R}^n}$ and $r_B\in (0,\infty)$,
and $\alpha\in(0,\infty)$, let $\alpha B:=B(x_B,\alpha r_B)$.  For any given normed spaces $\mathcal X$
and $\mathcal Y$ with the corresponding norms $\|\cdot\|_{\mathcal X}$ and $\|\cdot\|_{\mathcal Y}$, the
\emph{symbol} ${\mathcal X}\hookrightarrow{\mathcal Y}$ means that, for any $f\in \mathcal X$, $f\in
\mathcal Y$ and $\|f\|_{\mathcal Y}\le C \|f\|_{\mathcal X}$ with the positive constant $C$ independent of $f$.
For any given $n\times n$ matrix ${\mathbf T}$,
denote by ${\mathbf T}^t$ its \emph{transpose matrix}, by ${\mathbf T}^{-1}$
its \emph{inverse matrix} (if the inverse matrix of ${\mathbf T}$ exists), and by
$\mathrm{det}{\mathbf T}$ the \emph{determinant} of ${\mathbf T}$. Furthermore, for any $q\in[1,\infty]$,
we denote by $q'$ its \emph{conjugate exponent}, that is, $1/q+1/q'= 1$. Finally, for any measurable set
$E\subset{\mathbb{R}^n}$ and any (vector-valued or matrix-valued) function $f\in L^1(E)$, we denote the integral
$\int_{E}|f(x)|\,dx$ simply by $\int_{E}|f|\,dx$.

\section{Preliminaries}\label{s2}

In this section, we recall some basic properties of the space $\mathcal{L}^{\sigma(\cdot),p}(\Omega)$,
an equivalent characterization of the space $\mathrm{BMO}_r(\Omega)$, and the pointwise multiplier characterization
of the space $\mathrm{BMO}(\Omega)$.

For the space $\mathcal{L}^{\sigma(\cdot),p}(\Omega)$, we have the following result; see \cite[Theorem 3.1]{n08}.

\begin{lemma}\label{l2.1}
Let $n\ge2$, $\Omega\subset{\mathbb{R}^n}$ be a bounded Lipschitz domain, and $p\in[1,\infty)$.
Assume that the function $\sigma:\ [0,\infty)\to[0,\infty)$ satisfies (ii) and (iii) of
Assumption \ref{a1.1}(a). Then the spaces $\mathcal{L}^{\sigma(\cdot)}(\Omega)=
\mathcal{L}^{\sigma(\cdot),p}(\Omega)$ with equivalent semi-norms.
\end{lemma}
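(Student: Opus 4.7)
The plan is to establish the two continuous inclusions between $\mathcal{L}^{\sigma(\cdot)}(\Omega)$ and $\mathcal{L}^{\sigma(\cdot),p}(\Omega)$. One direction, namely
\[
\|f\|_{\mathcal{L}^{\sigma(\cdot)}(\Omega)} \le \|f\|_{\mathcal{L}^{\sigma(\cdot),p}(\Omega)},
\]
is immediate from H\"older's inequality applied inside each average $\fint_{B(x,r) \cap \Omega} |f - (f)_{B(x,r) \cap \Omega}| \, dy$; the real content of the lemma is the reverse bound, which I would read as a John--Nirenberg type estimate adapted to the modulus $\sigma$.

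After normalizing $\|f\|_{\mathcal{L}^{\sigma(\cdot)}(\Omega)} = 1$, the key intermediate step is to prove the distribution function estimate
\[
\Bigl|\bigl\{y \in B(x,r) \cap \Omega : |f(y) - (f)_{B(x,r) \cap \Omega}| > \lambda \sigma(r)\bigr\}\Bigr| \le C_1 e^{-c_2 \lambda} \,|B(x,r) \cap \Omega|
\]
uniformly in $x \in \Omega$, $r \in (0, \mathrm{diam}(\Omega))$, and $\lambda > 0$. I would obtain this by a Calder\'on--Zygmund stopping time argument inside $B_0 := B(x,r) \cap \Omega$: fix a Whitney-type dyadic system adapted to the Lipschitz domain $\Omega$, and at threshold $K \sigma(r)$ (with $K$ large) select the maximal dyadic pieces $\{Q_j\}$ on which the average of $|f - (f)_{B_0}|$ exceeds $K \sigma(r)$. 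Lebesgue differentiation gives $|f - (f)_{B_0}| \le K \sigma(r)$ almost everywhere off $\bigcup_j Q_j$, and the maximality of the selection forces $|(f)_{Q_j} - (f)_{B_0}| \lesssim K \sigma(r)$. On each $Q_j$, the function $f - (f)_{Q_j}$ still satisfies the Campanato hypothesis, but with $\sigma(\mathrm{diam}(Q_j))$ in place of $\sigma(r)$; properties (ii) and (iii) of Assumption \ref{a1.1}(a) are exactly what is needed to conclude $\sigma(\mathrm{diam}(Q_j)) \le C \sigma(r)$, so the same stopping-time procedure can be iterated on each $Q_j$ at the same threshold $K\sigma(r)$. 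Standard bookkeeping then gives geometric decay of the total measure of selected pieces across generations, whence the exponential bound.

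Given the distribution estimate, the $L^p$ bound is obtained from the layer-cake formula
\[
\fint_{B_0} |f - (f)_{B_0}|^p \, dy = \frac{p}{|B_0|} \int_0^\infty \lambda^{p-1} \Bigl|\bigl\{y \in B_0 : |f(y) - (f)_{B_0}| > \lambda\bigr\}\Bigr| \, d\lambda \lesssim \sigma(r)^p,
\]
which yields $\|f\|_{\mathcal{L}^{\sigma(\cdot),p}(\Omega)} \lesssim \|f\|_{\mathcal{L}^{\sigma(\cdot)}(\Omega)}$. The main obstacle I anticipate is implementing the stopping time decomposition cleanly inside $B_0$ when $x$ lies near $\partial\Omega$: in that case $B_0$ is not a ball but a bent half-ball, and one needs a dyadic or Whitney system of pieces relative to $\Omega$ that is doubling, exhausts $B_0$ up to null sets, and whose parents and children have comparable measures, so that selection maximality and mean comparisons pass from one generation to the next. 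Once such a system is in place, which is standard for Lipschitz domains, the classical John--Nirenberg iteration goes through, with (iii) used to stabilize the oscillation bound under passage to smaller scales and (ii) used to switch between Campanato averages on balls $B(y,s)\cap\Omega$ and averages on dyadic pieces of comparable size.
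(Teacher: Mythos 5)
The paper gives no internal proof of Lemma \ref{l2.1}; it is a direct citation of \cite[Theorem 3.1]{n08}, so there is no in-house argument to compare against. Your proposal is correct and is the classical John--Nirenberg route to this self-improvement. The easy direction is indeed H\"older. For the converse, the scaled distribution estimate is the right intermediate step, and your reading of where (ii) and (iii) enter is accurate: (iii) keeps the mean oscillation at every smaller scale inside $B(x,r)\cap\Omega$ bounded by $C\sigma(r)$, so a fixed stopping threshold $K\sigma(r)$ with $K$ large closes at every generation of the Calder\'on--Zygmund selection; (ii) lets you pass freely between the sets $B(y,s)\cap\Omega$ figuring in the Campanato seminorm and the dyadic or Whitney pieces of comparable diameter used in the decomposition. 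The geometric decay of selected measure across generations, uniform in $x$ and $r$ because a bounded Lipschitz domain is Ahlfors regular, gives the exponential bound, and the layer-cake identity yields $\fint_{B(x,r)\cap\Omega}|f-(f)_{B(x,r)\cap\Omega}|^p\,dy\lesssim\sigma(r)^p$. The technical point you flag --- producing a doubling dyadic grid adapted to $B(x,r)\cap\Omega$ when $x$ is near $\partial\Omega$, with parent--child measure comparability and exhaustion --- is indeed available: either take a Christ-type dyadic system on the space of homogeneous type $\overline{\Omega}$, or straighten the boundary with the bi-Lipschitz map $\Psi$ of Section \ref{s4.2} and pull back a Euclidean grid. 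I see no gap; this is almost certainly the same circle of ideas as in the cited result of Nakai (and, for power moduli, goes back to Spanne and Meyers).
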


Furthermore, we have the following equivalence of the spaces $\mathrm{BMO}(\Omega)$ and $\mathrm{BMO}_r(\Omega)$.

\begin{lemma}\label{l2.2}
Let $n\ge2$ and $\Omega\subset{\mathbb{R}^n}$ be a bounded Lipschitz domain. Then the spaces $\mathrm{BMO}(\Omega)
=\mathrm{BMO}_r(\Omega)$ with equivalent semi-norms.
\end{lemma}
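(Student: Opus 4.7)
The plan is to prove the two containments $\mathrm{BMO}_r(\Omega)\hookrightarrow\mathrm{BMO}(\Omega)$ and $\mathrm{BMO}(\Omega)\hookrightarrow\mathrm{BMO}_r(\Omega)$ separately, with the former being a straightforward computation and the latter relying on a BMO extension theorem.

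For the containment $\mathrm{BMO}_r(\Omega)\hookrightarrow\mathrm{BMO}(\Omega)$, suppose $f=F|_{\Omega}$ for some $F\in\mathrm{BMO}({\mathbb{R}^n})$. For any $x\in\Omega$ and $r\in(0,\mathrm{diam}(\Omega))$, I would use the standard fact that replacing the mean by any constant costs at most a factor of $2$ to write
$$
\fint_{B(x,r)\cap\Omega}\left|f(y)-(f)_{B(x,r)\cap\Omega}\right|dy
\le 2\fint_{B(x,r)\cap\Omega}\left|F(y)-(F)_{B(x,r)}\right|dy
\le \frac{2|B(x,r)|}{|B(x,r)\cap\Omega|}\fint_{B(x,r)}\left|F-(F)_{B(x,r)}\right|dy.
$$
Since $\Omega$ is a bounded Lipschitz domain, it satisfies the interior measure density condition: there exists $c_0\in(0,1)$, depending only on $n$ and the Lipschitz character of $\partial\Omega$, such that $|B(x,r)\cap\Omega|\ge c_0|B(x,r)|$ for every $x\in\Omega$ and $r\in(0,\mathrm{diam}(\Omega))$. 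Taking the supremum over $(x,r)$ and then the infimum over all admissible extensions $F$ then gives $\|f\|_{\mathrm{BMO}(\Omega)}\le(2/c_0)\|f\|_{\mathrm{BMO}_r(\Omega)}$.

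The reverse containment $\mathrm{BMO}(\Omega)\hookrightarrow\mathrm{BMO}_r(\Omega)$ is the substantive half and requires constructing, for each $f\in\mathrm{BMO}(\Omega)$, a BMO extension to ${\mathbb{R}^n}$ with controlled norm. My proposal is to invoke the extension theorem of P.\,W.\,Jones, which guarantees a bounded linear operator $E:\mathrm{BMO}(\Omega)\to\mathrm{BMO}({\mathbb{R}^n})$ satisfying $(Ef)|_{\Omega}=f$ and $\|Ef\|_{\mathrm{BMO}({\mathbb{R}^n})}\le C\|f\|_{\mathrm{BMO}(\Omega)}$ whenever $\Omega$ is a uniform domain (equivalently $(\varepsilon,\delta)$-domain), with $C$ depending only on $n$ and the uniformity constants of $\Omega$. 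Every bounded Lipschitz domain is a uniform domain, so Jones's theorem applies to our $\Omega$ and immediately delivers $\|f\|_{\mathrm{BMO}_r(\Omega)}\le\|Ef\|_{\mathrm{BMO}({\mathbb{R}^n})}\le C\|f\|_{\mathrm{BMO}(\Omega)}$.

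The main obstacle is Jones's extension theorem itself; its proof uses a Whitney decomposition of $\Omega$ paired with a reflection map across $\partial\Omega$ (enabled by the uniform domain property) and a delicate averaging of mean values along Whitney chains. Since this is classical and used in a black-box fashion here, I would simply cite it rather than reproduce the construction. With both directions in hand, the two semi-norms are comparable and the lemma follows.
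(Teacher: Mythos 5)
Your proposal is correct and follows essentially the same route as the paper: the forward embedding $\mathrm{BMO}_r(\Omega)\hookrightarrow\mathrm{BMO}(\Omega)$ via the interior measure density of Lipschitz domains, and the reverse embedding via Jones's extension theorem. One small point the paper makes explicit that you elide: Jones's theorem is stated for the intrinsic space $\widetilde{\mathrm{BMO}}(\Omega)$ (supremum over balls \emph{contained in} $\Omega$), not for $\mathrm{BMO}(\Omega)$ as defined here (supremum over $B(x,r)\cap\Omega$ with $x\in\Omega$). The paper therefore inserts the trivial observation $\mathrm{BMO}(\Omega)\hookrightarrow\widetilde{\mathrm{BMO}}(\Omega)$ before applying Jones; your phrasing applies Jones directly to $\mathrm{BMO}(\Omega)$, which is fine a fortiori since that semi-norm dominates the intrinsic one, but it is worth being explicit that these are two different semi-norms.
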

\begin{proof}
By Jones's extension theorem on the BMO space (see \cite[Theorem 1]{j80}), we know that
the spaces
\begin{equation}\label{e2.1}
\widetilde{\mathrm{BMO}}(\Omega)=\mathrm{BMO}_r(\Omega)
\end{equation}
with equivalent semi-norms.
Here and thereafter, the \emph{space} $\widetilde{\mathrm{BMO}}(\Omega)$ is defined to be the set of all
functions $f\in L^1_\mathrm{loc}(\Omega)$ satisfying
$$\|f\|_{\widetilde{\mathrm{BMO}}(\Omega)}:=\sup_{B\subset\Omega}\fint_{B}|f(x)-(f)_B|\,dx<\infty,$$
where the supremum is taken over all balls $B\subset\Omega$.

From the definitions of $\mathrm{BMO}(\Omega)$ and $\widetilde{\mathrm{BMO}}(\Omega)$,
we deduce that
$$\mathrm{BMO}(\Omega)\hookrightarrow\widetilde{\mathrm{BMO}}(\Omega),$$
which, combined with \eqref{e2.1}, further implies that $\mathrm{BMO}(\Omega)
\hookrightarrow\mathrm{BMO}_r(\Omega)$.

Thus, to finish the proof of the present lemma, it suffices to show that
$\mathrm{BMO}_r(\Omega)\subset\mathrm{BMO}(\Omega)$ and, for any $f\in\mathrm{BMO}_r(\Omega)$,
\begin{equation}\label{e2.2}
\|f\|_{\mathrm{BMO}(\Omega)}\lesssim\|f\|_{\mathrm{BMO}_r(\Omega)}.
\end{equation}
Let $f\in\mathrm{BMO}_r(\Omega)$. Then there exists $\widetilde{f}\in\mathrm{BMO}({\mathbb{R}^n})$ such that
$\widetilde{f}|_\Omega=f$ and $\|\widetilde{f}\|_{\mathrm{BMO}({\mathbb{R}^n})}\sim\|f\|_{\mathrm{BMO}_r(\Omega)}$.

Now, we prove that $f\in\mathrm{BMO}(\Omega)$ and \eqref{e2.2} holds.
Let $B:=B(x_0,r_0)\subset{\mathbb{R}^n}$ with $x_0\in\overline{\Omega}$ and $r_0\in(0,\mathrm{diam\,}(\Omega))$.
By the geometrical property of Lipschitz domains
(see, for instance, \cite[p.\,4]{k94}), we have $|B\cap\Omega|\sim|B|$,
which further implies that
\begin{align}\label{e2.4}
\fint_{B\cap\Omega}|f(y)-(f)_{B\cap\Omega}|\,dy&\lesssim\fint_B\left|\widetilde{f}(y)-\left(\widetilde{f}\right)_{B\cap\Omega}\right|\,dy\\ \nonumber
&\lesssim\fint_B\left|\widetilde{f}(y)-\left(\widetilde{f}\right)_{B}\right|\,dy+\fint_{B\cap\Omega}\left|\widetilde{f}(y)-\left(\widetilde{f}\right)_{B}\right|\,dy\\ \nonumber
&\lesssim\fint_B\left|\widetilde{f}(y)-\left(\widetilde{f}\right)_{B}\right|\,dy\lesssim\left\|\widetilde{f}\right\|_{\mathrm{BMO}({\mathbb{R}^n})}\sim\|f\|_{\mathrm{BMO}_r(\Omega)}.
\end{align}
Therefore, from \eqref{e2.4}, it follows that $f\in\mathrm{BMO}(\Omega)$ and \eqref{e2.2} holds.
This finishes the proof of Lemma \ref{l2.2}.
\end{proof}

Using \cite[Theorem A.8]{cjy16} on the atomic characterization of the Hardy space $H^1_z(\Omega)$,
similarly to the proof of \cite[Theorem 2.1]{C94} (see also \cite[Theorem 4.2]{n08}), we obtain the following
dual results between $H^1_z(\Omega)$ and $\mathrm{BMO}_r(\Omega)$; we omit its proof.

\begin{lemma}\label{l2.3}
Let $n\ge 2$ and $\Omega\subset{\mathbb{R}^n}$ be a bounded Lipschitz domain. Then the dual space of
$H_z^1(\Omega)$ is $\mathrm{BMO}_r(\Omega)$.
\end{lemma}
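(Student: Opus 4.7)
The plan is to follow the classical pattern of Chang \cite{C94} and Nakai \cite{n08} and identify both containments explicitly, using three ingredients: the atomic decomposition of $H^1_z(\Omega)$ from \cite[Theorem A.8]{cjy16}, the classical Fefferman duality $(H^1({\mathbb{R}^n}))^\ast = \mathrm{BMO}({\mathbb{R}^n})$, and Jones's extension theorem for BMO (already used in the proof of Lemma \ref{l2.2}). Throughout, an atom for $H^1_z(\Omega)$ is a function $a$ supported in $B(x_0,r) \cap \overline{\Omega}$ with $\|a\|_{L^2} \le |B(x_0,r)|^{-1/2}$, carrying the cancellation condition $\int_\Omega a\,dx = 0$ when $B(x_0,2r) \subset \Omega$ and no cancellation requirement when $B(x_0,2r)$ meets $\partial\Omega$.

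For the inclusion $\mathrm{BMO}_r(\Omega) \hookrightarrow (H^1_z(\Omega))^\ast$, I would start from $g \in \mathrm{BMO}_r(\Omega)$ and choose $G \in \mathrm{BMO}({\mathbb{R}^n})$ with $G|_\Omega = g$ and $\|G\|_{\mathrm{BMO}({\mathbb{R}^n})} \lesssim \|g\|_{\mathrm{BMO}_r(\Omega)}$, then define $\ell_g(f) := \int_\Omega g f\,dx$ on the dense subspace of finite atomic sums. For an interior atom $a$ with supporting ball $B$, the bound $|\int_\Omega g a\,dx| \lesssim \|g\|_{\mathrm{BMO}_r(\Omega)}$ follows by subtracting the constant $(g)_{B\cap\Omega}$ and applying Hölder's inequality together with $|B\cap\Omega|\sim|B|$ and the John--Nirenberg inequality on $\Omega$. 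For a boundary atom, one instead uses the size estimate directly and the fact that on a Lipschitz domain the averages $(G)_{B\cap\Omega}$ are controlled by $\|G\|_{\mathrm{BMO}({\mathbb{R}^n})}$ up to additive constants that may be absorbed into the equivalence class of $g$. Summing $\sum_j |\lambda_j| \lesssim \|f\|_{H^1_z(\Omega)}$ gives the desired boundedness.

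For the converse $(H^1_z(\Omega))^\ast \hookrightarrow \mathrm{BMO}_r(\Omega)$, I would invoke Hahn--Banach: the zero-extension map $H^1_z(\Omega) \hookrightarrow H^1({\mathbb{R}^n})$ is an isometric embedding by the very definition of $H^1_z(\Omega)$, so any $\ell \in (H^1_z(\Omega))^\ast$ extends to $\widetilde{\ell} \in (H^1({\mathbb{R}^n}))^\ast$ with the same norm. By the Fefferman--Stein theorem there exists $G \in \mathrm{BMO}({\mathbb{R}^n})$, unique modulo constants, with $\|G\|_{\mathrm{BMO}({\mathbb{R}^n})} \lesssim \|\widetilde{\ell}\| = \|\ell\|$ representing $\widetilde\ell$ via the atomic pairing. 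Setting $g := G|_\Omega$ we obtain $g \in \mathrm{BMO}_r(\Omega)$ with $\|g\|_{\mathrm{BMO}_r(\Omega)} \le \|G\|_{\mathrm{BMO}({\mathbb{R}^n})} \lesssim \|\ell\|$, and a density argument on finite atomic sums identifies $\ell$ with integration against $g$.

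The main obstacle, as in \cite{C94}, is handling the \emph{boundary atoms}: these carry no vanishing-mean condition, so the pairing cannot be estimated by the oscillation of $g$ alone and one must exploit the interplay between the size constraint on the atom and the Lipschitz geometry of $\partial\Omega$, essentially in the same way that the equivalence $\mathrm{BMO}(\Omega) = \mathrm{BMO}_r(\Omega)$ of Lemma \ref{l2.2} was established. A secondary subtlety is the ambiguity of the BMO-extension $G$ modulo constants, but this ambiguity is harmless because the atoms whose supports sit strictly inside $\Omega$ kill constants, whereas for boundary atoms the constant can be absorbed by adjusting $g$ within its own quotient class in $\mathrm{BMO}_r(\Omega)$.
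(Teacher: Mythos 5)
Your overall strategy (atomic characterization from \cite{cjy16}, pairing against a BMO extension, Fefferman--Stein duality, Jones extension) is in the same spirit as the reference proof of Chang \cite{C94} that the paper invokes, and the Hahn--Banach shortcut for the inclusion $(H^1_z(\Omega))^\ast \hookrightarrow \mathrm{BMO}_r(\Omega)$ is a clean, correct alternative to directly reconstructing the BMO function from the functional: the zero-extension $H^1_z(\Omega)\hookrightarrow H^1({\mathbb{R}^n})$ is by definition an isometric inclusion onto a closed subspace, and restricting the Fefferman--Stein representative $G$ of the Hahn--Banach extension gives $g=G|_\Omega$ with $\|g\|_{\mathrm{BMO}_r(\Omega)}\le\|G\|_{\mathrm{BMO}({\mathbb{R}^n})}\lesssim\|\ell\|$.

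However, your description of the atomic system for $H^1_z(\Omega)$ is wrong, and the error is not cosmetic. By Definition \ref{d1.3}, $H^1_z(\Omega)$ is a subspace of $H^1({\mathbb{R}^n})$, and for $\Omega$ bounded every $f\in H^1_z(\Omega)$ is a compactly supported $H^1({\mathbb{R}^n})$-element, hence $\int f=0$. Consequently an atom for $H^1_z(\Omega)$ must itself be an $H^1({\mathbb{R}^n})$-atom supported in $\overline\Omega$, and in particular it must carry the cancellation $\int a=0$ regardless of whether its supporting ball is interior or meets $\partial\Omega$. A ``boundary atom'' without cancellation, as you describe it, is not in $H^1({\mathbb{R}^n})$ at all; such atoms belong to the local variant $h^1_z(\Omega)$, whose dual is the nonhomogeneous space $\mathrm{bmo}_r(\Omega)$, not $\mathrm{BMO}_r(\Omega)$. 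The error actually bites in your forward direction: for a genuinely non-cancelling atom $a$ supported near $\partial\Omega$ in a ball of radius $r$, the pairing $\int_\Omega g\,a\,dx$ picks up the term $(g)_{B\cap\Omega}\int a$, and $|(g)_{B\cap\Omega}|$ can grow like $\log(1/r)\,\|g\|_{\mathrm{BMO}(\Omega)}$; this is not controlled by the BMO seminorm, and the ambiguity cannot be absorbed ``within the quotient class of $g$'' because a single additive constant cannot simultaneously kill the nonzero means of infinitely many atoms. Once you replace your atom definition with the correct one (all atoms have $\int a=0$, supporting balls centered in $\overline\Omega$ but not necessarily contained in $\Omega$), the forward estimate becomes a single uniform computation: subtract $(g)_{B\cap\Omega}$, use H\"older and $|B\cap\Omega|\sim|B|$, and there is no separate boundary case to handle. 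With that correction and your Hahn--Banach argument for the converse, the proof goes through and matches the structure of \cite{C94} and \cite{n08}.
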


Let $\Omega\subset{\mathbb{R}^n}$ be a bounded Lipschitz domain. Denote by $M(\mathrm{BMO}(\Omega))$
the space of pointwise multipliers of $\mathrm{BMO}(\Omega)$, namely the space of all functions $g$ such that
$fg\in\mathrm{BMO}(\Omega)$ for any $f\in\mathrm{BMO}(\Omega)$, endowed with the \emph{norm}
$$
\|g\|_{M(\mathrm{BMO}(\Omega))}:=\sup\{\|fg\|_{\mathrm{BMO}(\Omega)}:\ f\in \mathrm{BMO}(\Omega),\
\|f\|_{\mathrm{BMO}(\Omega)}\le1\}.
$$
Then the following equivalent characterization for $M(\mathrm{BMO}(\Omega))$ is well known (see, for instance, \cite{j76,n97,ny97}).

\begin{lemma}\label{l2.4}
Let $n\ge 2$ and $\Omega\subset{\mathbb{R}^n}$ be a bounded Lipschitz domain. Then
$$
M(\mathrm{BMO}(\Omega))=L^\infty(\Omega)\cap \mathcal{L}^{\sigma_0(\cdot)}(\Omega),
$$
where, for any $r\in(0,\mathrm{diam\,}(\Omega))$, $\sigma_0(r):=(1+|\ln r|)^{-1}$.
\end{lemma}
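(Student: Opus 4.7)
The plan is to prove the two inclusions separately. The central computational tool is the bilinear expansion
\[
fg - (f)_B (g)_B = (f-(f)_B)(g-(g)_B) + (g)_B(f-(f)_B) + (f)_B(g-(g)_B),
\]
valid on any ball $B\subset\Omega$, together with the elementary inequality $\fint_B |fg-(fg)_B|\le 2\fint_B |fg-(f)_B(g)_B|$. A second recurring ingredient is the logarithmic growth of BMO averages,
\[
|(f)_B-(f)_{B_*}|\le C(1+|\ln r_B|)\,\|f\|_{\mathrm{BMO}(\Omega)},
\]
obtained for any fixed reference ball $B_*\subset\Omega$ by telescoping over a chain of doubling balls joining $B$ to $B_*$ and exploiting the comparability $|B\cap\Omega|\sim|B|$ for balls centered in a bounded Lipschitz domain.

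For the sufficiency direction ($\supset$), I would take $g\in L^\infty(\Omega)\cap\mathcal{L}^{\sigma_0(\cdot)}(\Omega)$ and $f\in\mathrm{BMO}(\Omega)$, and bound the three terms produced by the bilinear expansion. The first two are directly controlled by $\|g\|_{L^\infty(\Omega)}\|f\|_{\mathrm{BMO}(\Omega)}$, using $|g-(g)_B|\le 2\|g\|_{L^\infty(\Omega)}$ and $|(g)_B|\le\|g\|_{L^\infty(\Omega)}$. The third term, $|(f)_B|\fint_B |g-(g)_B|$, is handled via the Campanato estimate $\fint_B|g-(g)_B|\le\sigma_0(r_B)\|g\|_{\mathcal{L}^{\sigma_0(\cdot)}(\Omega)}$ combined with the logarithmic growth bound for $|(f)_B|$; since $\sigma_0(r)(1+|\ln r|)\le C$, the logarithm cancels and we obtain
\[
\|fg\|_{\mathrm{BMO}(\Omega)}\lesssim \bigl(\|g\|_{L^\infty(\Omega)}+\|g\|_{\mathcal{L}^{\sigma_0(\cdot)}(\Omega)}\bigr)\bigl(\|f\|_{\mathrm{BMO}(\Omega)}+|(f)_{B_*}|\bigr).
\]

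For the necessity direction ($\subset$), given $g\in M(\mathrm{BMO}(\Omega))$, I would first show $g\in\mathrm{BMO}(\Omega)$ by testing with $f\equiv 1$, and then extract $g\in\mathcal{L}^{\sigma_0(\cdot)}(\Omega)$ by testing against logarithmic BMO functions. For each ball $B=B(x_0,r_B)\subset\Omega$ of small radius, choose a smooth truncation $\phi_B\in\mathrm{BMO}({\mathbb{R}^n})$ of $\ln(\mathrm{diam}(\Omega)/|x-x_0|)$ with $\|\phi_B\|_{\mathrm{BMO}({\mathbb{R}^n})}$ uniformly bounded and $|(\phi_B)_B|\ge c(1+|\ln r_B|)$. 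Rearranging the bilinear expansion with $f=\phi_B$ to solve for $|(\phi_B)_B|\fint_B|g-(g)_B|$, the remaining cross-terms are absorbed using the multiplier hypothesis $\|\phi_B g\|_{\mathrm{BMO}(\Omega)}\le\|g\|_{M(\mathrm{BMO}(\Omega))}\|\phi_B\|_{\mathrm{BMO}(\Omega)}$ together with the logarithmic growth of $|(g)_B|$; this yields $(1+|\ln r_B|)\fint_B|g-(g)_B|\lesssim 1$, which is precisely the desired $\mathcal{L}^{\sigma_0(\cdot)}$ estimate.

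The main obstacle is establishing $g\in L^\infty(\Omega)$: although the $\mathcal{L}^{\sigma_0(\cdot)}$ bound is near-borderline, it does not by itself imply pointwise boundedness of $g$. Following the strategy of Nakai--Yabuta \cite{ny85,n97}, I would argue by contradiction: if $g$ were essentially unbounded, one could select a Lebesgue density point $x_\ast$ of a superlevel set $\{x\in\Omega:|g(x)|>N\}$ with $N$ arbitrarily large, and then construct a BMO test function localized near $x_\ast$ whose product with $g$ violates the multiplier bound. This localization argument is the most delicate part of the proof, and it depends essentially on the interplay between the $\sigma_0$-oscillation of $g$ and the John--Nirenberg inequality.
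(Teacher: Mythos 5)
The paper does not actually prove Lemma~\ref{l2.4}: it states the result is ``well known'' and cites Janson and Nakai/Nakai--Yabuta. Your proposal is an outline of the classical proof found in those references, and the overall architecture (bilinear expansion for sufficiency, testing with constants and truncated logarithms for necessity) is the standard one and is essentially correct.

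Two substantive remarks. First, the right-hand side $|(f)_{B_*}|$ you obtain in the sufficiency estimate is not a blemish: it flags a normalization issue that the paper's definition of $M(\mathrm{BMO}(\Omega))$ glosses over. With $\|\cdot\|_{\mathrm{BMO}(\Omega)}$ a mere seminorm vanishing on constants, the displayed supremum over $\|f\|_{\mathrm{BMO}(\Omega)}\le 1$ would force $\|g\|_{\mathrm{BMO}(\Omega)}=0$ for any $g\in M(\mathrm{BMO}(\Omega))$, which contradicts the statement. Both your $|(f)_{B_*}|$ correction in the forward direction and your ``test with $f\equiv 1$'' step in the reverse direction implicitly use the genuine Banach norm $\|f\|_{\mathrm{BMO}(\Omega)}+|(f)_{B_*}|$; this convention should be stated once explicitly, since it is what makes the lemma (and your estimates) correct.

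Second, the $L^\infty$ part of necessity is considerably simpler than the density-point/John--Nirenberg machinery you sketch, and the same logarithmic test functions you use for $\mathcal{L}^{\sigma_0(\cdot)}$ already suffice. Having shown $g\in\mathrm{BMO}(\Omega)$ by testing $f\equiv 1$, suppose $g$ is unbounded and pick Lebesgue points $x_k$ of $g$ with $|g(x_k)|\to\infty$, balls $B_k:=B(x_k,r_k)$ with $|(g)_{B_k}|\ge \tfrac12|g(x_k)|$, and $f_k(x):=\log(\mathrm{diam\,}(\Omega)/|x-x_k|)$. Cauchy--Schwarz and the $L^2$-BMO equivalence give $|(f_kg)_{B_k}-(f_k)_{B_k}(g)_{B_k}|=O(\|f_k\|_{\mathrm{BMO}}\|g\|_{\mathrm{BMO}})=O(1)$. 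On the other hand, $|(f_k)_{B_k}|\sim 1+|\ln r_k|$, while the log-growth-of-averages estimate applied to $h=f_kg$ together with the multiplier bound gives $|(f_kg)_{B_k}|\lesssim (1+|\ln r_k|)\|f_kg\|_{\mathrm{BMO}}+|(f_kg)_{B_*}|\lesssim (1+|\ln r_k|)$. Combining these yields $|(g)_{B_k}|=O(1)$ uniformly, a contradiction; Lebesgue differentiation then gives $g\in L^\infty(\Omega)$. Your density-point approach will also work, but the above is more direct and does not require a separate argument. One last minor point: the telescoping/chain step requires not only $|B\cap\Omega|\sim|B|$ but also that $\Omega$ be chain-connected (a John domain), which bounded Lipschitz domains are; this is the same geometric fact the paper invokes in the proof of Lemma~\ref{l2.2}.
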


\section{Local solution estimates}\label{s3}
In this section, we establish several estimates for local solutions to second-order elliptic equations
\eqref{e1.1} in the domain $\Omega$.

Let $n\ge2$, $\Omega\subset{\mathbb{R}^n}$ be a bounded Lipschitz domain, and the matrix $A$ be the same as in
\eqref{e1.5}.
A function $u\in W^{1,2}_{\mathrm{loc}}(\Omega)$ is called a \emph{local weak solution} to the equation \eqref{e1.1}
if, for any domain $O$ satisfying $\overline{O}\subset\Omega$ and any $\varphi\in C^\infty_{\rm c}(O)$,
\begin{equation}\label{e3.1}
\int_{O}A(x)\nabla u(x)\cdot\nabla\varphi(x)\,dx=\int_{O}\boldsymbol{f}(x)\cdot\nabla\varphi(x)\,dx
\end{equation}
holds.

Assume that $B\subset\Omega$ is a ball and $u$ is a local weak solution to the equation \eqref{e1.1}.
Then we consider a weak solution $v\in W^{1,2}(B)$ to the Dirichlet problem
\begin{equation}\label{e3.2}
\left\{\begin{array}{ll}
\mathrm{div}(A\nabla v)=0\ \ &\text{in}\ B,\\
v=u\  &\text{on}\  \partial B,
\end{array}\right.
\end{equation}
where the matrix $A$ is the same as in the problem \eqref{e1.6}.
We also point out that, as usual, the boundary condition in \eqref{e3.2} is understood
in the sense that $u-v\in W_{0}^{1,2}(B)$.

For the Dirichlet problem \eqref{e3.2}, we have the following estimate.

\begin{lemma}\label{l3.1}
Let $n\ge2$, $\Omega\subset{\mathbb{R}^n}$ be a bounded Lipschitz domain, and $B\subset\Omega$ be a ball.
Assume that $u$ is a local weak solution to the equation \eqref{e1.1} and $v$ is a weak solution to the
problem \eqref{e3.2}. Then there exists a positive constant $C$, independent of $u,$ $v,$
 $\boldsymbol{f}$, and $B$, such that, for any $\boldsymbol{f}_0\in \mathbb{R}^{n}$,
\begin{equation*}
\fint_{B}\left|\nabla u(x)-\nabla v(x)\right|^2\,dx\le C\fint_{B}\left|\boldsymbol{f}(x)-\boldsymbol{f}_0\right|^2\,dx.
\end{equation*}
\end{lemma}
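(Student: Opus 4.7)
The plan is to run a standard energy estimate on the difference $w := u - v$. By hypothesis $v$ agrees with $u$ on $\partial B$ in the trace sense, so $w \in W_0^{1,2}(B)$. Subtracting the weak formulation \eqref{e3.1} for $u$ (applied on a domain $O$ with $\overline{B}\subset O$ and $\overline{O}\subset\Omega$, restricted to test functions supported in $B$) from the weak formulation of \eqref{e3.2} for $v$ shows that
$$\int_B A(x)\nabla w(x)\cdot\nabla\varphi(x)\,dx = \int_B \boldsymbol{f}(x)\cdot\nabla\varphi(x)\,dx \qquad \text{for every } \varphi\in C_c^\infty(B),$$
and hence, by the density of $C_c^\infty(B)$ in $W_0^{1,2}(B)$, for every $\varphi\in W_0^{1,2}(B)$.

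To insert the constant vector $\boldsymbol{f}_0$, I would observe that whenever $\varphi\in W_0^{1,2}(B)$, approximating $\varphi$ by $C_c^\infty(B)$ functions and applying the divergence theorem yields $\int_B \boldsymbol{f}_0\cdot\nabla\varphi\,dx = 0$. Thus the right-hand side above is unchanged if $\boldsymbol{f}$ is replaced by $\boldsymbol{f} - \boldsymbol{f}_0$.

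Finally I would take $\varphi = w$ itself as the test function. The uniform ellipticity \eqref{e1.5} controls the left-hand side from below by $\mu_0\int_B|\nabla w|^2\,dx$, while Cauchy--Schwarz together with Young's inequality $ab\le \frac{\mu_0}{2}a^2 + \frac{1}{2\mu_0}b^2$ bounds the right-hand side above by $\frac{\mu_0}{2}\int_B|\nabla w|^2\,dx + \frac{1}{2\mu_0}\int_B|\boldsymbol{f}-\boldsymbol{f}_0|^2\,dx$. Absorbing the first term into the left-hand side and dividing by $|B|$ gives the claim with $C = \mu_0^{-2}$.

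This is a textbook Caccioppoli-type energy argument and I anticipate no genuine obstacle; the only item requiring a line of care is the density justification allowing $w$ to serve as its own test function, and the elementary observation that $\boldsymbol{f}_0$ may be subtracted at no cost.
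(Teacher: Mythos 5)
Your proposal is correct and follows essentially the same route as the paper: test the weak formulation with $u-v \in W_0^{1,2}(B)$, use uniform ellipticity to bound $\int_B |\nabla(u-v)|^2$ from below, use that the constant $\boldsymbol{f}_0$ drops out against the gradient of a $W_0^{1,2}(B)$ function, and close with Young's inequality. The only cosmetic difference is that you first form the equation satisfied by $w=u-v$ and then test, whereas the paper tests \eqref{e3.1} directly with $u-v$ and uses the homogeneous equation for $v$ to cancel the $A\nabla v$ term; these are the same computation.
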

\begin{proof}
Recall that $v$ is a weak solution to the Dirichlet problem \eqref{e3.2}.
Choosing $u-v\in W^{1,2}_0(B)$ as a test function in \eqref{e3.1} and using the uniform ellipticity condition
\eqref{e1.5},  we have, for any $\boldsymbol{f}_0\in{\mathbb{R}^n}$,
\begin{align*}
\fint_{B}\left|\nabla u(x)-\nabla v(x)\right|^2\,dx&\le\mu_0^{-1}\fint_{B} A(x)\nabla(u-v)(x)\cdot
\nabla(u-v)(x)\,dx\\
&=\mu_0^{-1}\fint_{B} A(x)\nabla u(x)\cdot\nabla(u-v)(x)\,dx\\
&=\mu_0^{-1}\fint_{B}\left[\boldsymbol{f}(x)-\boldsymbol{f}_0\right]\cdot\nabla(u-v)(x)\,dx,
\end{align*}
which, combined with Young's inequality, further implies that, for any given $\delta\in(0,1)$,
\begin{equation}\label{e3.3}
\fint_{B}|\nabla u(x)-\nabla v(x)|^2\,dx\le\delta\fint_{B}|\nabla u(x)-\nabla v(x)|^2\, dx
+C_{(\delta)}\fint_{B}\left|\boldsymbol{f}(x)-\boldsymbol{f}_0\right|^2\,dx.
\end{equation}
Taking $\delta:=\frac12$ in \eqref{e3.3}, we then find that, for any given $\boldsymbol{f}_0\in{\mathbb{R}^n}$,
\begin{equation*}
\fint_{B}|\nabla u(x)-\nabla v(x)|^2\,dx\lesssim\fint_{B}\left|\boldsymbol{f}(x)-\boldsymbol{f}_0\right|^2\,dx.
\end{equation*}
This finishes the proof of Lemma \ref{l3.1}.
\end{proof}

We also have the following mean oscillation estimate for local solutions of \eqref{e1.1}.

\begin{proposition}\label{p3.1}
Let $n\ge2$, $\Omega\subset{\mathbb{R}^n}$ be a bounded Lipschitz domain, and $\delta\in(0,1)$. Assume that
$u$ is a local weak solution to the equation \eqref{e1.1}, where $A$ is a constant matrix and satisfies the uniform ellipticity condition \eqref{e1.5}. Then there exist positive constants $C$ depending only on $n$ and $\mu_0$, but
independent of $\delta$, and $C_{(n,\mu_0,\delta)}$, depending only on $n$, $\mu_0$, and $\delta$, such that,
for any ball $B\subset\Omega$ and $\boldsymbol{f}_0\in \mathbb{R}^{n}$,
\begin{align}\label{e3.4}
\left[\fint_{\delta B}\left|\nabla u(x)-(\nabla u)_{\delta B}\right|^2\,dx\right]^{\frac12}&\le C\delta
\left[\fint_{B}\left|\nabla u(x)-(\nabla u)_{B}\right|^2\,dx\right]^{\frac12}\\ \nonumber
&\quad+C_{(n,\mu_0,\delta)}\left[\fint_{B}\left|\boldsymbol{f}(x)-\boldsymbol{f}_0\right|^2\,dx\right]^{\frac12}.
\end{align}
\end{proposition}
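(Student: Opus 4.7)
\medskip

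\noindent\textbf{Plan of proof of Proposition \ref{p3.1}.}
The plan is a standard ``freeze-the-coefficients plus harmonic replacement'' argument, combined with interior Campanato-type decay for constant coefficient equations. Fix a ball $B\subset\Omega$ and let $v\in W^{1,2}(B)$ solve the Dirichlet problem \eqref{e3.2}, so that $\mathrm{div}(A\nabla v)=0$ in $B$ and $u-v\in W^{1,2}_0(B)$. Split
\begin{equation*}
\nabla u - (\nabla u)_{\delta B} = \bigl[\nabla v - (\nabla v)_{\delta B}\bigr] + \bigl[\nabla(u-v) - (\nabla(u-v))_{\delta B}\bigr],
\end{equation*}
and estimate each piece separately. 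The whole point is that the homogeneous piece gains a factor of $\delta$, while the inhomogeneous piece, which does not gain smallness, is controlled by $\boldsymbol{f}-\boldsymbol{f}_0$ via Lemma \ref{l3.1}, at the cost of a $\delta$-dependent constant coming from the volume ratio $|B|/|\delta B|=\delta^{-n}$.

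First, for the homogeneous part, I would invoke the classical interior decay estimate for constant-coefficient elliptic equations: since $A$ is constant and $\mathrm{div}(A\nabla v)=0$ in $B$, each partial derivative $\partial_i v$ is again a weak solution of the same equation, hence smooth in the interior with the pointwise Lipschitz bound
\begin{equation*}
\|\nabla^2 v\|_{L^\infty(\frac12 B)}\le \frac{C_{(n,\mu_0)}}{r_B}\left[\fint_{B}\bigl|\nabla v - (\nabla v)_B\bigr|^2\,dx\right]^{\frac12},
\end{equation*}
where I have used that $\nabla v - c$ is also a solution for any constant $c\in\mathbb{R}^n$. A mean-value type estimate on $\delta B\subset\frac12 B$ (taking $\delta\in(0,1/2]$; the range $\delta\in(1/2,1)$ is trivial since the left-hand side of \eqref{e3.4} is bounded by the right-hand side with $C$ depending only on $n,\mu_0$) then gives
\begin{equation*}
\left[\fint_{\delta B}\bigl|\nabla v - (\nabla v)_{\delta B}\bigr|^2\,dx\right]^{\frac12} \le C_{(n,\mu_0)}\,\delta\left[\fint_{B}\bigl|\nabla v - (\nabla v)_B\bigr|^2\,dx\right]^{\frac12}.
\end{equation*}

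Next, for the inhomogeneous part $w:=u-v$, the triangle inequality for the oscillation gives
\begin{equation*}
\left[\fint_{\delta B}\bigl|\nabla w - (\nabla w)_{\delta B}\bigr|^2\,dx\right]^{\frac12}
\le 2\left[\fint_{\delta B}|\nabla w|^2\,dx\right]^{\frac12}
\le 2\,\delta^{-n/2}\left[\fint_{B}|\nabla w|^2\,dx\right]^{\frac12},
\end{equation*}
and Lemma \ref{l3.1} applied to $u,v$ on $B$ yields
\begin{equation*}
\left[\fint_{B}|\nabla w|^2\,dx\right]^{\frac12}\le C_{(n,\mu_0)}\left[\fint_{B}|\boldsymbol{f}-\boldsymbol{f}_0|^2\,dx\right]^{\frac12}.
\end{equation*}
Finally, to express the right-hand side in terms of $\nabla u$ rather than $\nabla v$, I would use
\begin{equation*}
\left[\fint_{B}\bigl|\nabla v - (\nabla v)_B\bigr|^2\,dx\right]^{\frac12}
\le \left[\fint_{B}\bigl|\nabla u - (\nabla u)_B\bigr|^2\,dx\right]^{\frac12}
+ 2\left[\fint_{B}|\nabla w|^2\,dx\right]^{\frac12},
\end{equation*}
and absorb the second summand into the $\boldsymbol{f}$-term (again via Lemma \ref{l3.1}). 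Combining the three displays gives \eqref{e3.4} with $C$ depending only on $n,\mu_0$ and $C_{(n,\mu_0,\delta)}$ of the form $C_{(n,\mu_0)}\delta^{-n/2}$.

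\medskip

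\noindent\textbf{Main obstacle.} The only nontrivial point is the interior Campanato decay for $\nabla v$; the delicate part is that we must gain the factor $\delta$ rather than $\delta^\alpha$ with $\alpha<1$. The key observation that unlocks this is that when $A$ is \emph{constant}, $\partial_i v$ is itself a solution, so the standard $L^\infty$-Lipschitz estimate for solutions transfers to a pointwise Lipschitz bound for $\nabla v$, giving the full $\delta$ factor rather than a Hölder-type gain. Once this is granted, the rest of the argument is a mechanical combination of the triangle inequality, the trivial volume comparison $|B|=\delta^{-n}|\delta B|$, and Lemma \ref{l3.1}.
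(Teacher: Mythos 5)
Your proposal is correct and follows essentially the same route as the paper: harmonic replacement via the constant-coefficient Dirichlet problem \eqref{e3.2}, the interior $C^{1,1}$-type decay estimate for $\nabla v$ (which the paper cites directly from Han--Lin, Lemma 3.10, rather than re-deriving from the $L^\infty$ bound on $\nabla^2 v$ as you sketch), Lemma \ref{l3.1} to control $\nabla(u-v)$, and the volume factor $\delta^{-n/2}$. The only cosmetic difference is that the paper avoids your explicit decomposition of $\nabla u-(\nabla u)_{\delta B}$ by instead invoking the $L^2$-minimizing property of the mean, replacing $(\nabla u)_{\delta B}$ with $(\nabla v)_{\delta B}$ at the start, which saves one triangle inequality but changes nothing of substance.
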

\begin{proof}
Fix a ball $B\subset\Omega$. Let $v$ be a weak solution to the Dirichlet problem \eqref{e3.2}.
From \cite[Lemma 3.10]{hl}, it follows that
$$
\left[\fint_{\delta B}\left|\nabla v(x)-(\nabla v)_{\delta B}\right|^2\,dx\right]^{\frac12}\le C
\delta\left[\fint_{B}\left|\nabla v(x)-(\nabla v)_{B}\right|^2\,dx\right]^{\frac12},
$$
where $C$ is a positive constant independent of $\delta,$ $B$, and $v$.
By this and Lemma \ref{l3.1}, we conclude that, for any $\boldsymbol{f}_0\in{\mathbb{R}^n}$,
\begin{align*}
&\left[\fint_{\delta B}\left|\nabla u(x)-(\nabla u)_{\delta B}\right|^2\,dx\right]^{\frac12}
\le \left[\fint_{\delta B}\left|\nabla u(x)-(\nabla v)_{\delta B}\right|^2\,dx\right]^{\frac12}\\
&\quad\le\left[\fint_{\delta B}\left|\nabla u(x)-\nabla v(x)\right|^2\,dx\right]^{\frac12}+\left[\fint_{\delta B}\left|\nabla v(x)-(\nabla v)_{\delta B}\right|^2\,dx\right]^{\frac12}\\
&\quad\le C\delta^{-n/2}\left[\fint_{B}\left|\boldsymbol{f}(x)-\boldsymbol{f}_0\right|^2\,dx\right]^{\frac12}
+C\delta\left[\fint_{B}|\nabla v(x)-(\nabla v)_{B}|^2\,dx\right]^{\frac12}\\
&\quad\le C\delta^{-n/2}\left[\fint_{B}\left|\boldsymbol{f}(x)-\boldsymbol{f}_0\right|^2\,dx\right]^{\frac12}
+C\delta\left[\fint_{B}|\nabla u(x)-(\nabla u)_{B}|^2\,dx\right]^{\frac12}\\
&\qquad+C\delta\left[\fint_{B}\left|\nabla u(x)-\nabla v(x)\right|^2\,dx\right]^{\frac12}\\
&\quad\le C\delta\left[\fint_{B}\left|\nabla u(x)-(\nabla u)_{B}\right|^2\,dx\right]^{\frac12}
+C\delta^{-n/2}\left[\fint_{B}\left|\boldsymbol{f}(x)-\boldsymbol{f}_0\right|^2\,dx\right]^{\frac12}.
\end{align*}
Thus, \eqref{e3.4} holds. This finishes the proof of Proposition \ref{p3.1}.
\end{proof}

\section{Mean oscillation-type decay estimates near the boundary}\label{s4}

In this section, we establish mean oscillation-type decay estimates of the gradient
of the solution to the problem \eqref{e1.6} or \eqref{e1.8} near the boundary of $\Omega$.
To achieve this, we need to prove a Gehring type estimate for the gradient of solutions to the problem \eqref{e1.6} or \eqref{e1.8} near the boundary of $\Omega$
and to use a flattening technique 
and the pointwise multiplier characterization of the space $\mathrm{BMO}(\Omega)$.

\subsection{A Gehring type estimate near the boundary}

In this subsection, we give a Gehring type estimate for the gradient of solutions to the problem \eqref{e1.6} or \eqref{e1.8} near the boundary of $\Omega$
and the mean oscillation-type decay estimate for the gradient of the solution to the problem
\eqref{e1.6} or \eqref{e1.8} in the interior of $\Omega$.

The following Proposition \ref{p4.1} is known to the expert. However, we give
its proof in this subsection for the sake of completeness.

\begin{proposition}\label{p4.1}
Let $n\ge2$ and $\Omega\subset{\mathbb{R}^n}$ be a bounded Lipschitz domain. Assume that $u$ is a weak solution
to the Neumann problem \eqref{e1.6} with $\boldsymbol{f}\in L^2(\Omega;{\mathbb{R}^n})$. Then there exists
a constant $q_0\in(2,\infty)$, depending on $n,$ $\mu_0$, $A$, and $\Omega$, such that, for any given $q\in(2,q_0)$,
any ball $B:=B(x_0,r_0)\subset{\mathbb{R}^n}$ with $x_0\in\overline{\Omega}$ and $r_0\in(0,\mathrm{diam\,}(\Omega))$, and
any $\boldsymbol{f}_0\in{\mathbb{R}^n}$,
\begin{equation}\label{e4.1}
\fint_{B}|\nabla u(x)|^{q}\,dx\le C\left[\left(\fint_{2B}|\nabla u(x)|\,dx\right)^{q}+\fint_{2B}
|\boldsymbol{f}(x)-\boldsymbol{f}_0|^{q}\,dx\right]
\end{equation}
when $2B\subset\Omega$, and
\begin{equation}\label{e4.2}
\fint_{B\cap\Omega}|\nabla u(x)|^{q}\,dx\le C\left[\left(\fint_{2B\cap\Omega}|\nabla u(x)|\,dx\right)^{q}+\fint_{2B\cap\Omega}
|\boldsymbol{f}(x)|^{q}\,dx\right]
\end{equation}
when $2B\cap\partial\Omega\neq\emptyset$, where $C$ is a positive constant depending only on $n$, $\mu_0$, and $\Omega$, but independent of $B,$ $u$, and
$\boldsymbol{f}$.
\end{proposition}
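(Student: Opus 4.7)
The plan is the classical three-step Gehring argument: derive a Caccioppoli-type energy estimate by inserting a suitable test function into the weak formulation, upgrade it to a reverse Hölder inequality with a subquadratic exponent via the Sobolev--Poincaré inequality, and then invoke the Giaquinta--Modica self-improvement lemma to push the exponent above $2$.

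For the interior case $2B\subset\Omega$, fix a cutoff $\eta\in C^\infty_{\rm c}(2B)$ with $\eta\equiv 1$ on $B$ and $|\nabla\eta|\lesssim r_0^{-1}$, and test the weak formulation with $\varphi:=\eta^2(u-(u)_{2B})$. Since $\varphi$ is compactly supported in $2B\subset\Omega$, the quantity $\int_\Omega \boldsymbol{f}_0\cdot\nabla\varphi\,dx$ vanishes for any constant vector $\boldsymbol{f}_0\in\mathbb{R}^n$, so $\boldsymbol{f}$ may be replaced by $\boldsymbol{f}-\boldsymbol{f}_0$ throughout the estimate. Invoking \eqref{e1.5} and Young's inequality in the standard manner yields the Caccioppoli-type bound
\begin{equation*}
\fint_B|\nabla u(x)|^2\,dx\lesssim r_0^{-2}\fint_{2B}|u(x)-(u)_{2B}|^2\,dx+\fint_{2B}|\boldsymbol{f}(x)-\boldsymbol{f}_0|^2\,dx,
\end{equation*}
and combining this with the Sobolev--Poincaré inequality $r_0^{-2}\fint_{2B}|u-(u)_{2B}|^2\,dx\lesssim(\fint_{2B}|\nabla u|^{2_*}\,dx)^{2/2_*}$, with $2_*:=2n/(n+2)$, produces a local reverse Hölder inequality with exponent pair $(2,2_*)$ plus the inhomogeneous term coming from $\boldsymbol{f}-\boldsymbol{f}_0$.

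For the boundary case $2B\cap\partial\Omega\neq\emptyset$, the same scheme works with one key adjustment. By density of $C^\infty(\mathbb{R}^n)|_{\Omega}$ in $W^{1,2}(\Omega)$ for Lipschitz $\Omega$, I may test \eqref{e1.7} with any $\varphi\in W^{1,2}(\Omega)$, so $\varphi:=\eta^2(u-(u)_{2B\cap\Omega})$ is admissible for $\eta\in C^\infty_{\rm c}(2B)$ satisfying $\eta\equiv 1$ on $B$. Because $\varphi$ is no longer compactly supported in $\Omega$, subtracting a constant from $\boldsymbol{f}$ is no longer legitimate, which explains why \eqref{e4.2} contains $|\boldsymbol{f}|^q$ rather than $|\boldsymbol{f}-\boldsymbol{f}_0|^q$. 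The Poincaré and Sobolev--Poincaré inequalities on the Lipschitz set $2B\cap\Omega$ have constants depending only on $n$ and the Lipschitz character of $\Omega$ (via the uniform cone condition), and the argument then proceeds to give the boundary reverse Hölder estimate
\begin{equation*}
\fint_{B\cap\Omega}|\nabla u|^2\,dx\lesssim\Big(\fint_{2B\cap\Omega}|\nabla u|^{2_*}\,dx\Big)^{2/2_*}+\fint_{2B\cap\Omega}|\boldsymbol{f}|^2\,dx.
\end{equation*}
Finally, Gehring's lemma, in its standard formulation covering simultaneously interior and boundary balls, yields some $q_0>2$, depending only on $n$, $\mu_0$, and the Lipschitz character of $\Omega$, such that \eqref{e4.1} and \eqref{e4.2} hold for every $q\in(2,q_0)$, with the lowest available exponent ($1$) appearing on the right.

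The main obstacle is the boundary case, specifically verifying the two ingredients uniformly in $B$: first, that the Sobolev--Poincaré inequality on $2B\cap\Omega$ has a constant independent of the particular ball (this is precisely where the Lipschitz hypothesis on $\Omega$ enters essentially); and second, that the local reverse Hölder bounds on interior and boundary balls can be combined so that the Gehring--Giaquinta--Modica lemma applies globally and produces a single exponent $q_0>2$. Both are by now classical for Lipschitz domains but deserve careful citation, whereas the interior piece is essentially a textbook computation.
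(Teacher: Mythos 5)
Your proposal follows the same route as the paper's proof: test the weak formulation with $\eta^2(u-(u)_{2B})$ (interior) or $\eta^2(u-(u)_{2B\cap\Omega})$ (boundary), use ellipticity, Young's inequality, and Sobolev--Poincar\'e to obtain a reverse H\"older inequality with a subcritical exponent on the right, and then apply Gehring's lemma (the paper cites Iwaniec's version, and the Sobolev--Poincar\'e inequality on Lipschitz balls from Buckley--Koskela). Your observation that the compact support of the test function in the interior case is precisely what permits replacing $\boldsymbol{f}$ by $\boldsymbol{f}-\boldsymbol{f}_0$, and that this fails at the boundary, matches the paper's reasoning exactly.
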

\begin{proof}
Let $B:=B(x_0,r_0)$ be the same as the present proposition and $\eta\in C^\infty_{\mathrm{c}}({\mathbb{R}^n})$ satisfy that $0\le\eta\le1$,
$\eta\equiv1$ on $B$, $\mathrm{\,supp\,}(\eta)\subset\frac{3}{2}B$, and $|\nabla\eta|\lesssim r_0^{-1}$.
We first assume that $2B\subset\Omega$. In this case, taking $\eta^2[u-(u)_{2B}]$ as a test function,
we obtain that, for any $\boldsymbol{f}_0\in{\mathbb{R}^n}$,
\begin{align*}
\int_\Omega A(x)\nabla u(x)\cdot\nabla\left(\eta^2\left[u-(u)_{2B}\right]\right)(x)\,dx&=
\int_\Omega \boldsymbol{f}(x)\cdot\nabla\left(\eta^2\left[u-(u)_{2B}\right]\right)(x)\,dx\\ \nonumber
&=\int_\Omega[\boldsymbol{f}(x)-\boldsymbol{f}_0]\cdot\nabla\left(\eta^2\left[u-(u)_{2B}\right]\right)(x)\,dx,
\end{align*}
which further implies that
\begin{align*}
\int_\Omega \left[\eta(x)\right]^2A(x)\nabla u(x)\cdot\nabla u(x)\,dx&=-\int_\Omega 2\eta(x)\left[u(x)-(u)_{2B}\right]
A(x)\nabla u(x)\cdot\nabla \eta(x)\,dx\\ \nonumber
&\quad+\int_\Omega \left[\eta(x)\right]^2[\boldsymbol{f}(x)-\boldsymbol{f}_0]\cdot\nabla u(x)\,dx\\ \nonumber
&\quad+\int_\Omega 2\eta(x)\left[u(x)-(u)_{2B}\right][\boldsymbol{f}(x)-\boldsymbol{f}_0]\cdot\nabla \eta(x)\,dx.
\end{align*}
From this, the uniform ellipticity condition \eqref{e1.5}, and Young's inequality,
it follows that, for any given $\varepsilon\in(0,1)$,
\begin{align}\label{e4.3}
\fint_{2B} |\nabla u(x)|^2[\eta(x)]^2\,dx&\le\mu_0^{-1}\fint_{2B}[\eta(x)]^2
A(x)\nabla u(x)\cdot\nabla u(x)\,dx\\ \nonumber
&\le\varepsilon\fint_{2B} |\nabla u(x)|^2[\eta(x)]^2\,dx+C_{(\varepsilon)}\fint_{2B}
\left|\boldsymbol{f}(x)-\boldsymbol{f}_0\right|^2[\eta(x)]^2\,dx\\ \nonumber
&\quad+C_{(\varepsilon)}r_0^{-2}\fint_{2B}|u(x)-(u)_{2B}|^2\,dx.
\end{align}
Take $\varepsilon:=1/2$ in \eqref{e4.3}. Then, by \eqref{e4.3}, the assumption that $\eta\equiv1$ on
$B$, and the Sobolev--Poincar\'e inequality (see, for instance, \cite[Theorem 1.1]{bk95}), we further deduce that
\begin{align}\label{e4.4}
\left[\fint_{B} |\nabla u(x)|^2\,dx\right]^{\frac12}&\lesssim\left[\fint_{2B}|\nabla u(x)|^{\frac{2n}{n+2}}\,dx\right]^{\frac{n+2}{2n}}
+\left[\fint_{2B}\left|\boldsymbol{f}(x)-\boldsymbol{f}_0\right|^2\,dx\right]^{\frac12}.
\end{align}
Using \eqref{e4.4} and a version of Gehring's lemma as in \cite{i98}, we conclude that there
exists an exponent $q_0\in(2,\infty)$ such that, for any given $q\in(2,q_0)$, the estimate \eqref{e4.1}
holds.

Next, assume that $2B\cap\partial\Omega\neq\emptyset$. In this case,
taking $\eta^2[u-(u)_{2B\cap\Omega}]$ as a test function, we then have
\begin{align}\label{e4.5}
\int_\Omega A(x)\nabla u(x)\cdot\nabla\left(\eta^2\left[u-(u)_{2B\cap\Omega}\right]\right)(x)\,dx=
\int_\Omega \boldsymbol{f}(x)\cdot\nabla\left(\eta^2\left[u-(u)_{2B\cap\Omega}\right]\right)(x)\,dx.
\end{align}
Using \eqref{e4.5} and repeating the proof of \eqref{e4.4}, we conclude that
\eqref{e4.2} also holds in this case. This finishes the proof of Proposition \ref{p4.1}. 	
\end{proof}

\begin{proposition}\label{p4.2}
Let $n\ge2$ and $\Omega\subset{\mathbb{R}^n}$ be a bounded Lipschitz domain. Assume that $u$ is the weak solution
to the Dirichlet problem \eqref{e1.8} with $\boldsymbol{f}\in L^2(\Omega;{\mathbb{R}^n})$. Then there exists
a constant $q_0\in(2,\infty)$, depending on $n$, $\mu_0,$ $A$, and $\Omega$, such that, for any given $q\in(2,q_0)$,
any ball $B:=B(x_0,r_0)\subset{\mathbb{R}^n}$ with $x_0\in\overline{\Omega}$ and $r_0\in(0,\mathrm{diam\,}(\Omega))$, and
any $\boldsymbol{f}_0\in{\mathbb{R}^n}$,
\begin{equation}\label{e4.6}
\fint_{B\cap\Omega}|\nabla u(x)|^{q}\,dx\le C\left[\left(\fint_{2B\cap\Omega}|\nabla u(x)|\,dx\right)^{q}
+\fint_{2B\cap\Omega}\left|\boldsymbol{f}(x)-\boldsymbol{f}_0\right|^{q}\,dx\right],
\end{equation}
where $C$ is a positive constant depending only on $n$, $\mu_0$, and $\Omega$, but independent of $B,$ $u$, and $\boldsymbol{f}$.
\end{proposition}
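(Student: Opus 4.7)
The plan is to mirror the proof of Proposition~\ref{p4.1}, with two modifications specific to the Dirichlet setting. First, \eqref{e1.9} extends by density to any test function $\varphi\in W^{1,2}_0(\Omega)$, so the weak formulation is invariant under the replacement $\boldsymbol{f}\mapsto\boldsymbol{f}-\boldsymbol{f}_0$ for any fixed $\boldsymbol{f}_0\in{\mathbb{R}^n}$; hence $\boldsymbol{f}-\boldsymbol{f}_0$ may be used in both the interior and the boundary cases (this is why the constant $\boldsymbol{f}_0$ appears in \eqref{e4.6}, in contrast to the boundary case of Proposition~\ref{p4.1}). Second, since $u\in W^{1,2}_0(\Omega)$, the product $\eta^2 u$ lies in $W^{1,2}_0(\Omega)$ for any $\eta\in C^\infty_{\mathrm{c}}({\mathbb{R}^n})$ and is thus an admissible test function, which removes the need to subtract a mean in the boundary case.

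Concretely, fix a ball $B:=B(x_0,r_0)$ with $x_0\in\overline{\Omega}$ and pick $\eta$ as in the proof of Proposition~\ref{p4.1}. When $2B\subset\Omega$, the argument is identical to the interior case of Proposition~\ref{p4.1} and yields the interior half of \eqref{e4.6}. When $2B\cap\partial\Omega\neq\emptyset$, testing against $\eta^2 u$, expanding $\nabla(\eta^2 u)=\eta^2\nabla u+2\eta u\nabla\eta$, and applying the uniform ellipticity \eqref{e1.5} together with Young's inequality with $\varepsilon=1/2$, one obtains, for any $\boldsymbol{f}_0\in{\mathbb{R}^n}$,
$$\fint_{2B\cap\Omega}\eta^2|\nabla u(x)|^2\,dx\lesssim\fint_{2B\cap\Omega}|\boldsymbol{f}(x)-\boldsymbol{f}_0|^2\,dx+r_0^{-2}\fint_{2B\cap\Omega}|u(x)|^2\,dx.$$
The last term is controlled via the Sobolev--Poincar\'e inequality for $W^{1,2}_0(\Omega)$-functions on Lipschitz domains: extending $u$ by zero to ${\mathbb{R}^n}$, so that $u$ vanishes on $2B\setminus\Omega$, one has
$$\left(\fint_{2B\cap\Omega}|u(x)|^2\,dx\right)^{\frac12}\lesssim r_0\left(\fint_{2B\cap\Omega}|\nabla u(x)|^{\frac{2n}{n+2}}\,dx\right)^{\frac{n+2}{2n}}.$$
Combining these produces a reverse-H\"older estimate analogous to \eqref{e4.4}, and the same version of Gehring's lemma used in Proposition~\ref{p4.1} then upgrades integrability to some $q_0\in(2,\infty)$, yielding \eqref{e4.6}.

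The main obstacle is the Sobolev--Poincar\'e inequality just invoked, which requires $u$ to vanish on a subset of the relevant ball of measure comparable to that of the ball. This is not automatic when $x_0$ is deep inside $\Omega$ and $2B$ merely touches $\partial\Omega$ tangentially. It is resolved by the uniform exterior (corkscrew) property of Lipschitz domains: for any $z\in\partial\Omega$ and any sufficiently small $\rho$, one has $|B(z,\rho)\setminus\Omega|\gtrsim\rho^n$. Choosing $z\in 2B\cap\partial\Omega$ and a mild concentric enlargement $\kappa B$ with $\kappa>2$ depending only on $n$ and the Lipschitz constant of $\Omega$ guarantees $|\kappa B\setminus\Omega|\gtrsim|\kappa B|$, so the Sobolev--Poincar\'e inequality applies on $\kappa B\cap\Omega$. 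The harmless enlargement of the right-hand ball is absorbed by Gehring's lemma, which is insensitive to a uniform dilation constant, and hence the final conclusion still takes the form \eqref{e4.6}.
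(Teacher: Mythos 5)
Your proof is correct and follows the same approach the paper intends: the paper's proof is just the one-sentence ``use \eqref{e1.9} and argue as in Proposition~\ref{p4.1}, details omitted,'' and you have supplied exactly the modifications those omitted details require — namely, replacing the test function $\eta^2[u-(u)_{2B\cap\Omega}]$ (which is not admissible in $W^{1,2}_0(\Omega)$ since $(u)_{2B\cap\Omega}\neq 0$) by $\eta^2 u$, and then invoking the Sobolev--Poincar\'e inequality for the zero extension of $u$, which requires the uniform exterior measure-density property of Lipschitz domains after a mild dilation. Your observation that subtracting $\boldsymbol{f}_0$ is legitimate in the boundary case here (unlike in \eqref{e4.2}) precisely because test functions vanish on $\partial\Omega$ also matches the paper's use of \eqref{e1.9}.
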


\begin{proof}
Since $u$ is the weak solution of the Dirichlet problem \eqref{e1.8},
it follows that, for any $\varphi\in C^\infty_{\rm c}(\Omega)$ and any $\boldsymbol{f}_0\in{\mathbb{R}^n}$, \eqref{e1.9} holds.
Using \eqref{e1.9}, similarly to the proof of Proposition \ref{p4.1}, we obtain \eqref{e4.6};
we omit the details.
\end{proof}

We also have the following interior mean oscillation-type decay estimate for the weak solution to the problems
\eqref{e1.6} and \eqref{e1.8}.

\begin{proposition}\label{p4.3}
Let $n\ge2$ and $\Omega\subset{\mathbb{R}^n}$ be a bounded Lipschitz domain. Assume that the matrix $A$
and the function $\sigma$ satisfy Assumption \ref{a1.1}(a). Assume further that $\theta\in(0,1)$, $\boldsymbol{f}\in
L^2(\Omega;{\mathbb{R}^n})$, and $u\in W^{1,2}(\Omega)$ is the weak solution to the Neumann problem \eqref{e1.6} or the Dirichlet
problem \eqref{e1.8}. Then there exists a constant $q\in(2,\infty)$ such that, for any ball $B:=B(x,r)$
satisfying $2B\subset\Omega$ and $r\in(0,R_0)$ with $R_0\in(0,\infty)$ being the same as in Assumption \ref{a1.1}(a)
and for any $\boldsymbol{f}_0\in{\mathbb{R}^n}$,
\begin{align}\label{e4.7}
&\left[\fint_{\theta B}|\nabla u(y)-(\nabla u)_{\theta B}|^2\,dy\right]^{\frac{1}{2}}\\ \notag
&\quad\le C_{(n,\mu_0,\theta)}\sigma(r)\fint_{2B}
|\nabla u(y)|\,dy+C_{(n,\mu_0,\theta,R_0)}\left[\fint_{2B}\left|\boldsymbol{f}(y)-\boldsymbol{f}_0
\right|^{q}\,dy\right]^{\frac{1}{q}}\\ \notag
&\qquad+C\theta\left[\fint_{2B}|\nabla u(y)-(\nabla u)_{2B}|^2\,dy\right]^{\frac{1}{2}},
\end{align}
where $C_{(n,\mu_0,\theta)}$ is a positive constant depending on $n$, $\mu_0$, and $\theta$, $C_{(n,\mu_0,\theta,R_0)}$
is a positive constant depending on $n$, $\mu_0$, $\theta$, and $R_0$,
and $C$ is a positive constant independent of $u,$ $\boldsymbol{f},$ $B$, and $\theta$.
\end{proposition}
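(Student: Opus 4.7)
The plan is to combine a coefficient-freezing argument with the constant-coefficient mean oscillation decay of Proposition~\ref{p3.1}, controlling the perturbation error by Assumption~\ref{a1.1}(a) together with the Gehring-type reverse H\"older estimates of Propositions~\ref{p4.1} and~\ref{p4.2}.

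First I would freeze the coefficients at $\bar{A}:=(A)_{2B}$, which is constant and still satisfies the ellipticity condition \eqref{e1.5} with the same constant $\mu_0$. Let $v\in W^{1,2}(B)$ be the weak solution of the homogeneous frozen Dirichlet problem $\mathrm{div}(\bar{A}\nabla v)=0$ in $B$ with $u-v\in W^{1,2}_0(B)$, and set $w:=u-v$. Since $2B\subset\Omega$, any $\varphi\in W^{1,2}_0(B)$ extended by zero is an admissible test function for both \eqref{e1.6} and \eqref{e1.8}, so, for every $\boldsymbol{f}_0\in{\mathbb{R}^n}$, $w$ satisfies
\begin{equation*}
\int_B \bar{A}\nabla w\cdot\nabla\varphi\,dx
=\int_B(\bar{A}-A)\nabla u\cdot\nabla\varphi\,dx+\int_B(\boldsymbol{f}-\boldsymbol{f}_0)\cdot\nabla\varphi\,dx.
\end{equation*}
Testing against $\varphi=w$ and applying the ellipticity of $\bar{A}$, Cauchy--Schwarz, and Young's inequality yield the energy bound
\begin{equation*}
\fint_B|\nabla w|^2\,dx\lesssim\fint_B|A-\bar{A}|^2|\nabla u|^2\,dx+\fint_B|\boldsymbol{f}-\boldsymbol{f}_0|^2\,dx.
\end{equation*}

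Next I would apply Proposition~\ref{p3.1} to $v$ with vanishing right-hand side, giving $[\fint_{\theta B}|\nabla v-(\nabla v)_{\theta B}|^2]^{1/2}\le C\theta[\fint_B|\nabla v-(\nabla v)_B|^2]^{1/2}$. Writing $\nabla u=\nabla v+\nabla w$, the triangle inequality together with $\fint_{\theta B}|\nabla w|^2\le\theta^{-n}\fint_B|\nabla w|^2$ and the standard domination $\fint_B|\nabla u-(\nabla u)_B|^2\le2^n\fint_{2B}|\nabla u-(\nabla u)_{2B}|^2$ produces
\begin{align*}
&\left[\fint_{\theta B}|\nabla u-(\nabla u)_{\theta B}|^2\,dx\right]^{\frac12}\\
&\quad\le C\theta\left[\fint_{2B}|\nabla u-(\nabla u)_{2B}|^2\,dx\right]^{\frac12}
+C\theta^{-n/2}\left[\fint_B|\nabla w|^2\,dx\right]^{\frac12},
\end{align*}
which reduces the task to estimating the perturbation term $[\fint_B|A-\bar A|^2|\nabla u|^2]^{1/2}$.

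The crucial remaining step, and the main obstacle of the proof, is to show that this perturbation term is controlled by $\sigma(r)\fint_{2B}|\nabla u|+(\fint_{2B}|\boldsymbol{f}-\boldsymbol{f}_0|^q)^{1/q}$. The two key inputs are: first, the pointwise bound $|A-\bar{A}|\le2\mu_0^{-1}$ from \eqref{e1.5}, which combined with Assumption~\ref{a1.1}(a) and the doubling property (ii) of $\sigma$ yields $\fint_B|A-\bar A|^s\lesssim\sigma(r)$ for every $s\in[1,\infty)$; and second, the Gehring-type reverse H\"older inequality \eqref{e4.1} (for the Neumann case) or \eqref{e4.6} (for the Dirichlet case), which provides an exponent $q>2$ with
\begin{equation*}
\left(\fint_B|\nabla u|^q\,dx\right)^{2/q}\lesssim\left(\fint_{2B}|\nabla u|\,dx\right)^2
+\left(\fint_{2B}|\boldsymbol{f}-\boldsymbol{f}_0|^q\,dx\right)^{2/q}.
\end{equation*}
A careful H\"older interpolation using the conjugate exponents $q/(q-2)$ and $q/2$ combines these inputs: the $L^{q/(q-2)}$ norm of $|A-\bar A|^2$ is handled via the pointwise bound plus the BMO-type smallness of $A$, while the $L^{q/2}$ norm of $|\nabla u|^2$ is upgraded by the Gehring improvement to a low-integrability norm of $\nabla u$ on $2B$ plus an $L^q$ norm of $\boldsymbol{f}-\boldsymbol{f}_0$. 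The delicate calibration is precisely to extract the linear power of $\sigma(r)$ claimed in \eqref{e4.7}, which is exactly what the Gehring self-improvement (that allows us to trade integrability of $\nabla u$ for smallness in the oscillation of $A$) enables. Substituting the resulting bound on $\fint_B|\nabla w|^2$ into the decomposition above then yields \eqref{e4.7}.
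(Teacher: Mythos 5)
Your overall framework is correct and essentially mirrors the paper's: freeze the coefficients, compare $u$ with a solution $v$ of the homogeneous frozen-coefficient equation, use the constant-coefficient decay of Proposition~\ref{p3.1}, and control the perturbation term $\fint_B|A-\bar A|^2|\nabla u|^2$ by H\"older's inequality and the Gehring self-improvement of $\nabla u$ from Propositions~\ref{p4.1}/\ref{p4.2}. The paper organizes the first part slightly differently: it rewrites the equation as \eqref{e4.8}, a constant-coefficient equation with right-hand side $\mathrm{div}((A_0-A)\nabla u+\boldsymbol f)$, and applies Proposition~\ref{p3.1} to $u$ as a black box. Your explicit construction of $v$ and the energy estimate for $w=u-v$ simply reproduce what is hidden inside Proposition~\ref{p3.1} and Lemma~\ref{l3.1}, so that part is equivalent.

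However, there is a genuine gap in how you control the $A$-oscillation factor. You propose that the pointwise bound $|A-\bar A|\lesssim\mu_0^{-1}$ combined with Assumption~\ref{a1.1}(a) gives $\fint_B|A-\bar A|^s\lesssim\sigma(r)$. That inequality is true, but it is the wrong quantity to feed into the H\"older step: after raising to the power $1/s$, it only gives
\begin{equation*}
\left[\fint_B|A-\bar A|^{2(q/2)'}\,dx\right]^{\frac{1}{2(q/2)'}}\lesssim\sigma(r)^{\frac{1}{2(q/2)'}}=\sigma(r)^{\frac{q-2}{2q}},
\end{equation*}
and $\tfrac{q-2}{2q}<\tfrac12<1$, so you end up with $\sigma(r)^\alpha$ for some $\alpha<1/2$ in place of the linear $\sigma(r)$ asserted in \eqref{e4.7}. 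This loss is fatal downstream: the iteration in Theorem~\ref{t5.1} (and its interior analogue behind Proposition~\ref{p5.1}) absorbs the decay precisely because $\sigma(s)\ln(1/s)\to0$ by Assumption~\ref{a1.1}(a)(i), and $\sigma(s)^\alpha\ln(1/s)$ with $\alpha<1$ need not vanish; for instance, $\sigma(r)=[\ln(1/r)]^{-2}$ satisfies (i) yet $\sigma(r)^\alpha\ln(1/r)\to\infty$ for $\alpha<1/2$. What is actually needed at this point is the Campanato self-improvement of Lemma~\ref{l2.1}, i.e., $\mathcal{L}^{\sigma(\cdot)}(\Omega)=\mathcal{L}^{\sigma(\cdot),p}(\Omega)$ with equivalent seminorms for every $p\in[1,\infty)$, which yields the linear bound
\begin{equation*}
\left[\fint_B|A-\bar A|^{2(q/2)'}\,dx\right]^{\frac{1}{2(q/2)'}}\lesssim\sigma(r).
\end{equation*}
You attribute the ``delicate calibration'' to the Gehring improvement, but Gehring only upgrades the integrability of $\nabla u$; it cannot repair a sublinear power of $\sigma(r)$. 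The missing ingredient is Lemma~\ref{l2.1}, which the paper invokes explicitly in its proof of this proposition.
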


\begin{proof}
Let   
$A_0:=(A)_{B}$. Since $u$ is the weak
solution to the Neumann problem \eqref{e1.6} or the Dirichlet problem \eqref{e1.8}, it follows that
\begin{equation}\label{e4.8}
\mathrm{div}{\left(A_0\nabla u\right)}=\mathrm{div}{\left((A_0-A)\nabla u+\boldsymbol{f}\right)}\ \text{in}\ \Omega.
\end{equation}
Then, by \eqref{e4.8} and \eqref{e3.4}, there exist a positive constant $C$,
independent of $\theta,$ $B,$ $u$, and $\boldsymbol{f}$, and a positive constant $C_{(n,\mu_0,\theta)}$, depending only
on $n$, $\mu_0$, and $\theta$, such that, for any $\boldsymbol{f}_0\in{\mathbb{R}^n}$,
\begin{align}\label{e4.9}
\left[\fint_{\theta B}|\nabla{u}(y)-(\nabla u)_{\theta B}|^2\,dy\right]^{\frac{1}{2}}&\le C\theta\left[\fint_{B}|\nabla
{u}(y)-(\nabla u)_{B}|^2\,dy\right]^{\frac{1}{2}}\\ \notag
&\quad+C_{(n,\mu_0,\theta)}\left[\fint_{B}\left|A_0\nabla{u}(y)-A(y)\nabla{u}(y)\right|^2\,dy\right]^{\frac{1}{2}}\\ \notag
&\quad+C_{(n,\mu_0,\theta)}\left[\fint_{B}\left|\boldsymbol{f}(y)-\boldsymbol{f}_0\right|^2\,dy\right]^{\frac{1}{2}}.
\end{align}
Moreover, from \eqref{e4.1}, Assumption \ref{a1.1}$\mathrm{(a)}$, Lemma \ref{l2.1},
and H\"older's inequality, we deduce that there exist constants $q\in(2,\infty)$ and $C\in(0,\infty)$
such that
\begin{align*}
&\left[\fint_{B}\left|A_0\nabla{u}(y)-A(y)\nabla{u}(y)\right|^2\,dy\right]^{\frac{1}{2}}\\ \notag
&\quad\le \left[\fint_{B}\left|A_0-A(y)\right|^{2(\frac{q}{2})'}\,dy\right]^{\frac{1}{2(\frac{q}{2})'}}
\left[\fint_{B}|\nabla{u}(y)|^q\,dy\right]^{\frac{1}{q}}\\ \notag
&\quad\le C\sigma(r)\left\{\fint_{2B}|\nabla u(y)|\,dy+\left[\fint_{2B}\left|\boldsymbol{f}(y)
-\boldsymbol{f}_0\right|^q\,dy\right]^{\frac{1}{q}}\right\},
\end{align*}
which, together with \eqref{e4.9} and Assumption \ref{a1.1}$\mathrm{(a)}$, further implies that \eqref{e4.7} holds.
This finishes the proof of Proposition \ref{p4.3}.
\end{proof}

\subsection{Change of coordinates}\label{s4.2}

In this subsection, we recall some necessary results on the change of coordinates for Lipschitz
domains (see, for instance, \cite[Section 4.2]{bc22}, \cite[Section 5.1]{b05}, and \cite[p.\,50]{dk10}).

Let $n\ge2$ and $\Omega\subset{\mathbb{R}^n}$ be a bounded Lipschitz domain. By the definition of Lipschitz domains,
there exists a constant $R_1\in(0,\infty)$, depending only on $\Omega$, such that, for any $x\in\partial\Omega$
and $r\in(0,R_1]$, $B(x,r)\cap\partial\Omega$ is a part of some Lipschitz graph.
Without loss of generality, we may assume that the origin $\mathbf{0}\in\partial\Omega$ and there exists a Lipschitz map $\psi:\ \mathbb{R}^{n-1}\to\mathbb{R}$ such that
\begin{equation}\label{e4.10}
\partial\Omega\cap B(\mathbf{0},R_1)=\left\{(x',\psi(x'))\in{\mathbb{R}^n}:\ (x',0)\in B(\mathbf{0},R_1)\right\}
\end{equation}
and
\begin{equation*}
\Omega\cap B(\mathbf{0},R_1)=\left\{(x',x_n)\in B(\mathbf{0},R_1):\ x_n>\psi(x')\right\}.
\end{equation*}
Then the map $\Psi:\ \Omega\cap B(\mathbf{0},R_1)\to B^+(\mathbf{0},R_1)$ is defined by setting,
for any $x:=(x',x_n)\in\Omega\cap B(\mathbf{0},R_1)$,
\begin{equation*}
\Psi(x):=(x',x_n-\psi(x')).
\end{equation*}
Here and thereafter, $B^+(\mathbf{0},R_1):=\{y:=(y',y_n)\in B(\mathbf{0},R_1):\ y_n>0\}$.
It is easy to see that
$$\Psi\left(\partial\Omega\cap B(\mathbf{0},R_1)\right)\subset\left\{(y',y_n)\in{\mathbb{R}^n}:\ y_n=0\right\}$$
and $\Psi({\bf 0})={\bf 0}$. The function $\Psi:\ \overline{\Omega}\cap B(\mathbf{0},R_1)
\to\Psi(\overline{\Omega}\cap B(\mathbf{0},R_1))$ is invertible, with a Lipschitz
continuous inverse
$$\Psi^{-1}:\ \Psi\left(\overline{\Omega}\cap B(\mathbf{0},R_1)\right)\to\overline{\Omega}\cap B(\mathbf{0},R_1).$$
Furthermore, the map $\mathbf{J}:\ \overline{\Omega}\cap B(\mathbf{0},R_1)\to\mathbb{R}^{n\times n}$
is defined by setting, for any $x\in\overline{\Omega}\cap B(\mathbf{0},R_1)$,
\begin{equation}\label{e4.11}
\mathbf{J}(x):=\nabla\Psi(x).
\end{equation}
Thus, for any $x:=(x',x_n)\in\overline{\Omega}\cap B(\mathbf{0},R_1)$,
\begin{equation*}
\mathbf{J}(x)=
\begin{pmatrix}\mathbf{I}_{n-1}
  & 0\\-\nabla\psi(x')
  &1
\end{pmatrix}=\begin{pmatrix}
  1&  0&  0& \cdots &0 \\
  0& 1& 0& \cdots  &  0\\
  \vdots &\vdots  &\ddots &\ddots& \vdots\\
  0 &0&\cdots  & 1 & 0\\
  -\psi_{x_1}(x')&-\psi_{x_2}(x')& \cdots & -\psi_{x_{n-1}}(x') &1
\end{pmatrix}.
\end{equation*}
Here and thereafter, $\mathbf{I}_{n-1}$ denotes the $(n-1)\times(n-1)$ unit matrix. Next we define the map
$$\mathbf{J}^{-1}:\ \Psi\left(\overline{\Omega}\cap B(\mathbf{0},R_1)\right)\to\mathbb{R}^{n\times n}$$
by setting, for any $y\in\Psi(\overline{\Omega}\cap B(\mathbf{0},R_1))$,
$$\mathbf{J}^{-1}(y):=\nabla(\Psi^{-1}(y)).$$
Therefore, for any $y\in\Psi(\overline{\Omega}\cap B(\mathbf{0},R_1))$,
$\mathbf{J}^{-1}(y)=(\nabla\Psi)^{-1}(\Psi^{-1}(y)).$
\begin{remark}\label{r4.1}
By the definitions of $\mathbf{J}$ and $\mathbf{J}^{-1}$, we have
\begin{enumerate}
\item[\rm(i)] For any $y\in\Psi(\overline{\Omega}\cap B(\mathbf{0},R_1))$,
$\mathbf{J}^{-1}(y)\mathbf{J}(\Psi^{-1}(y))=\mathbf{I}_n$, where $\mathbf{I}_n$ denotes
the $n\times n$ unit matrix.
\item[\rm(ii)] For any $x\in\overline{\Omega}\cap B(\mathbf{0},R_1)$, $\det{\mathbf{J}(x)}=1$ and,
for any $y\in\Psi(\overline{\Omega}\cap B(\mathbf{0},R_1))$, $\det{\mathbf{J}^{-1}(y)}=1$.
\item[\rm(iii)] $|\Psi(E)|=|E|$ for any measurable set $E\subset\overline{\Omega}\cap B(\mathbf{0},R_1)$,
and $|\Psi^{-1}(E)|=|E|$ for any measurable set $E\subset\Psi(\overline{\Omega}\cap B(\mathbf{0},R_1))$.
\end{enumerate}
\end{remark}

Furthermore, since both $\Psi$ and $\Psi^{-1}$ are Lipschitz continuous, it follows that there
exist constants
\begin{equation}\label{e4.12}
0<\lambda\le 1\le\Lambda
\end{equation}
such that, for any $r\in(0,R_1]$,
\begin{equation}\label{e4.13}
B^+(\mathbf{0},\lambda r)\subset\Psi(\Omega\cap B(\mathbf{0},r))\subset B^+(\mathbf{0},\Lambda r)
\end{equation}
and, for any $r\in(0,\infty)$ satisfying $B^+(\mathbf{0},r)\subset\Psi(\overline{\Omega}\cap B(\mathbf{0},R_1))$,
\begin{equation}\label{e4.14}
\Omega\cap B\left(\mathbf{0},\Lambda^{-1}r\right)\subset\Psi^{-1}(B(\mathbf{0},r))\subset\Omega\cap
B\left(\mathbf{0},\lambda^{-1}r\right).
\end{equation}

For any given (vector-valued or matrix-valued) function $f$ on $\overline{\Omega}\cap B(\mathbf{0},R_1)$, define the function
$\widetilde{f}$ on $\Psi(\overline{\Omega}\cap B(\mathbf{0},R_1))$ by setting, for any
$y\in\Psi(\overline{\Omega}\cap B(\mathbf{0},R_1))$,
\begin{equation}\label{e4.15}
\widetilde{f}(y):=f\left(\Psi^{-1}(y)\right).
\end{equation}
If $f$ is differentiable, then, for any $y\in\Psi(\overline{\Omega}\cap B(\mathbf{0},R_1))$,
$$\nabla_{y}\widetilde{f}(y)=\nabla_{x}f\left(\Psi^{-1}(y)\right)\mathbf{J}^{-1}(y)$$
and, for any $x\in\overline{\Omega}\cap B(\mathbf{0},R_1)$,
$$\nabla_{x}f(x)=\nabla_{y}\widetilde{f}\left(\Psi(x)\right)\mathbf{J}(x),$$
where $\nabla_x$ and $\nabla_y$, respectively, denote the gradient with respect to the variables $x$
and $y$. By the boundedness of $\mathbf{J}$, we conclude that, if $y=\Psi(x)$, then
$$|\nabla_xf(x)|\sim\left|\nabla_y\widetilde{f}(y)\right|$$
with the positive equivalence constants depending only on the Lipschitz constants of $\Psi$ and $\Psi^{-1}$.

Furthermore, recall that the function $u$ is called a \emph{weak solution} to the Neumann problem
\begin{equation}\label{e4.16}
\left\{\begin{array}{ll}
\mathrm{div}(A\nabla u)=\mathrm{div} \boldsymbol{f}\ \ &\text{in}\ \Omega\cap B(\mathbf{0},R_1),\\
\dfrac{\partial u}{\partial\boldsymbol{\nu}}=\boldsymbol{f}\cdot\boldsymbol{\nu}\  &\text{on}\
\partial\Omega\cap B(\mathbf{0},R_1)
\end{array}\right.
\end{equation}
if
$$\int_{\Omega\cap B(\mathbf{0},R_1)}A(x)\nabla u(x)\cdot\nabla\varphi(x)\,dx=\int_{\Omega\cap B(\mathbf{0},R_1)}
\boldsymbol{f}(x)\cdot\nabla\varphi(x)\,dx
$$
holds for any $\varphi\in C^\infty(\Omega\cap B(\mathbf{0},R_1))$ with $\varphi=0$ on $\Omega\cap\partial B(\mathbf{0},R_1)$.

Now, we show that, if $u$ is a weak solution of \eqref{e4.16}, then the function $\widetilde{u}$,
defined via replacing $f$ by $u$ in \eqref{e4.15}, is a weak solution of
a similar Neumann problem. Indeed, since $\det{\mathbf{J}^{-1}}=1$, it follows that,
for any $\varphi\in C^{\infty}(\Omega\cap B(\mathbf{0},R_1))$ with $\varphi=0$ on $\Omega\cap\partial B(\mathbf{0},R_1)$,
\begin{align}\label{e4.17}
&\int_{\Omega\cap B(\mathbf{0},R_1)}A(x)\nabla_xu(x)\cdot\nabla_x\varphi(x)\,dx\\ \nonumber
&\quad=\int_{\Psi(\Omega\cap B(\mathbf{0},R_1))}A\left(\Psi^{-1}(y)\right)\nabla_xu\left(\Psi^{-1}(y)\right)
\cdot\nabla_x\varphi\left(\Psi^{-1}(y)\right)\det{\mathbf{J}^{-1}(y)}\,dy\\ \nonumber
&\quad=\int_{\Psi(\Omega\cap B(\mathbf{0},R_1))}A\left(\Psi^{-1}(y)\right)\nabla_y\widetilde{u}(y)\mathbf{J}\left(\Psi^{-1}(y)\right)\cdot
\nabla_y\widetilde{\varphi}(y)\mathbf{J}\left(\Psi^{-1}(y)\right)\,dy\\ \nonumber
&\quad=\int_{\Psi(\Omega\cap B(\mathbf{0},R_1))}\mathbf{J}\left(\Psi^{-1}(y)\right)A\left(\Psi^{-1}(y)\right)
\mathbf{J}^{t}\left(\Psi^{-1}(y)\right)\nabla_y\widetilde{u}(y)
\cdot\nabla_y\widetilde{\varphi}(y)\,dy,
\end{align}
where $\widetilde{\varphi}$ is defined by replacing $f$
with $\varphi$ in \eqref{e4.15}. Similarly to \eqref{e4.17}, we also obtain
$$\int_{\Omega\cap B(\mathbf{0},R_1)} \boldsymbol{f}(x)\cdot\nabla_x\varphi(x)\,dx
=\int_{\Psi(\Omega\cap B(\mathbf{0},R_1))}\widetilde{\boldsymbol{f}}(y)\mathbf{J}^{t}\left(\Psi^{-1}(y)\right)
\cdot\nabla_y\widetilde{\varphi}(y)\,dy,$$
where $\widetilde{\boldsymbol{f}}$ is defined by replacing $f$ with $\boldsymbol{f}$
in \eqref{e4.15}, which, combined with \eqref{e4.17}, further implies that
\begin{align}\label{e4.18}
&\int_{\Psi(\Omega\cap B(\mathbf{0},R_1))}\mathbf{J}\left(\Psi^{-1}(y)\right)A\left(\Psi^{-1}(y)\right)
\mathbf{J}^{t}\left(\Psi^{-1}(y)\right)\nabla_y\widetilde{u}(y)\cdot\nabla_y\widetilde{\varphi}(y)\,dy\\ \nonumber
&\quad=\int_{\Psi(\Omega\cap B(\mathbf{0},R_1))}\widetilde{\boldsymbol{f}}(y)\mathbf{J}^{t}\left(\Psi^{-1}(y)\right)
\cdot\nabla_y\widetilde{\varphi}(y)\,dy.
\end{align}
Therefore, by \eqref{e4.18}, we have the following lemma.

\begin{lemma}\label{l4.1}
Let $R_1\in(0,\infty)$ be the same as in \eqref{e4.10} and let $u$ be a weak solution of \eqref{e4.16}.
Then $\widetilde{u}$ is a weak solution of the Neumann problem
\begin{equation*}
\left\{\begin{array}{ll}
\mathrm{div}_y\left(A_{\widetilde{\mathbf{J}}}\nabla_y\widetilde{u}\right)=\mathrm{div}\left(\widetilde{\boldsymbol{f}}\widetilde{\mathbf{J}^t}\right)\ \
&\text{in}\ B^+({\bf 0},{\lambda R_1}),\\
\dfrac{\partial \widetilde{u}}{\partial\boldsymbol{\nu}}=\widetilde{\boldsymbol{f}}\widetilde{\mathbf{J}^t}
\cdot\boldsymbol{\nu}\  &\text{on}\  \left\{y_n=0\right\}\cap B({\bf 0},{\lambda R_1}),
\end{array}\right.
\end{equation*}
where, for any $y\in B^+({\bf 0},{\lambda R_1})$, $A_{\widetilde{\mathbf{J}}}(y):
=\mathbf{J}(\Psi^{-1}(y))A(\Psi^{-1}(y))\mathbf{J}^{t}(\Psi^{-1}(y))$
and $\widetilde{\mathbf{J}^t}(y):=\mathbf{J}^t(\Psi^{-1}(y))$.
\end{lemma}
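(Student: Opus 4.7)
The lemma is essentially a restatement of the change-of-variables identity \eqref{e4.18} in the language of weak formulations on the flattened half-ball, so my plan is to fill in the remaining bookkeeping that links the two formulations.

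First, I would revisit the derivation of \eqref{e4.17}--\eqref{e4.18} starting from the weak formulation of \eqref{e4.16} against an arbitrary test function $\varphi\in C^\infty(\Omega\cap B(\mathbf{0},R_1))$ vanishing on $\Omega\cap\partial B(\mathbf{0},R_1)$, performing the substitution $y=\Psi(x)$ in both integrals. The unit Jacobian $\det\mathbf{J}^{-1}=1$ from Remark \ref{r4.1}(ii) eliminates any determinant factor, while the chain-rule identities $\nabla_x u(\Psi^{-1}(y))=\nabla_y\widetilde{u}(y)\,\mathbf{J}(\Psi^{-1}(y))$ and $\nabla_x\varphi(\Psi^{-1}(y))=\nabla_y\widetilde{\varphi}(y)\,\mathbf{J}(\Psi^{-1}(y))$ deposit one copy of $\mathbf{J}$ on each factor in the left-hand bilinear form, producing the symmetric conjugation $A_{\widetilde{\mathbf{J}}}=\mathbf{J}A\mathbf{J}^{t}$ (pulled back to $\Psi^{-1}(y)$). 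On the right-hand side the same mechanism produces a single factor $\mathbf{J}^{t}$, giving $\widetilde{\boldsymbol{f}}\,\widetilde{\mathbf{J}^{t}}$. This is exactly the content of \eqref{e4.18}.

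Second, I would read \eqref{e4.18} backwards to recognize it as the weak formulation of the claimed Neumann problem on the half-ball $B^{+}(\mathbf{0},\lambda R_1)$, which is contained in $\Psi(\Omega\cap B(\mathbf{0},R_1))$ by \eqref{e4.13}. Given any test function $\widetilde{\varphi}\in C^\infty(B^{+}(\mathbf{0},\lambda R_1))$ that vanishes on the curved portion $\partial B(\mathbf{0},\lambda R_1)\cap\overline{\{y_n>0\}}$, I would pick $\varepsilon>0$ with $\mathrm{supp}(\widetilde{\varphi})\subset\overline{B^{+}(\mathbf{0},\lambda R_1-\varepsilon)}$, extend $\widetilde{\varphi}$ by zero to $\Psi(\Omega\cap B(\mathbf{0},R_1))$, and define $\varphi:=\widetilde{\varphi}\circ\Psi$ on $\Omega\cap B(\mathbf{0},R_1)$. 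Applying \eqref{e4.14} to the slightly shrunken ball gives $\mathrm{supp}(\varphi)\subset\Omega\cap B(\mathbf{0},R_1-\varepsilon/\lambda)$, so $\varphi$ is admissible in the weak formulation of \eqref{e4.16}. Inserting this pair into \eqref{e4.18} then yields precisely the variational identity defining the Neumann problem for $\widetilde{u}$ on $B^{+}(\mathbf{0},\lambda R_1)$; because $\widetilde{\varphi}$ is not required to vanish on the flat face $\{y_n=0\}\cap B(\mathbf{0},\lambda R_1)$, the transformed Neumann condition $\widetilde{\boldsymbol{f}}\,\widetilde{\mathbf{J}^{t}}\cdot\boldsymbol{\nu}$ is encoded implicitly there.

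The only mildly delicate step is the second one, namely verifying that the class of $\widetilde{\varphi}$'s appearing as pullbacks of admissible $\varphi$'s exhausts the natural test function class for the transformed Neumann problem. This follows from the bi-Lipschitz nature of $\Psi$, which preserves smoothness on the pullback, combined with the two-sided inclusions \eqref{e4.13}--\eqref{e4.14} that ensure support-localization in both directions. I do not foresee any genuine obstacle here; once the identity \eqref{e4.18} is established, the remainder of the proof is a routine reinterpretation, and no ellipticity or regularity hypotheses on $A$ or $\psi$ beyond the Lipschitz assumption on $\partial\Omega$ are needed at this stage.
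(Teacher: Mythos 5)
Your proof follows the paper's own route exactly: the paper derives \eqref{e4.17}--\eqref{e4.18} by the same substitution $y=\Psi(x)$, unit Jacobian from Remark \ref{r4.1}(ii), and chain rule, and then states Lemma \ref{l4.1} as an immediate reading of \eqref{e4.18}; your second step simply spells out the test-function correspondence that the paper leaves implicit. One correction: bi-Lipschitz $\Psi$ does \emph{not} "preserve smoothness on the pullback" — since $\psi$ is merely Lipschitz, $\varphi:=\widetilde{\varphi}\circ\Psi$ is in general only Lipschitz, not $C^\infty$, so the final step requires the routine observation that the weak formulation \eqref{e4.16} extends by density from $C^\infty$ to compactly supported Lipschitz test functions (the bilinear form is continuous in $\nabla\varphi$ in $L^2$). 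With that small repair the argument is complete and matches the paper.
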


For any given $s\in(0,\lambda R_1)$, define the matrix $\mathbf{J}_s\in\mathbb{R}^{n\times n}$ by
setting
\begin{equation}\label{e4.19}
\mathbf{J}_s:=(\mathbf{J})_{\Omega\cap B({\bf0},s)}.
\end{equation}
Based on \eqref{e4.13} and \eqref{e4.14}, we choose the constant $\Lambda$ in \eqref{e4.12} large enough
such that $\mathbf{J}_sB({\bf0},s)\subset B({\bf0},\Lambda R_1)$ for any given $s\in(0,\lambda R_1)$.
For any constant-valued matrix ${\bf T}\in\mathbb{R}^{n\times n}$, let
$$H_{\bf T}:=\{x\in{\mathbb{R}^n}:\ ({\bf T}x)_{n}>0\}.$$
For any given function $u$ on $\Omega$, define $\overline{{u}}:\ H_{{\bf J}_s}\cap
B({\bf0},\frac{\lambda}{\Lambda}R_1)\to\mathbb{R}$ by setting, for any
$z\in H_{\mathbf{J}_s}\cap B({\bf0},\frac{\lambda}{\Lambda}R_1),$
\begin{equation}\label{e4.20}
\overline{{u}}(z):=\widetilde{u}\left(\mathbf{J}_sz\right).
\end{equation}
Therefore, for any $z\in H_{\mathbf{J}_s}\cap B({\bf0},\frac{\lambda}{\Lambda}R_1)$,
$$\nabla_z\overline{{u}}(z)=\nabla_y\widetilde{u}\left({\bf J}_sz\right){\bf J}_s.$$
By the definitions of $\mathbf{J}$ and $\mathbf{J}_s$, for any given
$z:=(z',z_n)\in{\mathbb{R}^n}$,
$$(\mathbf{J}_sz)_n\ge 0\ \text{if and only if}\ z_n\ge(\nabla\psi)_{\Omega\cap B({\bf0},s)}\cdot z'.$$
Furthermore, for any vector-valued (or matrix-valued) function $\boldsymbol{f}$ on $\Omega\cap B({\bf0}, R_1)$,
define the function $\overline{{\boldsymbol{f}}}:\ H_{\mathbf{J}_s}\cap B({\bf0},{\frac{\lambda}{\Lambda}R_1})
\to\mathbb{R}^n$ by setting, for any $z\in H_{\mathbf{J}_s}\cap B({\bf0},{\frac{\lambda}{\Lambda}R_1})$,
\begin{equation}\label{e4.21}
\overline{{\boldsymbol{f}}}(z):=\widetilde{\boldsymbol{f}}\left(\mathbf{J}_sz\right).
\end{equation}
Then, as in Lemma \ref{l4.1}, we obtain the following result.

\begin{lemma}\label{l4.2}
Let $R_1\in(0,\infty)$ be the same as in \eqref{e4.10} and $u$ be a weak solution of \eqref{e4.16}.
Then $\overline{u}$, defined as in \eqref{e4.20}, is a weak solution of the Neumann problem
\begin{equation}\label{e4.22}
\left\{\begin{array}{ll}
\mathrm{div}_z\left(A_{{\bf J}_s^{-1}\overline{{\bf J}}}\nabla_z\overline{{u}}\right)
=\mathrm{div}\left(\overline{{\boldsymbol{f}}}\,\overline{{\bf J}^t}\left({\bf J}_s^{-1}\right)^t\right)\ \
&\text{in}\ H_{\mathbf{J}_s}\cap B\left({\bf0},{\frac{\lambda}{\Lambda}R_1}\right),\\
\dfrac{\partial \overline{{u}}}{\partial\boldsymbol{\nu}}=\overline{{\boldsymbol{f}}}
\,\overline{{\bf J}^t}\left({\bf J}_s^{-1}\right)^t\cdot\boldsymbol{\nu}\
&\text{on}\  \left\{(\mathbf{J}_sz)_n=0\right\}\cap B\left({\bf0},{\frac{\lambda}{\Lambda}R_1}\right),
\end{array}\right.
\end{equation}
where, for any $z\in H_{\mathbf{J}_s}\cap B({\bf0},{\frac{\lambda}{\Lambda}R_1})$,
$\overline{{\bf J}}(z)$ is defined as in \eqref{e4.21} with ${\bf J}$ in place of $\boldsymbol{f}$ and $A_{{\bf J}_s^{-1}\overline{{\bf J}}}(z):
={\bf J}_s^{-1}\overline{{\bf J}}(z)\overline{{A}}(z)\overline{{\bf J}^t}(z)
({\bf J}_s^{-1})^t$.
\end{lemma}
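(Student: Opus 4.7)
The plan is to derive Lemma~\ref{l4.2} from Lemma~\ref{l4.1} by applying the linear change of variables $y=\mathbf{J}_s z$ to the Neumann problem that $\widetilde{u}$ already satisfies on $B^+(\mathbf{0},\lambda R_1)$. Two preliminary observations are essential. First, since every $\mathbf{J}(x)$ from \eqref{e4.11} is lower triangular with ones on the diagonal, the average $\mathbf{J}_s$ inherits the same structure, so $\det\mathbf{J}_s=1$ and $\mathbf{J}_s$ is invertible with operator-norm bounds controlled by the Lipschitz constant of $\psi$; in particular, the map $z\mapsto\mathbf{J}_s z$ is volume-preserving. Second, by the choice of $\Lambda$ in \eqref{e4.12}, one has $\|\mathbf{J}_s\|\le\Lambda$, which gives $\mathbf{J}_s B(\mathbf{0},\frac{\lambda}{\Lambda}R_1)\subset B(\mathbf{0},\lambda R_1)$, and hence
\[
H_{\mathbf{J}_s}\cap B\!\left(\mathbf{0},\tfrac{\lambda}{\Lambda}R_1\right)\subset \mathbf{J}_s^{-1}B^+(\mathbf{0},\lambda R_1).
\]
This inclusion is what guarantees that test functions on the $z$-side pull back legitimately to the $y$-side.

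Next, given any admissible test function $\overline{\varphi}\in C^\infty(H_{\mathbf{J}_s}\cap B(\mathbf{0},\frac{\lambda}{\Lambda}R_1))$ vanishing on the curved portion of the boundary, I would set $\widetilde{\varphi}(y):=\overline{\varphi}(\mathbf{J}_s^{-1}y)$ (extended by zero outside $\mathbf{J}_s\,\mathrm{supp}\,\overline{\varphi}$), which is then an admissible test function for the weak formulation provided by Lemma~\ref{l4.1}. Applying the chain rule under the row-vector convention used in \eqref{e4.17}--\eqref{e4.18}, one has $\nabla_y\widetilde{u}(y)=\nabla_z\overline{u}(z)\,\mathbf{J}_s^{-1}$ and analogously for $\widetilde{\varphi}$. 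Substituting into the weak formulation from Lemma~\ref{l4.1}, using the identities $\overline{A_{\widetilde{\mathbf{J}}}}(z)=\overline{\mathbf{J}}(z)\overline{A}(z)\overline{\mathbf{J}^t}(z)$ and $\overline{\widetilde{\mathbf{J}^t}}(z)=\overline{\mathbf{J}^t}(z)$, and noting that $|\det\mathbf{J}_s|=1$ absorbs the Jacobian factor, the weak formulation transforms into
\[
\int A_{\mathbf{J}_s^{-1}\overline{\mathbf{J}}}(z)\,\nabla_z\overline{u}(z)\cdot\nabla_z\overline{\varphi}(z)\,dz=\int \overline{\boldsymbol{f}}(z)\,\overline{\mathbf{J}^t}(z)(\mathbf{J}_s^{-1})^t\cdot\nabla_z\overline{\varphi}(z)\,dz,
\]
which is precisely the weak formulation of \eqref{e4.22}. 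The Neumann boundary $\{y_n=0\}\cap B(\mathbf{0},\lambda R_1)$ pulls back to $\{(\mathbf{J}_s z)_n=0\}\cap B(\mathbf{0},\frac{\lambda}{\Lambda}R_1)$, giving the boundary condition stated in \eqref{e4.22}.

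The main obstacle is essentially bookkeeping: one must carefully track the positions of the transposes produced by the row-versus-column gradient convention, and verify the bijective correspondence between admissible test functions on the two sides. Since $\mathbf{J}_s$ is a constant (in $z$) matrix with unit determinant and uniformly bounded inverse, no new analytic difficulty arises beyond those already resolved in Lemma~\ref{l4.1}; the argument reduces to a direct linear-algebra computation entirely parallel to that used in the proof of Lemma~\ref{l4.1}.
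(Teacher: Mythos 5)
Your proof is correct and matches the approach the paper intends: the paper does not prove Lemma~\ref{l4.2} explicitly but simply remarks ``as in Lemma~\ref{l4.1},'' and your factoring through Lemma~\ref{l4.1} by the further linear substitution $y=\mathbf{J}_sz$, using $\det\mathbf{J}_s=1$, the lower-triangular/unit-diagonal structure, and the chain rule in the paper's row-gradient convention, is exactly the intended computation. The bookkeeping you flag (tracking transposes, and the inclusion $\mathbf{J}_s(H_{\mathbf{J}_s}\cap B(\mathbf{0},\tfrac{\lambda}{\Lambda}R_1))\subset B^+(\mathbf{0},\lambda R_1)$ so that the pulled-back test functions are admissible for Lemma~\ref{l4.1}) is handled correctly, and the identities $\overline{A_{\widetilde{\mathbf{J}}}}=\overline{\mathbf{J}}\,\overline{A}\,\overline{\mathbf{J}^t}$ and $\mathbf{J}_s^{-1}\overline{\mathbf{J}}\,\overline{A}\,\overline{\mathbf{J}^t}(\mathbf{J}_s^{-1})^t=A_{\mathbf{J}_s^{-1}\overline{\mathbf{J}}}$ are exactly what produce \eqref{e4.22}.
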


Let $A_0\in\mathbb{R}^{n\times n}$ be a constant-valued matrix satisfying the uniform ellipticity condition
\eqref{e1.5}. Assume that $s\in(0,R_1)$, $\boldsymbol{F}\in L^2(B({\bf0},R_1);{\mathbb{R}^n})$,
and $w$ is a weak solution of the Neumann problem
\begin{equation}\label{e4.23}
\left\{\begin{array}{ll}
\mathrm{div}_z\left(A_0\nabla_zw\right)=\mathrm{div}\left(\boldsymbol{F}\right)\ \ &\text{in}\ H_{\mathbf{J}_s}
\cap B\left({\bf0},{\frac{\lambda}{\Lambda}R_1}\right),\\
\dfrac{\partial w}{\partial\boldsymbol{\nu}}=\boldsymbol{F}\cdot\boldsymbol{\nu}\  &\text{on}\
\left\{(\mathbf{J}_sz)_n=0\right\}\cap B\left({\bf0},{\frac{\lambda}{\Lambda}R_1}\right).
\end{array}\right.
\end{equation}
Then, by an argument similar to that used in the proof of \cite[Proposition 3.2]{dlk20} and a change of variables,
we obtain the following lemma for the Neumann problem \eqref{e4.23}; we omit its proof.

\begin{lemma}\label{l4.3}
Let $R_1\in(0,\infty)$ be the same as in \eqref{e4.10} and $w$ be a weak solution of \eqref{e4.23}.
Then, for any given $s\in(0,\lambda R_1)$, any $r\in(0,\lambda R_1/\Lambda)$, $\delta\in(0,1)$, and $\boldsymbol{F}_0\in{\mathbb{R}^n}$,
\begin{align*}
&\left[\fint_{ B({\bf0},\delta r)\cap H_{{\bf J}_s}}\left|\nabla w(z)-(\nabla w)_{D_{\delta r}}\right|^2\,dz\right]^{\frac{1}{2}}\\ \nonumber
&\quad\le C\delta\left[\fint_{B({\bf0},r)\cap H_{{\bf J}_s}}\left|\nabla w(z)-(\nabla w)_{D_{r}}\right|^2\,dz\right]^{\frac{1}{2}}
+C_{(n,\mu_0,\delta)}\left[\fint_{B({\bf0},r)\cap H_{{\bf J}_s}}\left|\boldsymbol{F}(z)-\boldsymbol{F}_0\right|^2\,dz\right]^{\frac{1}{2}},
\end{align*}
where $D_r:=B({\bf0},r)\cap H_{{\bf J}_s}$, $(\nabla w)_{D_{r}}:=((w_{z_1})_{D_r},\ldots,(w_{z_{n}})_{D_r})$, $C$ is
a positive constant independent of $\delta,$ $s,$ $r,$ $w$, and $\boldsymbol{F}$, and $C_{(n,\mu_0,\delta)}$
is a positive constant depending only on $\delta$, $\mu_0$, and $n$.
\end{lemma}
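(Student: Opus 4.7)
The proof plan follows the Campanato-type \emph{freeze-and-compare} scheme used in Proposition \ref{p3.1}, now adapted to the boundary of the tilted half-space $H_{\mathbf{J}_s}$. The ingredients are: a reduction that removes $\boldsymbol{F}_0$; a rotation that flattens $H_{\mathbf{J}_s}$ to the standard upper half-space; a decomposition of $w$ into a homogeneous part $v$ plus an error controlled by the source; and a boundary mean-oscillation decay estimate for $v$ obtained via reflection.

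For the reduction, observe that $z\mapsto(A_0^{-1}\boldsymbol{F}_0)\cdot z$ is itself a weak solution of \eqref{e4.23} with $\boldsymbol{F}$ replaced by the constant $\boldsymbol{F}_0$, so subtracting it from $w$ turns the source into $\boldsymbol{F}-\boldsymbol{F}_0$ and changes $\nabla w$ only by an additive constant; hence the mean-oscillation quantities are unaffected and we may assume $\boldsymbol{F}_0=\mathbf{0}$. Next, choose an orthogonal matrix $T\in O(n)$ sending the half-space $H_{\mathbf{J}_s}$ onto $\mathbb{R}^n_+:=\{y_n>0\}$, and set $\widehat w(y):=w(T^{-1}y)$, $\widehat{\boldsymbol{F}}(y):=T\boldsymbol{F}(T^{-1}y)$, and $\widehat A_0:=TA_0T^t$. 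Since $T$ is orthogonal, $|\nabla\widehat w(y)|=|\nabla w(T^{-1}y)|$ and $|\widehat{\boldsymbol{F}}(y)|=|\boldsymbol{F}(T^{-1}y)|$, balls centered at the origin are preserved, and the transformed problem is a Neumann problem on $B^+(\mathbf{0},r):=B(\mathbf{0},r)\cap\mathbb{R}^n_+$ with coefficient matrix $\widehat A_0$ still satisfying \eqref{e1.5}.

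Now let $v\in W^{1,2}(B^+(\mathbf{0},r))$ solve $\mathrm{div}(\widehat A_0\nabla v)=0$ in $B^+(\mathbf{0},r)$ with $\widehat A_0\nabla v\cdot e_n=0$ on the flat face $B(\mathbf{0},r)\cap\{y_n=0\}$ and $v=\widehat w$ on the curved part of $\partial B^+(\mathbf{0},r)$. Testing the equation for $\widehat w-v$ against itself and using \eqref{e1.5} as in Lemma \ref{l3.1} yields
\begin{equation*}
\fint_{B^+(\mathbf{0},r)}|\nabla\widehat w-\nabla v|^2\,dy\le C\fint_{B^+(\mathbf{0},r)}|\widehat{\boldsymbol{F}}|^2\,dy.
\end{equation*}
For $v$, an affine change of the $y_n$-variable $y_n\mapsto y_n-\ell\cdot y'$ with $\ell\in\mathbb{R}^{n-1}$ chosen so that the transformed matrix has no off-diagonal entries in the last column below the $(n,n)$-entry turns the conormal condition into the pure Neumann condition $\partial_{y_n}v=0$. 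Even reflection of $v$ across $\{y_n=0\}$ then extends it to a solution of a constant-coefficient elliptic equation on a full ball, so interior regularity delivers the standard decay
\begin{equation*}
\left[\fint_{B^+(\mathbf{0},\delta r)}\left|\nabla v-(\nabla v)_{B^+(\mathbf{0},\delta r)}\right|^2\,dy\right]^{1/2}\le C\delta\left[\fint_{B^+(\mathbf{0},r)}\left|\nabla v-(\nabla v)_{B^+(\mathbf{0},r)}\right|^2\,dy\right]^{1/2}
\end{equation*}
with $C=C(n,\mu_0)$. Combining these two estimates by the triangle inequality, replacing $(\nabla v)_{B^+(\mathbf{0},r)}$ by $(\nabla\widehat w)_{B^+(\mathbf{0},r)}$ via another triangle inequality applied with the energy bound, and absorbing the volume factor $(|B^+(\mathbf{0},r)|/|B^+(\mathbf{0},\delta r)|)^{1/2}\sim\delta^{-n/2}$ into $C_{(n,\mu_0,\delta)}$, then undoing the orthogonal change of variables, gives the claimed inequality.

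The main technical obstacle is the boundary decay for $v$: the conormal condition $\widehat A_0\nabla v\cdot e_n=0$ does not admit a direct even reflection unless $\widehat A_0$ is block-diagonal in the splitting $\mathbb{R}^{n-1}\oplus\mathbb{R}e_n$. The preliminary shear $y_n\mapsto y_n-\ell\cdot y'$ above is the standard remedy; it is a volume-preserving linear bijection with ellipticity constants controlled by $\mu_0$, preserves the flatness of the face $\{y_n=0\}$, and introduces only constants depending on $n$ and $\mu_0$.
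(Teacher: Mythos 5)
Your overall strategy --- reduce to $\boldsymbol{F}_0=\mathbf{0}$ by subtracting the affine solution $z\mapsto (A_0^{-1}\boldsymbol{F}_0)\cdot z$, rotate $H_{\mathbf{J}_s}$ to $\mathbb{R}^n_+$ by an orthogonal $T$, compare $\widehat w$ with the solution $v$ of the homogeneous conormal problem via the energy argument of Lemma \ref{l3.1}, and establish linear decay of mean oscillation for $v$ by a flattening shear and reflection --- is the right one and agrees in spirit with what the paper defers to (the proof of \cite[Proposition 3.2]{dlk20} together with a change of variables). The reduction, the rotation, and the energy comparison are all correct. However, the last step has two genuine gaps.

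First, the shear you write, $y_n\mapsto y_n-\ell\cdot y'$, does \emph{not} fix the half-space $\{y_n>0\}$: it maps $\{y_n=0\}$ onto the tilted hyperplane $\{\tilde y_n=-\ell\cdot\tilde y'\}$, so the claim that it ``preserves the flatness of the face $\{y_n=0\}$'' fails in the sense required. The change of variables $T$ that fixes $\{y_n>0\}$ and turns the conormal condition into $\partial_n\tilde v=0$ is the tangential shear $\tilde y'=y'+y_n t$, $\tilde y_n=y_n$, with $t_j=-(\widehat A_0)_{nj}/(\widehat A_0)_{nn}$; then $\widetilde A_0:=T\widehat A_0 T^t$ has $(\widetilde A_0)_{nj}=0$ for $j<n$. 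Second, and more substantively, even reflection produces a constant-coefficient equation on the full ball only when $\widetilde A_0$ is block-diagonal in $\mathbb{R}^{n-1}\oplus\mathbb{R}e_n$. With the above shear one finds $(\widetilde A_0)_{jn}=(\widehat A_0)_{jn}-(\widehat A_0)_{nj}$ for $j<n$, which is nonzero whenever $A_0$ is not symmetric; and \eqref{e1.5} does not impose symmetry, so $A_0=(A)_{\Omega\cap B_s}$ in \eqref{e4.27} need not be symmetric. In that case the even extension $\check v(y',y_n):=\tilde v(y',|y_n|)$ is a weak solution of $\operatorname{div}(\check A\nabla\check v)=0$ with $\check A=\widetilde A_0$ on $\{y_n>0\}$ and $\check A=R\widetilde A_0R$ on $\{y_n<0\}$ ($R=\operatorname{diag}(1,\dots,1,-1)$), a coefficient matrix that jumps across $\{y_n=0\}$; the constant-coefficient interior oscillation decay you invoke does not apply to it. Handling the general (non-symmetric) case requires a direct boundary Campanato estimate for the conormal (equivalently, oblique-derivative) problem with a flat boundary, which is exactly what \cite{dlk20} supplies, or an extra reduction using that a constant antisymmetric part contributes only a tangential term to the conormal derivative. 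As written, your proof is complete for symmetric $A_0$ but not in the generality the lemma asserts.
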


Let $A_0\in\mathbb{R}^{n\times n}$, $s\in(0,R_1)$, and $\boldsymbol{F}\in L^2(B({\bf0},R_1);{\mathbb{R}^n})$ be the same as in \eqref{e4.23}.
Assume that $v$ is a weak solution of the Dirichlet problem
\begin{equation}\label{e4.24}
\left\{\begin{array}{ll}
\mathrm{div}_z\left(A_0\nabla_zv\right)=\mathrm{div}\left(\boldsymbol{F}\right)\ \ &\text{in}\ H_{\mathbf{J}_s}\cap
B\left({\bf0},{\frac{\lambda}{\Lambda}R_1}\right),\\
v=0\  &\text{on}\ \left\{(\mathbf{J}_sz)_n=0\right\}\cap B\left({\bf0},{\frac{\lambda}{\Lambda}R_1}\right).
\end{array}\right.
\end{equation}
By applying an argument similar to that used in the proof of
\cite[Lemma 2.8]{dek18} and a change of variables, we also obtain the following lemma for the Dirichlet
problem \eqref{e4.24}; we omit its proof.

\begin{lemma}\label{l4.4}
Let $R_1\in(0,\infty)$ be the same as in \eqref{e4.10} and let $v$ be a weak solution of the Dirichlet
problem \eqref{e4.24}. Then, for any given $s\in(0,\lambda R_1)$, any $r\in(0,\lambda R_1/\Lambda)$,
$\delta\in(0,1)$, and $\boldsymbol{F}_0\in{\mathbb{R}^n}$,
\begin{align*}
&\left[\fint_{\delta B({\bf0},r)\cap H_{{\bf J}_s}}\left|\nabla v(z)-\boldsymbol{V}_{\delta r}{\bf J}_s\right|^2\,dz\right]^{\frac{1}{2}}\\ \nonumber
&\quad\le C\delta\left[\fint_{B({\bf0},r)\cap H_{{\bf J}_s}}\left|\nabla v(z)-\boldsymbol{V}_r{\bf J}_s\right|^2\,dz\right]^{\frac{1}{2}}
+C_{(n,\mu_0,\delta)}\left[\fint_{B({\bf0},r)\cap H_{{\bf J}_s}}|\boldsymbol{F}(z)-\boldsymbol{F}_0|^2\,dz\right]^{\frac{1}{2}},
\end{align*}
where $\boldsymbol{V}_r:=(0,\ldots,0,(v_{z_{n}})_{D_r})$ with $D_r:=B({\bf0},r)\cap
H_{{\bf J}_s}$, ${\bf J}_s$ is the same as in \eqref{e4.19}, $C$ is a positive constant independent of $\delta,$ $s,$ $r,$ $v$,
and $\boldsymbol{F}$, and $C_{(n,\mu_0,\delta)}$ is a positive constant depending only on $\delta$, $\mu_0$, and $n$.
\end{lemma}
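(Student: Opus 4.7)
The plan is to adapt the argument of \cite[Lemma 2.8]{dek18} to the tilted half-space $H_{\mathbf{J}_s}$ via the change of variables $y=\mathbf{J}_s z$. The proof has three ingredients: a Caccioppoli-type comparison estimate, a decay estimate for the homogeneous constant-coefficient Dirichlet problem on a flat half-ball obtained by odd reflection, and a pullback identification of the natural ``gradient constant.''

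First, I would decompose $v=w+(v-w)$, where $w$ is the weak solution of the homogeneous Dirichlet problem
\begin{equation*}
\mathrm{div}_z(A_0\nabla_z w)=0\ \text{in}\ B({\bf 0},r)\cap H_{\mathbf{J}_s},\quad w=v\ \text{on}\ \partial B({\bf 0},r)\cap H_{\mathbf{J}_s},\quad w=0\ \text{on}\ \{(\mathbf{J}_sz)_n=0\}\cap B({\bf 0},r).
\end{equation*}
Since $v-w\in W^{1,2}_0(B({\bf 0},r)\cap H_{\mathbf{J}_s})$, testing the equation satisfied by $v-w$ against itself, invoking the uniform ellipticity \eqref{e1.5}, and noting that $\boldsymbol{F}$ may be replaced by $\boldsymbol{F}-\boldsymbol{F}_0$ (the constant drops out since $v-w$ vanishes on the whole boundary), Young's inequality yields
\begin{equation*}
\fint_{B({\bf 0},r)\cap H_{\mathbf{J}_s}}|\nabla(v-w)|^2\,dz\lesssim\fint_{B({\bf 0},r)\cap H_{\mathbf{J}_s}}|\boldsymbol{F}(z)-\boldsymbol{F}_0|^2\,dz.
\end{equation*}

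Next, I would apply the change of variables $y=\mathbf{J}_sz$ and set $\widetilde{w}(y):=w(\mathbf{J}_s^{-1}y)$. Since $\mathbf{J}_s$ is a lower-triangular perturbation of the identity with $\det\mathbf{J}_s=1$ and a uniform $L^\infty$ bound (from the Lipschitz character of $\psi$), the ellipsoid $\mathbf{J}_s B({\bf 0},\rho)$ is sandwiched between concentric balls of radii comparable to $\rho$, while $H_{\mathbf{J}_s}$ is mapped onto $\{y_n>0\}$. The function $\widetilde{w}$ then solves a constant-coefficient homogeneous Dirichlet problem with $\widetilde{w}=0$ on $\{y_n=0\}$. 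Extending $\widetilde{w}$ by odd reflection across $\{y_n=0\}$---permissible because the tangential derivatives vanish on the flat boundary and the coefficients are constant---and using classical interior regularity for the extended solution, I obtain a decay estimate of the form
\begin{equation*}
\left[\fint_{B^+({\bf 0},\delta r)}\left|\nabla_y\widetilde{w}-\widetilde{\boldsymbol{W}}_{\delta r}\right|^2\,dy\right]^{\frac12}\le C\delta\left[\fint_{B^+({\bf 0},r)}\left|\nabla_y\widetilde{w}-\widetilde{\boldsymbol{W}}_r\right|^2\,dy\right]^{\frac12},
\end{equation*}
where $\widetilde{\boldsymbol{W}}_\rho:=(0,\ldots,0,(\widetilde{w}_{y_n})_{B^+({\bf 0},\rho)})$ carries only the normal-derivative average (the tangential averages vanish by the oddness of the reflection).

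Finally, pulling back via $\nabla_z w(z)=\nabla_y\widetilde{w}(\mathbf{J}_sz)\mathbf{J}_s$, and using that the $n$-th column of $\mathbf{J}_s$ is $e_n$ (so the $n$-th component of $\nabla_z w$ equals $\widetilde{w}_{y_n}\circ\mathbf{J}_s$), the vector $\widetilde{\boldsymbol{W}}_\rho\mathbf{J}_s$ can be identified---up to errors absorbable into $C_{(n,\mu_0,\delta)}$ thanks to the comparability of $\mathbf{J}_s B({\bf 0},\rho)\cap\{y_n>0\}$ with $D_\rho$ and the bound on $\|\mathbf{J}_s\|$---with $\boldsymbol{W}_\rho\mathbf{J}_s$ where $\boldsymbol{W}_\rho:=(0,\ldots,0,(w_{z_n})_{D_\rho})$. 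Replacing $w_{z_n}$ by $v_{z_n}$ in these averages costs at most $\|\nabla(v-w)\|_{L^2(D_r)}$ by the triangle inequality, which is controlled by the Caccioppoli estimate above. The main technical obstacle is this last bookkeeping step: verifying that the ``best gradient constant'' $\boldsymbol{V}_\rho\mathbf{J}_s$ in $z$-coordinates corresponds precisely to the normal-derivative constant $\widetilde{\boldsymbol{W}}_\rho$ in $y$-coordinates, a correspondence made possible by the special triangular structure of $\mathbf{J}_s$ and the unit determinant condition.
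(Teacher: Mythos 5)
Your overall strategy---decompose $v=w+(v-w)$ with $w$ solving the homogeneous Dirichlet problem, use the Caccioppoli-type comparison to control $\nabla(v-w)$ by $\boldsymbol{F}-\boldsymbol{F}_0$, flatten by $y={\bf J}_sz$, and then apply a constant-coefficient boundary decay estimate before pulling back---is the right one and is consistent with the paper's one-line proof, which invokes the corresponding estimate from \cite[Lemma 2.8]{dek18} together with the same change of variables. The Caccioppoli step and the pullback bookkeeping are sound modulo the comparability of ${\bf J}_sD_\rho$ with $B^+({\bf0},\rho)$, which you correctly identify.

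The genuine gap is in the odd-reflection step. After the change of variables, the new coefficient matrix is $\widetilde{A}_0={\bf J}_sA_0{\bf J}_s^t$, which is a general constant symmetric positive-definite matrix. For such a matrix, extending $\widetilde w$ by odd reflection across $\{y_n=0\}$ does \emph{not} produce a weak solution of the same constant-coefficient equation in a full ball: a direct computation shows the extension solves $\mathrm{div}(A^{\mathrm{ext}}\nabla\widetilde{w}^{\mathrm{ext}})=0$ where $A^{\mathrm{ext}}$ equals $\widetilde{A}_0$ on $\{y_n>0\}$ and, on $\{y_n<0\}$, equals $\widetilde{A}_0$ with the off-diagonal entries $\widetilde a_{in}$ (for $i<n$) flipped in sign. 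Thus $A^{\mathrm{ext}}$ is merely piecewise constant, and "classical interior regularity" does not apply to $\widetilde{w}^{\mathrm{ext}}$ as you claim; the reflection argument works directly only when $\widetilde a_{in}=0$ for $i<n$, which need not hold here. The standard fix is a further linear change of variables $\tilde y=Ty$ with $T$ of the block form $\begin{pmatrix}T'&t\\0&c\end{pmatrix}$ (so that $\{y_n=0\}$ is preserved) chosen so that $T\widetilde{A}_0T^t={\bf I}_n$ (possible by a Schur-complement/Cholesky factorization of $\widetilde{A}_0$); after this normalization the equation becomes the Laplacian in a half-space and odd reflection is legitimate. Alternatively, one can bypass reflection entirely and cite the boundary Schauder estimate for the constant-coefficient Dirichlet problem on a half-ball, which is what \cite[Lemma 2.8]{dek18} effectively provides. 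A minor additional imprecision: the averages of the tangential derivatives over the half-ball $B^+({\bf0},\rho)$ are not exactly zero (the oddness only kills averages over full balls); what is true is that these averages are $O(\rho)$, which is what the decay estimate actually requires.
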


\subsection{Decay estimate near the boundary}

In this subsection, we prove the mean oscillation-type decay estimate of the gradient
of the solution to the problem \eqref{e1.6} or \eqref{e1.8} near the boundary of $\Omega$,
which plays a key role in the proof of Theorem \ref{th1.1}.

\begin{theorem}\label{t4.1}
Let $n\ge2$, $\Omega$ be a bounded Lipschitz domain of ${\mathbb{R}^n}$, and $x\in\partial\Omega$, and let $R_2:=\min\{R_0,R_1\}$,
where $R_0$ is the same as in Assumption \ref{a1.1} and $R_1$ the same as in \eqref{e4.10}.
Assume that the matrix $A$ and the domain $\Omega$ satisfy Assumption \ref{a1.1}. Assume further that $\theta\in(0,1)$,
$\boldsymbol{f}\in \mathrm{BMO}_r(\Omega;{\mathbb{R}^n})$, and $u$ is the weak solution to the Neumann problem
\eqref{e1.6}. Then there exist constants $C\in(0,\infty)$ and $q\in(2,\infty)$, independent of $\theta$,
such that, for any given $s\in(0,R_2]$,
\begin{align}\label{e4.25}
&\left[\fint_{\Omega\cap B(x,\theta s)}\left|\nabla u(y)-(\nabla u)_{\Omega\cap B(x,\theta s)}\right|^2\,dy\right]^{\frac{1}{2}}\\ \notag
&\quad\le C\theta\left[\fint_{\Omega\cap B(x,s)}\left|\nabla u(y)-(\nabla u)_{\Omega\cap B(x,s)}\right|^2\,dy\right]^{\frac{1}{2}}
+C_{(n,\mu_0,\theta)}\sigma(s)\fint_{\Omega\cap B(x,s)}|\nabla u(y)|\,dy\\ \notag
&\qquad+C_{(n,\mu_0,\theta)}\sigma(s)\left[\fint_{\Omega\cap B(x,s)}|\boldsymbol{f}(y)|^{q}\,dy\right]^{\frac{1}{q}}
+C_{(n,\mu_0,\theta)}\|\boldsymbol{f}\|_{\mathrm{BMO}(\Omega;{\mathbb{R}^n})},
\end{align}
where $(\nabla u)_{\Omega\cap B(x,s)}:=((u_{x_1})_{\Omega\cap B(x,s)},\ldots,(u_{x_{n}})_{\Omega\cap B(x,r)})$,
the function $\sigma$ is the same as in Assumption \ref{a1.1}, and $C_{(n,\mu_0,\theta)}$ is a positive constant
depending on $n$, $\mu_0$, and $\theta$.
\end{theorem}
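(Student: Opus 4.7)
The plan is to reduce the boundary estimate to a decay estimate on a tilted half-space via the flattening construction from Section \ref{s4.2}. Without loss of generality, I take $x = \mathbf{0} \in \partial \Omega$. For each fixed $s \in (0, R_2]$, I would apply the flattening map $\Psi$ of \eqref{e4.10} and then straighten by multiplying by $\mathbf{J}_s$ from \eqref{e4.19}; by Lemma \ref{l4.2}, the transformed function $\overline{u}$ solves the Neumann problem \eqref{e4.22} on $H_{\mathbf{J}_s} \cap B(\mathbf{0}, \frac{\lambda}{\Lambda} R_1)$ with coefficient matrix $A_{\mathbf{J}_s^{-1} \overline{\mathbf{J}}}$ and right-hand side $\overline{\boldsymbol{f}}\, \overline{\mathbf{J}^t} (\mathbf{J}_s^{-1})^t$. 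I would then freeze coefficients at the average $A_0 := (A_{\mathbf{J}_s^{-1} \overline{\mathbf{J}}})_{B(\mathbf{0}, r) \cap H_{\mathbf{J}_s}}$ with $r \sim s$ selected so that $\Omega \cap B(\mathbf{0}, s)$ and $\Omega \cap B(\mathbf{0}, \theta s)$ correspond (via \eqref{e4.13}, \eqref{e4.14}, and Remark \ref{r4.1}(iii)) to $B(\mathbf{0}, r) \cap H_{\mathbf{J}_s}$ and $B(\mathbf{0}, \theta r) \cap H_{\mathbf{J}_s}$, and apply Lemma \ref{l4.3} to $\overline{u}$ viewed as a solution of \eqref{e4.23} with effective data
\[
\widetilde{\boldsymbol{F}}(z) := \overline{\boldsymbol{f}}(z)\, \overline{\mathbf{J}^t}(z) (\mathbf{J}_s^{-1})^t + \bigl(A_0 - A_{\mathbf{J}_s^{-1} \overline{\mathbf{J}}}(z)\bigr) \nabla_z \overline{u}(z).
\]

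The central task is then to bound $[\fint_{B(\mathbf{0}, r) \cap H_{\mathbf{J}_s}} |\widetilde{\boldsymbol{F}} - \widetilde{\boldsymbol{F}}_0|^2]^{1/2}$ for a well-chosen constant $\widetilde{\boldsymbol{F}}_0$, which I split into a data part and a coefficient-frozen part. Taking $\widetilde{\boldsymbol{F}}_0 := (\overline{\boldsymbol{f}}\, \overline{\mathbf{J}^t})_{B \cap H_{\mathbf{J}_s}} (\mathbf{J}_s^{-1})^t$, the first contribution is, after reverting the change of variables using measure-preservation in Remark \ref{r4.1}(iii), controlled by the BMO-type oscillation of $\boldsymbol{f} \mathbf{J}^t$ over $\Omega \cap B(\mathbf{0}, C_0 s)$ for some fixed $C_0 > 0$. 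Here Lemma \ref{l2.4} enters crucially: by the Lipschitz property of $\psi$, $\mathbf{J}^t \in L^\infty(\Omega; \mathbb{R}^{n \times n})$, and by Assumption \ref{a1.1}(b), $\mathbf{J}^t \in \mathcal{L}^{\sigma(\cdot)}(\Omega; \mathbb{R}^{n \times n})$; the hypothesis $\lim_{r \to 0^+} \sigma(r) \ln(1/r) = 0$ yields $\mathcal{L}^{\sigma(\cdot)}(\Omega) \hookrightarrow \mathcal{L}^{\sigma_0(\cdot)}(\Omega)$, so $\mathbf{J}^t \in M(\mathrm{BMO}(\Omega))$, whence, via Lemma \ref{l2.2},
\[
\|\boldsymbol{f} \mathbf{J}^t\|_{\mathrm{BMO}(\Omega; \mathbb{R}^n)} \lesssim \|\boldsymbol{f}\|_{\mathrm{BMO}(\Omega; \mathbb{R}^n)}.
\]
This produces the $C_{(n, \mu_0, \theta)} \|\boldsymbol{f}\|_{\mathrm{BMO}}$ term in \eqref{e4.25}.

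For the coefficient-error piece $(A_0 - A_{\mathbf{J}_s^{-1} \overline{\mathbf{J}}}) \nabla \overline{u}$, the factorization $A_{\mathbf{J}_s^{-1} \overline{\mathbf{J}}} = \mathbf{J}_s^{-1} \overline{\mathbf{J}}\, \overline{A}\, \overline{\mathbf{J}^t} (\mathbf{J}_s^{-1})^t$, combined with $L^\infty$ bounds on $\mathbf{J}$ and $A$ and the oscillation bounds in Assumption \ref{a1.1}(a)-(b), gives $\fint_B |A_{\mathbf{J}_s^{-1} \overline{\mathbf{J}}} - A_0| \lesssim \sigma(s)$; Lemma \ref{l2.1} then promotes this to an $L^{2p'}$-oscillation bound $\lesssim \sigma(s)$ for any $p' \in (1, \infty)$. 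Choosing $p := q/2$ with $q \in (2, \infty)$ as in Proposition \ref{p4.1}, H\"older's inequality together with the Gehring-type estimate \eqref{e4.2} yields
\[
\Bigl(\fint_{B \cap H_{\mathbf{J}_s}} |(A_0 - A_{\mathbf{J}_s^{-1} \overline{\mathbf{J}}}) \nabla \overline{u}|^2\Bigr)^{1/2} \lesssim \sigma(s) \Bigl[\fint_{\Omega \cap B(\mathbf{0}, s)} |\nabla u| + \Bigl(\fint_{\Omega \cap B(\mathbf{0}, s)} |\boldsymbol{f}|^q\Bigr)^{1/q}\Bigr],
\]
which contributes the two $\sigma(s)$ terms in \eqref{e4.25}. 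Finally, the left-hand side of the decay bound from Lemma \ref{l4.3} is transferred back via $|\nabla_z \overline{u}(z)| \sim |\nabla_x u(\Psi^{-1}(\mathbf{J}_s z))|$ and measure preservation, yielding \eqref{e4.25}.

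The hard part will be the coordinated bookkeeping of two distinct sources of BMO-type oscillation under the nonlinear flattening, namely the coefficient matrix $A$ (Assumption \ref{a1.1}(a)) and the Jacobian $\mathbf{J}$ of the boundary graph (Assumption \ref{a1.1}(b)), both of which must combine into the single $\sigma(s)$ prefactor in \eqref{e4.25}; and the identification of the remaining pure-$\boldsymbol{f}$ contribution as a bona fide BMO norm so that Lemma \ref{l2.4} can be applied. Once the alignment is achieved, Lemma \ref{l4.3} supplies the $\theta$-contraction for the homogeneous part and Proposition \ref{p4.1} absorbs the higher gradient power emerging from H\"older.
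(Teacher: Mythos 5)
Your overall architecture is the same as the paper's: flatten via $\Psi$ and $\mathbf{J}_s$, apply Lemma~\ref{l4.2} to identify $\overline{u}$ as a solution on the tilted half-ball, freeze coefficients at scale $s$ and invoke Lemma~\ref{l4.3} with the effective data $\widetilde{\boldsymbol{F}}$, then split into a pure-data piece estimated via the pointwise-multiplier Lemma~\ref{l2.4} (giving $C\|\boldsymbol{f}\|_{\mathrm{BMO}}$) and a coefficient-error piece estimated via H\"older and the Gehring bound from Proposition~\ref{p4.1} (giving the two $\sigma(s)$ terms). Your freezing matrix $(A_{\mathbf{J}_s^{-1}\overline{\mathbf{J}}})_{B_r\cap H_{\mathbf{J}_s}}$ differs from the paper's $A_0:=(A)_{\Omega\cap B_s}$, but this is immaterial since the difference between the two constants is itself $\lesssim\sigma(s)$ by the oscillation bounds you invoke.

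There is, however, one genuine gap in your final step. You claim the left-hand side of the decay bound from Lemma~\ref{l4.3} (and, implicitly, the oscillation term on its right-hand side) transfers back to $\nabla u$ ``via $|\nabla_z\overline{u}(z)|\sim|\nabla_x u(\Psi^{-1}(\mathbf{J}_s z))|$ and measure preservation.'' A pointwise equivalence of magnitudes does \emph{not} transfer mean oscillations: the exact identity \eqref{e4.30} reads
\begin{equation*}
\nabla_z\overline{u}(z)=(\nabla_x u)\bigl(\Psi^{-1}(\mathbf{J}_s z)\bigr)\,\mathbf{J}^{-1}(\mathbf{J}_s z)\,\mathbf{J}_s,
\end{equation*}
and the correcting matrix $\mathbf{J}^{-1}(\mathbf{J}_s z)\mathbf{J}_s$ is $z$-dependent, so $\nabla_z\overline{u}(z)-\nabla u(\Psi^{-1}(\mathbf{J}_s z))$ does not vanish and contributes an additive error controlled by $|\mathbf{J}(\Psi^{-1}(\mathbf{J}_s z))-\mathbf{J}_s|\,|\nabla u(\Psi^{-1}(\mathbf{J}_s z))|$. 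You must estimate $\bigl[\fint_{\theta D_s}|\nabla\overline{u}(z)-\nabla u(\Psi^{-1}(\mathbf{J}_s z))|^2\,dz\bigr]^{1/2}$ using the oscillation bound \eqref{e4.26} on $\mathbf{J}$ (from Assumption~\ref{a1.1}(b) and Lemma~\ref{l2.1}) combined with \eqref{e4.2}; this produces the extra term $\sigma(s)\fint_{\Omega\cap B_{cs}}|\nabla u|+\sigma(s)\bigl[\fint_{\Omega\cap B_{cs}}|\boldsymbol{f}|^q\bigr]^{1/q}$, exactly as in the paper's \eqref{e4.29}--\eqref{e4.33}. The same correction is needed twice: passing from the oscillation of $\nabla\overline{u}$ over $\theta D_s$ down to the left-hand side of \eqref{e4.25}, and passing from the oscillation of $\nabla\overline{u}$ over $D_s$ up to $\fint_{\Omega\cap B_s}|\nabla u-(\nabla u)_{\Omega\cap B_s}|^2$. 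By luck the resulting errors have exactly the shape of terms already present on the right-hand side of \eqref{e4.25}, so the theorem survives --- but the route as you sketched it would not close. You should also note the final bookkeeping step: after the change of variables the ball radii come out as $\frac{\lambda}{\Lambda}\theta s$ and $\frac{\Lambda}{\lambda}s$ rather than $\theta s$ and $s$, which requires a redefinition of $\theta$.
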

\begin{proof}
Without loss of generality, we may assume that $x:={\bf0}$ and, for simplicity, we denote the ball $B({\bf 0},r)$
by $B_r$ throughout this proof. Assume that $q\in(2,q_0)$, where $q_0$ is the exponent appearing in
Proposition \ref{p4.1}. Let $\psi$ be the same as in \eqref{e4.10} and $\sigma$ the same as in Assumption \ref{a1.1}.
By the assumption that $\Omega$ satisfies Assumption \ref{a1.1}, we see that $\psi\in W^1\mathcal{L}^{\sigma(\cdot)}$,
which, combined with Lemma \ref{l2.1}, implies that there exists a positive constant $C$ such that
\begin{equation}\label{e4.26}
\sup_{r\in(0,R_0)}\frac{1}{\sigma(r)}\left[\fint_{\Omega\cap B_r}|{\bf J}(x)-({\bf J})_{\Omega\cap B_r}|^{
2(\frac{q}{2})'}\,dx\right]^{\frac{1}{2(\frac{q}{2})'}}\le C,
\end{equation}
where ${\bf J}$ is the same as in \eqref{e4.11}.

Let $\theta\in(0,1)$ and $s\in (0,R_2]$. From \eqref{e4.22}, we deduce that $\overline{u}$,
defined as in \eqref{e4.20}, is a weak solution of the Neumann problem
\begin{equation}\label{e4.27}
\left\{\begin{array}{ll}
\mathrm{div}_z\left(A_0\nabla_z\overline{u}\right)=\mathrm{div}\left(A_0\nabla_z\overline{u}-A_{{\bf J}_s^{-1}
\overline{{\bf J}}}\nabla_z\overline{u}+\underline{\boldsymbol{f}}\right)\ \
&\text{in}\ H_{{\bf J}_s}\cap B_{\frac{\lambda}{\Lambda}R_2},\\
\dfrac{\partial \overline{u}}{\partial\boldsymbol{\nu}}=\underline{\boldsymbol{f}}
\cdot\boldsymbol{\nu}\  &\text{on}\  \left\{({\bf J}_sz)_n=0\right\}\cap B_{\frac{\lambda}{\Lambda}R_2},
\end{array}\right.
\end{equation}
where $A_0:=(A)_{\Omega\cap B_s}$ and $\underline{\boldsymbol{f}}:=\overline{{\boldsymbol{f}}}\,
\overline{{\bf J}^t}({\bf J}_s^{-1})^t$. Let $D_s:=H_{{\bf J}_s}\cap B_s$, $\theta D_s:=H_{{\bf J}_s}\cap B_{\theta s}$,
and ${\bf J}_s$ be the same as in \eqref{e4.19}. Then, by \eqref{e4.22}, \eqref{e4.23}, and Lemma
\ref{l4.3},  we conclude that there exists a positive constant $C$ independent of $\theta$ such that, for
any $\boldsymbol{f}_0\in{\mathbb{R}^n}$,
\begin{align}\label{e4.28}
&\left[\fint_{\theta D_s}\left|\nabla\overline{u}(z)-(\nabla\overline{u})_{\theta D_{s}}\right|^2\,dz\right]^{\frac{1}{2}}\\ \notag
&\quad\le C\theta\left[\fint_{D_s}\left|\nabla\overline{u}(z)-(\nabla\overline{u})_{D_{s}}\right|^2\,dz\right]^{\frac{1}{2}}
+C_{(n,\mu_0,\theta)}\left[\fint_{D_s}\left|\underline{\boldsymbol{f}}(z)-\boldsymbol{f}_0\right|^2\,dz\right]^{\frac{1}{2}}\\ \notag
&\qquad+C_{(n,\mu_0,\theta)}\left[\fint_{D_s}\left|A_0\nabla\overline{u}(z)-A_{{\bf J}_s^{-1}\overline{{\bf J}}}(z)
\nabla\overline{u}(z)\right|^2\,dz\right]^{\frac{1}{2}},
\end{align}
where $(\nabla\overline{u})_{D_{s}}:=((\overline{{u}}_{z_1})_{D_s},\ldots,(\overline{{u}}_{z_{n}})_{D_s})$ and
$C_{(n,\mu_0,\theta)}$ is a positive constant depending only on $n$, $\mu_0$, and $\theta$.

Fix $r$ such that $\frac{\Lambda}{\lambda}r=\theta s$. From \eqref{e4.13} and \eqref{e4.12}, it follows that
$${\bf J}_s^{-1}\Psi(\Omega\cap B_r)\subset {\bf J}_s^{-1}B_{\Lambda r}^+=H_{{\bf J}_s}\cap B_{\Lambda r}\subset
H_{{\bf J}_s}\cap B_{\frac{\Lambda}{\lambda}r}=H_{{\bf J}_s}\cap B_{\theta s}=\theta D_s,$$
which, together with the fact that $\det{(\nabla\Psi^{-1}({\bf J})_s)}
=\det{(\nabla\Psi^{-1})}\det{({\bf J}_s)}=1$, a change of variables, and \eqref{e4.28}, further implies that
\begin{align}\label{e4.29}
&\left[\fint_{\Omega\cap B_r}\left|\nabla u(x)-(\nabla u)_{\Omega\cap B_r}\right|^2\,dx\right]^{\frac{1}{2}}\\ \notag
&\quad\le 2\left[\fint_{\Omega\cap B_r}
\left|\nabla u(x)-(\nabla\overline{u})_{\theta D_{s}}\right|^2\,dx\right]^{\frac{1}{2}}\\ \notag
&\quad=2\left[\fint_{({\bf J_s})^{-1}\Psi(\Omega\cap B_r)}
\left|\nabla u\left(\Psi^{-1}\left({\bf J}_sz\right)\right)-(\nabla\overline{u})_{\theta D_{s}}\right|^2\,dz\right]^{\frac{1}{2}}\\ \notag
&\quad\le 2\left[\fint_{\theta D_s}\left|\nabla\overline{u}(z)-(\nabla\overline{u})_{\theta D_{s}}\right|^2\,dz\right]^{\frac{1}{2}}
+2\left[\fint_{\theta D_s}\left|\nabla\overline{u}(z)-\nabla u\left(\Psi^{-1}({\bf J}_sz)\right)\right|^2\,dz\right]^{\frac{1}{2}}\\ \notag
&\quad\le 2\left[\fint_{\theta D_s}\left|\nabla\overline{u}(z)-\nabla u\left(\Psi^{-1}({\bf J}_sz)\right)\right|^2\,dz\right]^{\frac{1}{2}}+C\theta\left[\fint_{D_s}\left|\nabla\overline{u}(z)-(\nabla\overline{u})_{D_{s}}\right|^2\,dz\right]^{\frac{1}{2}}\\ \notag
&\qquad +C_{(n,\mu_0,\theta)}\left[\fint_{D_s}\left|\underline{\boldsymbol{f}}(z)-\boldsymbol{f}_0\right|^2\,dz\right]^{\frac{1}{2}}
+C_{(n,\mu_0,\theta)}\left[\fint_{D_s}\left|A_0\nabla\overline{u}(z)-A_{{\bf J}_s^{-1}\overline{{\bf J}}}(z)
\nabla\overline{u}(z)\right|^2\,dz\right]^{\frac{1}{2}}.
\end{align}

Next, we estimate each term on the right-hand side of \eqref{e4.29}. Observe that
\begin{equation}\label{e4.30}
\nabla_z\overline{u}(z)=\left(\nabla_x u\right)\left(\Psi^{-1}\left({\bf J}_s z\right)\right){\bf J}^{-1}\left({\bf J}_s z\right){\bf J}_s,
\end{equation}
\begin{equation*}
{\bf J}^{-1}\left({\bf J}_s z\right){\bf J}\left(\Psi^{-1}({\bf J}_s z)\right)={\bf I}_n,
\end{equation*}
and
\begin{equation}\label{e4.31}
\Psi^{-1}({\bf J}_sD_s)=\Psi^{-1}\left({\bf J}_s\left(H_{{\bf J}_s}\cap B_s\right)\right)=\Psi^{-1}
\left(B_s^+\right)\subset\Omega\cap B_{\frac{s}{\lambda}}
\subset\Omega\cap B_{\frac{\Lambda}{\lambda}s}.
\end{equation}
These, combined with \eqref{e4.2},  \eqref{e4.26}, H\"older's inequality, and the facts that ${\bf J}_s$
and ${\bf J}^{-1}$ are bounded, further imply that there exists a positive constant $C$ such that
\begin{align}\label{e4.32}
&\left[\fint_{\theta D_s}\left|\nabla\overline{u}(z)-\nabla u\left(\Psi^{-1}({\bf J}_sz)\right)\right|^2\,dz\right]^{\frac{1}{2}}\\ \notag
&\quad\le C\left[\fint_{\theta D_s}
\left|{\bf J}\left(\Psi^{-1}({\bf J}_s z)\right)-{\bf J}_s\right|^2\left|\nabla u\left(\Psi^{-1}({\bf J}_s z)\right)\right|^2\,dz\right]^{\frac{1}{2}}\\ \notag
&\quad\le C\left[\fint_{D_s}\left|{\bf J}\left(\Psi^{-1}({\bf J}_s z)\right)-{\bf J}_s\right|^{2(\frac q2)'}dz\right]
^{\frac{1}{2(\frac q2)'}}\left[\fint_{D_s}
\left|\nabla u\left(\Psi^{-1}({\bf J}_s z)\right)\right|^{q}\,dz\right]^{\frac{1}{q}}\\ \notag
&\quad\le C\sigma(s)\left[\fint_{\Omega\cap B_{\frac{\Lambda}{\lambda}s}}|\nabla u(z)|^{q}\,dx\right]^{\frac{1}{q}}\\ \notag
&\quad\le C\sigma(s) \fint_{\Omega\cap B_{2\frac{\Lambda}{\lambda}s}}|\nabla u(x)|\,dx+C\sigma(s)
\left[\fint_{\Omega\cap B_{2\frac{\Lambda}{\lambda}s}}|\boldsymbol{f}(x)|^{q}\,dx\right]^{\frac{1}{q}}.
\end{align}
Similarly, there exists a positive constant $C$ such that
\begin{align}\label{e4.33}
&\left[\fint_{D_s}\left|\nabla\overline{u}(z)-(\nabla\overline{u})_{D_{s}}\right|^2\,dz\right]^{\frac{1}{2}}\\ \notag
&\quad\le 2\left[\fint_{D_s}\left|\nabla\overline{u}(z)-(\nabla u)_{\Omega\cap B_{\frac{\Lambda}{\lambda}s}}\right|^2\,dz\right]^{\frac{1}{2}}\\ \notag
&\quad\le 2\left[\fint_{D_s}\left|\nabla\overline{u}(z)-\nabla u
\left(\Psi^{-1}({\bf J}_sz)\right)\right|^2\,dz\right]^{\frac{1}{2}}\\ \notag
&\qquad+2\left[\fint_{D_s}\left|\nabla u\left(\Psi^{-1}({\bf J}_sz)\right)
-(\nabla u)_{\Omega\cap B_{\frac{\Lambda}{\lambda}s}}\right|^2\,dz\right]^{\frac{1}{2}}\\ \notag
&\quad\le C\left[\fint_{\Omega\cap B_{\frac{\Lambda}{\lambda}s}}\left|\nabla u(x)-(\nabla u)_{\Omega\cap B_{\frac{\Lambda}{\lambda}s}}\right|^{2}\,dx\right]^{\frac{1}{2}}+C\sigma(s)\left[\fint_{\Omega\cap B_{2\frac{\Lambda}{\lambda}s}}
|\boldsymbol{f}(x)|^{q}\,dx\right]^{\frac{1}{q}}\\ \notag
&\qquad+C\sigma(s)\fint_{\Omega\cap B_{2\frac{\Lambda}{\lambda}s}}|\nabla u(x)|\,dx.
\end{align}
By the definition of $\underline{\boldsymbol{f}}$ in \eqref{e4.27}, \eqref{e4.26}, Assumption \ref{a1.1}(a),
the boundedness of ${\bf J},{\bf J}^{-1}$, and ${\bf J}_s$, and Lemma \ref{l2.4}, we obtain that
\begin{align}\label{e4.34}
\inf_{\boldsymbol{f}_0\in{\mathbb{R}^n}}\left[\fint_{D_s}\left|\underline{\boldsymbol{f}}(z)-
\boldsymbol{f}_0\right|^2\,dz\right]^{\frac{1}{2}}\lesssim\left\|\overline{{\boldsymbol{f}}}\,
\overline{{\bf J}^t}\right\|_{\mathrm{BMO}(D_{R_2};{\mathbb{R}^n})}\lesssim\left\|\overline{{\boldsymbol{f}}}
\right\|_{\mathrm{BMO}(D_{R_2};{\mathbb{R}^n})}\lesssim\left\|{\boldsymbol{f}}\right\|_{\mathrm{BMO}(\Omega;{\mathbb{R}^n})}.
\end{align}
Furthermore, observe that, for any $n\times n$ matrix ${\bf D},{\bf E},{\bf F},{\bf H}$,
\begin{align*}
{\bf F}{\bf E}{\bf F}^t-{\bf H}{\bf D}{\bf H}^t={\bf F}({\bf E}-{\bf D}){\bf F}^t
+({\bf F}-{\bf H}){\bf D}{\bf F}^t+{\bf H}{\bf D}({\bf F}^t-{\bf H}^t).
\end{align*}
From this, \eqref{e4.2}, \eqref{e4.30}, \eqref{e4.31}, Lemma \ref{l2.1}, H\"older's inequality,
and Assumption \ref{a1.1}$\mathrm{(a)}$, we deduce that
there exists a positive constant $C$ such that
\begin{align}\label{e4.35}
&\left[\fint_{D_s}\left|A_0\nabla\overline{u}(z)-A_{{\bf J}_s^{-1}\overline{{\bf J}}}\nabla\overline{u}(z)
\right|^2\,dz\right]^{\frac{1}{2}}\\ \notag
&\quad=\left[\fint_{D_s}\left|\left(A_0-{\bf J}_s^{-1}{\bf J}\left(\Psi^{-1}({\bf J}_sz)\right)A\left(\Psi^{-1}
({\bf J}_sz)\right){\bf J}^t\left(\Psi^{-1}({\bf J}_sz)\right)
\left({\bf J}_s^{-1}\right)^t\right)\nabla\overline{u}(z)\right|^2\,dz\right]^{\frac{1}{2}}\\ \notag
&\quad\le C\left[\fint_{D_s}|\nabla\overline{u}(z)|^{q}\,dz\right]^{\frac{1}{q}}\\ \notag
&\qquad\times\left[\fint_{D_s}\left|{\bf J}_sA_0{\bf J}^t_s-
{\bf J}\left(\Psi^{-1}({\bf J}_sz)\right)A\left(\Psi^{-1}({\bf J}_sz)\right){\bf J}^t
\left(\Psi^{-1}({\bf J}_sz)\right)\right|^{2(\frac{q}{2})'}\,dz\right]^{\frac{1}{2(\frac{q}{2})'}}\\ \notag
&\quad\le C\left[\fint_{D_s}|\nabla\overline{u}(z)|^{q}\,dz\right]^{\frac{1}{q}}\left[\fint_{D_s}\left|{\bf J}
\left(\Psi^{-1}({\bf J}_s z)\right)-{\bf J}_s\right|^{2(\frac{q}{2})'}\,dz\right]^{\frac{1}{2(\frac{q}{2})'}}\\ \notag
&\quad\quad+C\left[\fint_{D_s}|\nabla\overline{u}(z)|^{q}\,dz\right]^{\frac{1}{q}}\left[\fint_{D_s}\left|A_0-A\left(\Psi^{-1}
({\bf J}_sz)\right)\right|^{2(\frac{q}{2})'}\,dz\right]^{\frac{1}{2(\frac{q}{2})'}}\\ \notag
&\quad\le C\sigma(s)\left\{\fint_{\Omega\cap B_{2\frac{\Lambda}{\lambda}s}}|\nabla u(x)|\,dx
+\left[\fint_{\Omega\cap B_{2\frac{\Lambda}{\lambda}s}}|\boldsymbol{f}(x)|^{q}\,dx\right]^{\frac{1}{q}}\right\}.
\end{align}
Combining \eqref{e4.29}, \eqref{e4.32}, \eqref{e4.33}, \eqref{e4.34}, and
\eqref{e4.35}, we conclude that there exists a positive constant $C$ independent of $\theta$ such that
\begin{align}\label{e4.36}
&\left[\fint_{\Omega\cap B_{\frac{\lambda}{\Lambda}\theta s}}\left|\nabla u(x)-
(\nabla u)_{\Omega\cap B_{\frac{\Lambda}{\lambda}\theta s}}\right|^2\,dx\right]^{\frac{1}{2}}\\ \notag
&\quad\le C_{(n,\mu_0,\theta)}\sigma(s)\fint_{\Omega\cap B_{2\frac{\Lambda}{\lambda}s}}|\nabla u(x)|\,dx
+C_{(n,\mu_0,\theta)}\sigma(s)\left[\fint_{\Omega\cap B_{2\frac{\Lambda}{\lambda}s}}
\left|\boldsymbol{f}(x)\right|^{q}\,dx\right]^{\frac{1}{q}}\\ \notag
&\qquad+C_{(n,\mu_0,\theta)}\|\boldsymbol{f}\|_{\mathrm{BMO}(\Omega;{\mathbb{R}^n})}
+C\theta\left[\fint_{\Omega\cap B_{\frac{\Lambda}{\lambda}s}}\left|\nabla u(x)-
(\nabla u)_{\Omega\cap B_{\frac{\Lambda}{\lambda}s}}\right|^2\,dx\right]^{\frac{1}{2}}.
\end{align}
Therefore, the estimate \eqref{e4.25} follows from \eqref{e4.36} by redefining the parameter $\theta$.
This finishes the proof of Theorem \ref{t4.1}.
\end{proof}

Using the interior estimate \eqref{e3.4}, Proposition \ref{p4.2}, Lemma \ref{l4.4}, and an argument similar to that
used in the proof of Theorem \ref{t4.1}, we obtain the following decay estimate at the
boundary for the gradient of the solution $u$ to the Dirichlet problem \eqref{e1.8}; we omit its proof.

\begin{theorem}\label{t4.2}
Let $n\ge2$ and $\Omega$ be a bounded Lipschitz domain of ${\mathbb{R}^n}$. Assume that ${\bf 0}\in\partial\Omega$. Let $R_2:=\min\{R_0,R_1\}$,
where $R_0$ is the same as in Assumption \ref{a1.1} and $R_1$ the same as in \eqref{e4.10}.
Assume that the matrix $A$ and the domain $\Omega$ satisfy Assumption \ref{a1.1},
and local coordinates in $\Omega\cap B({\bf0},R_1)$ are the same as in Subsection \ref{s4.2}. Assume further that $\theta\in(0,1)$,
$\boldsymbol{f}\in \mathrm{BMO}_r(\Omega;{\mathbb{R}^n})$, and $u$ is the weak solution to the Dirichlet problem
\eqref{e1.8}. Then there exist constants $C\in(0,\infty)$ and $q\in(2,\infty)$, independent of $\theta$,
such that, for any given $s\in(0,R_2]$ and any $\boldsymbol{f}_0\in{\mathbb{R}^n}$,
\begin{align*}
\left[\fint_{\Omega\cap B({\bf0},\theta s)}\left|\nabla u(y)-\boldsymbol{U}_{\theta s}\right|^2\,dy\right]^{\frac{1}{2}}&\le
C\theta\left[\fint_{\Omega\cap B({\bf0},s)}\left|\nabla u(y)-\boldsymbol{U}_{s}\right|^2\,dy\right]^{\frac{1}{2}}\\
&\quad+C_{(n,\mu_0,\theta)}\sigma(s)\fint_{\Omega\cap B({\bf0},s)}
|\nabla u(y)|\,dy\\ \notag
&\quad+C_{(n,\mu_0,\theta,R_2)}\left[\fint_{\Omega\cap B({\bf0},s)}|\boldsymbol{f}(y)-\boldsymbol{f}_0|^{q}\,dy\right]^{\frac{1}{q}},
\end{align*}
where $\boldsymbol{U}_s:=(0,\ldots,0,(u_{x_n})_{\Omega\cap B({\bf0},s)})$, the function $\sigma$ is the same as in Assumption \ref{a1.1},
$C_{(n,\mu_0,\theta)}$ is a positive constant depending only on $n$, $\mu_0$, and $\theta$, and $C_{(n,\mu_0,\theta,R_2)}$ is a positive constant depending only on $n$, $\mu_0$, $\theta$, and $R_2$.
\end{theorem}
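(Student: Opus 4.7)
The plan is to adapt the proof of Theorem \ref{t4.1} to the Dirichlet setting, substituting Proposition \ref{p4.2} for Proposition \ref{p4.1}, Lemma \ref{l4.4} for Lemma \ref{l4.3}, and the interior estimate \eqref{e3.4} at the appropriate step. After reducing to the case $x=\mathbf{0}\in\partial\Omega$ and employing the flattening map $\Psi$ and the linear map $\mathbf{J}_s$ from Subsection \ref{s4.2}, I would introduce $\overline{u}$ via \eqref{e4.20}. Since $u$ vanishes on $\partial\Omega\cap B(\mathbf{0},R_1)$, the pulled-back solution $\overline{u}$ vanishes on $\{(\mathbf{J}_s z)_n=0\}\cap B(\mathbf{0},\lambda R_1/\Lambda)$, and, after freezing $A_0:=(A)_{\Omega\cap B(\mathbf{0},s)}$, it satisfies a Dirichlet problem of the form \eqref{e4.24} with source
\[
\boldsymbol{F}(z):=A_0\nabla_z\overline{u}(z)-A_{\mathbf{J}_s^{-1}\overline{\mathbf{J}}}(z)\nabla_z\overline{u}(z)+\underline{\boldsymbol{f}}(z)-\boldsymbol{f}_0,
\]
where $\underline{\boldsymbol{f}}:=\overline{\boldsymbol{f}}\,\overline{\mathbf{J}^t}(\mathbf{J}_s^{-1})^t$. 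Subtracting the constant $\boldsymbol{f}_0$ inside the Dirichlet equation is legitimate by \eqref{e1.9}, which is precisely the feature that lets the final bound depend on $\fint|\boldsymbol{f}-\boldsymbol{f}_0|^q$ rather than the BMO norm of $\boldsymbol{f}$.

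Applying Lemma \ref{l4.4} to $\overline{u}$ yields, with $\boldsymbol{V}_r:=(0,\ldots,0,(\overline{u}_{z_n})_{D_r})$ and $D_s:=H_{\mathbf{J}_s}\cap B(\mathbf{0},s)$,
\begin{align*}
\left[\fint_{\theta D_s}\left|\nabla\overline{u}(z)-\boldsymbol{V}_{\theta s}\mathbf{J}_s\right|^{2}\,dz\right]^{1/2}
&\le C\theta\left[\fint_{D_s}\left|\nabla\overline{u}(z)-\boldsymbol{V}_{s}\mathbf{J}_s\right|^{2}\,dz\right]^{1/2}\\
&\quad+C_{(n,\mu_0,\theta)}\left[\fint_{D_s}|\boldsymbol{F}(z)|^{2}\,dz\right]^{1/2}.
\end{align*}
The transfer from $\nabla\overline{u}$ back to $\nabla u$ proceeds exactly as in \eqref{e4.32}--\eqref{e4.33}: using $\nabla_z\overline{u}(z)=(\nabla_x u)(\Psi^{-1}(\mathbf{J}_s z))\mathbf{J}^{-1}(\mathbf{J}_s z)\mathbf{J}_s$, the inclusion $\Psi^{-1}(\mathbf{J}_s D_s)\subset\Omega\cap B(\mathbf{0},s/\lambda)$, the bound \eqref{e4.26} on the oscillation of $\mathbf{J}$, and the higher integrability $\nabla u\in L^q$ from Proposition \ref{p4.2}, every difference coming from coordinate change is absorbed into a term of the size $\sigma(s)\fint_{\Omega\cap B(\mathbf{0},c s)}|\nabla u|+\sigma(s)[\fint|\boldsymbol{f}-\boldsymbol{f}_0|^q]^{1/q}$. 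The coefficient-freezing error $\fint_{D_s}|A_0\nabla\overline{u}-A_{\mathbf{J}_s^{-1}\overline{\mathbf{J}}}\nabla\overline{u}|^2$ is treated by the matrix identity $\mathbf{F}\mathbf{E}\mathbf{F}^t-\mathbf{H}\mathbf{D}\mathbf{H}^t=\mathbf{F}(\mathbf{E}-\mathbf{D})\mathbf{F}^t+(\mathbf{F}-\mathbf{H})\mathbf{D}\mathbf{F}^t+\mathbf{H}\mathbf{D}(\mathbf{F}^t-\mathbf{H}^t)$ used in \eqref{e4.35}, combined with Assumption \ref{a1.1}(a) on $A$ and Lemma \ref{l2.1} applied to $\psi\in W^1\mathcal{L}^{\sigma(\cdot)}$, which again produces a factor $\sigma(s)$.

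The main obstacle, and the only genuinely new bookkeeping compared to Theorem \ref{t4.1}, is identifying the correct reference vector so that the iteration anchors on $\boldsymbol{U}_s=(0,\ldots,0,(u_{x_n})_{\Omega\cap B(\mathbf{0},s)})$ rather than the full mean $(\nabla u)_{\Omega\cap B(\mathbf{0},s)}$. In the flattened variables the natural anchor from Lemma \ref{l4.4} is $\boldsymbol{V}_s\mathbf{J}_s$, whose last row alone is nonzero. One must verify that, after pulling back by $\Psi^{-1}$ and changing variables, this vector is comparable to $\boldsymbol{U}_s$ up to terms of the form $\sigma(s)\fint|\nabla u|+\sigma(s)[\fint|\boldsymbol{f}-\boldsymbol{f}_0|^q]^{1/q}$; this uses once more the oscillation bound on $\mathbf{J}$ supplied by \eqref{e4.26} together with Proposition \ref{p4.2}. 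Combining all of the above and redefining $\theta$ to absorb the fixed ratio $\Lambda/\lambda$ gives the desired inequality with the stated constants $C_{(n,\mu_0,\theta)}$ and $C_{(n,\mu_0,\theta,R_2)}$.
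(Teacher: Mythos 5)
The approach you outline is indeed the one the paper indicates (the paper itself omits the details and only says to combine \eqref{e3.4}, Proposition \ref{p4.2}, and Lemma \ref{l4.4} and argue as in Theorem \ref{t4.1}), and most of your adaptation—using \eqref{e1.9} to justify subtracting $\boldsymbol f_0$, replacing \eqref{e4.2} by \eqref{e4.6}, and replacing Lemma \ref{l4.3} by Lemma \ref{l4.4}—is on target. However, the step you identify as the ``only genuinely new bookkeeping'' is exactly where your write-up has a gap, and the claimed resolution does not hold as stated.

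You assert that $\boldsymbol V_s\mathbf J_s$ ``whose last row alone is nonzero'' is comparable to $\boldsymbol U_s$ up to errors of size $\sigma(s)\fint|\nabla u|$. Neither part is right. First, $\boldsymbol V_s\mathbf J_s$ is a row vector, not a matrix; computing it explicitly,
\[
\boldsymbol V_s\mathbf J_s=(0,\ldots,0,(\overline u_{z_n})_{D_s})\,\mathbf J_s
=(\overline u_{z_n})_{D_s}\bigl(-(\nabla\psi)_{\Omega\cap B(\mathbf 0,s)},\,1\bigr),
\]
so its first $n-1$ entries are nonzero whenever $(\nabla\psi)_{\Omega\cap B(\mathbf 0,s)}\neq 0$. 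Second, the difference from $\boldsymbol U_s=(0,\ldots,0,(u_{x_n})_{\Omega\cap B(\mathbf 0,s)})$ therefore has tangential part of magnitude $\bigl|(\overline u_{z_n})_{D_s}\bigr|\,\bigl|(\nabla\psi)_{\Omega\cap B(\mathbf 0,s)}\bigr|$, and the bound \eqref{e4.26} controls only the \emph{oscillation} $\fint_{\Omega\cap B_r}|\mathbf J-(\mathbf J)_{\Omega\cap B_r}|$, not the averaged value $(\nabla\psi)_{\Omega\cap B_s}$ itself. In a slanted Lipschitz chart with $\nabla\psi$ bounded away from $0$ near $\mathbf 0$, this discrepancy is of order $\fint|\nabla u|$, not $\sigma(s)\fint|\nabla u|$, so it cannot be absorbed into the stated right-hand side. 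To close the argument you need either to retain the $\mathbf J_s$-twisted anchor $\boldsymbol V_{\delta r}\mathbf J_s$ (pulled back via $\nabla_x u=\nabla_z\overline u\,\mathbf J_s^{-1}\mathbf J(x)$, whose constant part you can take to be $(\overline u_{z_n})_{D_r}(-(\nabla\psi)_{\Omega\cap B_s},1)$) throughout, or to justify a normalization that makes $(\nabla\psi)_{\Omega\cap B_s}$ small; simply invoking \eqref{e4.26} as you do does not supply this. This is the missing ingredient in your proposal.
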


\section{Proofs of Theorem \ref{th1.1} and Corollary \ref{c1.1}}\label{s5}

In this section, we prove Theorem \ref{th1.1} and Corollary \ref{c1.1} by using Theorems
\ref{t4.1} and \ref{t4.2}, Proposition \ref{p4.3}, and Lemma \ref{l2.2}.

\begin{theorem}\label{t5.1}
Let $n\ge2$ and $\Omega\subset{\mathbb{R}^n}$ be a bounded Lipschitz domain.
Assume that the matrix $A$ and the domain $\Omega$ satisfy Assumption \ref{a1.1},
$\boldsymbol{f}\in \mathrm{BMO}_r(\Omega;{\mathbb{R}^n})$, and $u$ is the weak solution to the Neumann problem
\eqref{e1.6}. Then there exists a constant $R_3\in(0,1)$, depending on $n$, $\mu_0$, the function $\sigma$ as in Assumption \ref{a1.1}, and $\mathrm{diam\,}(\Omega)$, such that,
for any $x\in\partial\Omega$ and $R\in(0,R_3]$,
\begin{equation}\label{e5.1}
\sup_{s\in(0,R)}\left[\fint_{\Omega\cap B(x,s)}\left|\nabla u(y)-(\nabla u)_{\Omega\cap B(x,s)}\right|^2\,dy\right]^{\frac{1}{2}}
\le C\|\boldsymbol{f}\|_{\mathrm{BMO}_{r,+}(\Omega;{\mathbb{R}^n})},
\end{equation}
where $C$ is a positive constant independent of $\boldsymbol{f},$ $u,$ and $x$.
\end{theorem}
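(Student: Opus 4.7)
\medskip

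\noindent\textbf{Proof proposal for Theorem \ref{t5.1}.}
The plan is to run a Campanato-type iteration based on the boundary decay estimate from Theorem \ref{t4.1}, with the endpoint cancellation coming from Assumption \ref{a1.1}(a)(i). Fix $x\in\partial\Omega$ and, for $s\in(0,R_2]$, write
\[
\omega(s):=\left[\fint_{\Omega\cap B(x,s)}\left|\nabla u(y)-(\nabla u)_{\Omega\cap B(x,s)}\right|^{2}\,dy\right]^{1/2}.
\]
Theorem \ref{t4.1} yields, for every $\theta\in(0,1)$ and $s\in(0,R_{2}]$,
\[
\omega(\theta s)\le C\theta\,\omega(s)+C_{\theta}\sigma(s)\left[\fint_{\Omega\cap B(x,s)}|\nabla u|+\left(\fint_{\Omega\cap B(x,s)}|\boldsymbol{f}|^{q}\right)^{1/q}\right]+C_{\theta}\|\boldsymbol{f}\|_{\mathrm{BMO}(\Omega;{\mathbb{R}^n})}.
\]
I first choose $\theta=\theta_{0}\in(0,1)$ with $C\theta_{0}\le 1/4$ and then set $s_{k}:=\theta_{0}^{k}R$ for $R\in(0,R_3]$, where $R_3$ is small (to be fixed). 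The strategy is to bound $\omega(s_{k})$ uniformly in $k$ by $\|\boldsymbol{f}\|_{\mathrm{BMO}_{r,+}(\Omega;{\mathbb{R}^n})}$ via induction.

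Next, I convert the average quantities on the right-hand side into the iterates $\omega(s_{j})$ plus logarithmically growing mean values. For $\boldsymbol{f}$, John--Nirenberg combined with Lemma \ref{l2.2} gives
\[
\left(\fint_{\Omega\cap B(x,s_{k})}|\boldsymbol{f}|^{q}\right)^{1/q}\lesssim \bigl|(\boldsymbol{f})_{\Omega\cap B(x,s_{k})}\bigr|+\|\boldsymbol{f}\|_{\mathrm{BMO}(\Omega;{\mathbb{R}^n})}\lesssim\bigl(1+\ln(1/s_{k})\bigr)\|\boldsymbol{f}\|_{\mathrm{BMO}_{r,+}(\Omega;{\mathbb{R}^n})},
\]
using that $|(\boldsymbol{f})_{\Omega}|\lesssim\|\boldsymbol{f}\|_{L^{2}(\Omega)}$. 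For $\nabla u$, the triangle inequality and the standard scale-by-scale telescoping give
\[
\left|(\nabla u)_{\Omega\cap B(x,s_{k})}\right|\lesssim \bigl|(\nabla u)_{\Omega\cap B(x,R_{3})}\bigr|+\sum_{j=0}^{k-1}\omega(s_{j}),
\]
and the first term is controlled by the $L^{2}$ energy estimate for the Neumann problem, hence by $\|\boldsymbol{f}\|_{L^{2}(\Omega;{\mathbb{R}^n})}$. Thus, substituting into the decay inequality,
\[
\omega(s_{k+1})\le \tfrac14\,\omega(s_{k})+C\sigma(s_{k})\sum_{j=0}^{k}\omega(s_{j})+C\bigl(1+\sigma(s_{k})\ln(1/s_{k})\bigr)\|\boldsymbol{f}\|_{\mathrm{BMO}_{r,+}(\Omega;{\mathbb{R}^n})}.
\]

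Now I close the argument by a bootstrap. By Assumption \ref{a1.1}(a)(i), $\sigma(s)\ln(1/s)\to 0^+$ as $s\to 0^{+}$, so I can fix $R_{3}\in(0,R_{2}]$ small enough that $\sigma(s)\ln(1/s)\le 1$ on $(0,R_{3}]$ and, in addition, $\sigma(s_{k})$ decays fast enough that $C\sigma(s_{k})\cdot k\le \theta_{0}^{k/2}$ for all $k\ge 0$ (possible since $\sigma(\theta_{0}^{k}R_{3})\le C_{\sigma}\sigma(\theta_{0}^{k})$ and $\sigma(\theta_{0}^{k})\cdot k\sim \sigma(\theta_{0}^{k})\ln(1/\theta_{0}^{k})\to 0$ by condition (i)). Assume inductively that $\omega(s_{j})\le K\,\|\boldsymbol{f}\|_{\mathrm{BMO}_{r,+}(\Omega;{\mathbb{R}^n})}$ for all $j\le k$ with a constant $K$ to be chosen; plugging into the recursion and using the decay of $\sigma(s_{k})\cdot k$ bounds the right-hand side by $(\tfrac14 K+o(K)+C)\,\|\boldsymbol{f}\|_{\mathrm{BMO}_{r,+}(\Omega;{\mathbb{R}^n})}$, which is $\le K\,\|\boldsymbol{f}\|_{\mathrm{BMO}_{r,+}(\Omega;{\mathbb{R}^n})}$ provided $K$ is chosen large enough. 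Interpolating between dyadic radii $s_{k+1}\le s\le s_{k}$ through a doubling-type comparison for $\omega$ then yields \eqref{e5.1}.

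The main obstacle is the last step: the a priori unbounded logarithmic growth of $|(\nabla u)_{\Omega\cap B(x,s)}|$ across scales, which plugs back into the iteration through the $\sigma(s)\fint|\nabla u|$ term and threatens to blow up. Overcoming it requires precisely the quantitative smallness in Assumption \ref{a1.1}(a)(i) (which is essentially the pointwise multiplier condition on $\mathrm{BMO}(\Omega)$ from Lemma \ref{l2.4}) to cancel the logarithmic factor, together with a careful bootstrap where the unknown $\omega$ appears on both sides but is absorbed at each step thanks to the smallness of $\sigma(s_{k})$ times the number of previous scales.
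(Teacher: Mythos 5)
Your proposal follows the same conceptual route as the paper's proof: use the boundary decay estimate of Theorem \ref{t4.1}, convert the averages of $|\nabla u|$ and $|\boldsymbol f|$ at scale $s$ into a logarithmic factor $\ln(1/s)$ (paid in terms of the oscillation at larger scales and the global $L^2$ energy), and let the quantitative cancellation $\sigma(s)\ln(1/s)\to 0$ from Assumption \ref{a1.1}(a)(i) absorb that logarithm after shrinking $R_3$. The paper packages the logarithmic conversion once and for all via Lemma \ref{l5.1} and then performs a single supremum absorption over $s\in(\varepsilon,R)$ (justified by finiteness from $u\in W^{1,2}$, then $\varepsilon\to0$), whereas you run a scale-by-scale telescoping $|(\nabla u)_{\Omega\cap B(x,s_k)}|\lesssim |(\nabla u)_{\Omega\cap B(x,R_3)}|+\sum_{j<k}\omega(s_j)$ and close the recursion inductively; these are morally equivalent, though the paper's version is a little cleaner and avoids tracking the iterate sum.

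One parenthetical claim in your bootstrap overreaches and should be corrected. You assert that $R_3$ can be chosen so small that $C\sigma(s_k)\cdot k\le \theta_0^{k/2}$ for all $k$. Assumption \ref{a1.1}(a)(i) only gives $\sigma(r)\ln(1/r)\to 0^+$, hence (with $s_k=\theta_0^k R$, so $k\sim\ln(1/s_k)$) only that $\sigma(s_k)\,k$ tends to $0$; it cannot force \emph{exponential} decay for a general admissible $\sigma$ such as $\sigma(r)=(\ln(1/r))^{-2}$, and shrinking $R_3$ does not change this. Fortunately exponential decay is not needed: choosing $R_3$ so that $\sup_{0<s\le R_3}C\,\sigma(s)(1+\ln(1/s))\le 1/4$ yields the uniform smallness $C\sigma(s_k)(k+1)\le 1/4$ for all $k$, and your induction then closes with, say, $K\ge 2C$ since the linear recursion gives $\omega(s_{k+1})\le\frac14K\|\boldsymbol f\|+\frac14K\|\boldsymbol f\|+C\|\boldsymbol f\|$. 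Replacing the exponential-decay parenthetical with this uniform bound makes your argument fully correct, and then the final interpolation between dyadic radii delivers \eqref{e5.1}.
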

To show Theorem \ref{t5.1}, we need the following well-known lemma (see, for instance, \cite[Lemma 5.2]{bc22}).

\begin{lemma}\label{l5.1}
Let $n\ge2$ and $\Omega\subset{\mathbb{R}^n}$ be a bounded Lipschitz domain. Assume that $q\in[1,\infty)$,
$f\in L^q(\Omega)$, and $R\in(0,\mathrm{diam\,}(\Omega)]$ be a given constant. Then there exists
a positive constant $C$, depending only on $n,$ $q$, and the Lipschitz constant of $\Omega$, such that,
for any $x\in\overline{\Omega}$ and $r\in(0,R]$,
\begin{equation*}
\fint_{\Omega\cap B(x,r)}|f(y)|\,dy\le C\ln\left({\frac{R}{r}}\right)\sup_{\rho\in(r,R)}\left[\fint_{\Omega\cap
B(x,\rho)}\left|f(y)-(f)_{\Omega\cap B(x,\rho)}\right|^q\,dy\right]^{\frac{1}{q}}+C(|f|)_{\Omega\cap B(x,R)}.
\end{equation*}
\end{lemma}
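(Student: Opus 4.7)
The plan is to prove Lemma \ref{l5.1} by a standard dyadic telescoping argument, comparing the average of $f$ on $\Omega\cap B(x,r)$ to the average on $\Omega\cap B(x,R)$ through a chain of intermediate averages at dyadic scales, each controlled by the local oscillation of $f$.

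Concretely, I first fix $x\in\overline\Omega$ and $r\in(0,R]$, let $N\in\mathbb{N}$ be the largest integer with $2^{N}r\le R$, and set $r_k:=2^kr$ for $k=0,\dots,N$, together with $r_{N+1}:=R$. Since $\Omega$ is Lipschitz, there is a positive constant $c$, depending only on the Lipschitz constant of $\Omega$ and $n$, such that $|\Omega\cap B(x,\rho)|\ge c\rho^n$ for every $x\in\overline\Omega$ and $\rho\in(0,\mathrm{diam\,}\Omega]$; in particular the measures $|\Omega\cap B(x,r_k)|$ and $|\Omega\cap B(x,r_{k+1})|$ are comparable, and $\Omega\cap B(x,r_k)\subset \Omega\cap B(x,r_{k+1})$. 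Then I estimate, for each $k\in\{0,\dots,N\}$,
\begin{align*}
\left|(f)_{\Omega\cap B(x,r_k)}-(f)_{\Omega\cap B(x,r_{k+1})}\right|
&\le\fint_{\Omega\cap B(x,r_k)}\left|f(y)-(f)_{\Omega\cap B(x,r_{k+1})}\right|dy\\
&\le C\fint_{\Omega\cap B(x,r_{k+1})}\left|f(y)-(f)_{\Omega\cap B(x,r_{k+1})}\right|dy\\
&\le C\left[\fint_{\Omega\cap B(x,r_{k+1})}\left|f(y)-(f)_{\Omega\cap B(x,r_{k+1})}\right|^q dy\right]^{\frac1q},
\end{align*}
where the second line uses the measure-comparability above and the third uses Jensen's (or Hölder's) inequality.

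Summing the telescoping series and using that $N+1\le C\ln(R/r)+C$, I obtain
\begin{equation*}
\left|(f)_{\Omega\cap B(x,r)}-(f)_{\Omega\cap B(x,R)}\right|\le C\ln\left(\frac{R}{r}\right)\sup_{\rho\in(r,R)}\left[\fint_{\Omega\cap B(x,\rho)}\left|f(y)-(f)_{\Omega\cap B(x,\rho)}\right|^q dy\right]^{\frac1q}.
\end{equation*}
Finally, I write
\begin{equation*}
\fint_{\Omega\cap B(x,r)}|f(y)|\,dy\le\fint_{\Omega\cap B(x,r)}\left|f(y)-(f)_{\Omega\cap B(x,r)}\right|dy+\left|(f)_{\Omega\cap B(x,r)}\right|,
\end{equation*}
bound the first summand by the $\rho=r$ term of the supremum (via Jensen), and bound $|(f)_{\Omega\cap B(x,r)}|$ by combining the telescoping inequality above with $|(f)_{\Omega\cap B(x,R)}|\le (|f|)_{\Omega\cap B(x,R)}$. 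Adding these gives the desired inequality.

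There is no real obstacle; the only delicate point is the uniform measure lower bound $|\Omega\cap B(x,\rho)|\gtrsim\rho^n$ for all $x\in\overline\Omega$ and $\rho\le\mathrm{diam\,}\Omega$, which depends only on $n$ and the Lipschitz constant of $\Omega$ and is the standard geometric consequence of the Lipschitz graph representation of $\partial\Omega$ (already invoked, e.g., in the proof of Lemma \ref{l2.2}). All constants produced in the argument depend only on $n$, $q$, and the Lipschitz constant, as required.
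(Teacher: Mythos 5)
The paper does not actually prove this lemma: it is quoted as ``well known'' with a citation to \cite[Lemma 5.2]{bc22}, so there is no internal proof to compare against. Your dyadic telescoping argument is exactly the standard proof of this statement, and the key geometric input you isolate (the measure density $|\Omega\cap B(x,\rho)|\gtrsim\rho^n$ for $x\in\overline\Omega$, coming from the Lipschitz character) is the right one; each telescoping step and the use of Jensen's inequality are correct.

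Two small points need repair, both in the same regime. First, your counting gives $N+1\le\log_2(R/r)+1$, not $N+1\le C\ln(R/r)$, so the displayed intermediate inequality
$\left|(f)_{\Omega\cap B(x,r)}-(f)_{\Omega\cap B(x,R)}\right|\le C\ln(R/r)\sup_\rho(\cdots)$
is literally false when $r$ is close to $R$ (the right-hand side tends to $0$ while the left-hand side need not). Second, the radii $\rho=r$ (used to bound the zeroth oscillation term) and $\rho=R$ (the last link of the chain) do not lie in the open interval $(r,R)$ over which the supremum is taken, and these stray terms carry no $\ln(R/r)$ prefactor. Both issues disappear with a one-line case split: if $r\ge R/2$, the lemma is trivial, since measure density gives $\fint_{\Omega\cap B(x,r)}|f|\le C(|f|)_{\Omega\cap B(x,R)}$; if $r<R/2$, then $1\le\ln(R/r)/\ln 2$, so every standalone supremum term may be absorbed into $C\ln(R/r)\sup_\rho(\cdots)$, the zeroth term can be compared with the oscillation at $\rho=2r\in(r,R)$, and the final link from $r_N$ to $R$ can be bounded directly by $C(|f|)_{\Omega\cap B(x,R)}$ (or one invokes continuity in $\rho$ of the $L^q$ oscillation to pass to the closed interval). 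With that adjustment the proof is complete and all constants depend only on $n$, $q$, and the Lipschitz constant, as required.
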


Now, we prove Theorem \ref{t5.1} by using Proposition \ref{p4.1}, Theorem \ref{t4.1},
and Lemma \ref{l5.1}.

\begin{proof}[Proof of Theorem \ref{t5.1}]
Let $x\in\partial\Omega$ and $\theta\in(0,1)$ be determined later. Assume that $R_2\in(0,\infty)$
is the same as in Theorem \ref{t4.1}. By Theorem \ref{t4.1}, there exists a positive constant
$C_3$, independent of $x,$ $u,$ $\boldsymbol{f}$, and $\theta$, such that, for any $s\in(0,R_2]$,
\begin{align}\label{e5.2}
&\left[\fint_{\Omega\cap B(x,\theta s)}\left|\nabla u(y)-(\nabla u)_{\Omega\cap B(x,\theta s)}\right|^2\,dy\right]^{\frac{1}{2}}\\ \notag
&\quad\le C_{(n,\mu_0,\theta)}\sigma(s)\fint_{\Omega\cap B(x,s)}|\nabla u(y)|\,dy
+ C_{(n,\mu_0,\theta)}\sigma(s)\left[\fint_{\Omega\cap B(x,s)}|\boldsymbol{f}(y)|^{q}\,dy\right]^{\frac{1}{q}}\\ \notag
&\qquad+ C_{(n,\mu_0,\theta)}\|\boldsymbol{f}\|_{\mathrm{BMO}(\Omega;{\mathbb{R}^n})}
+C_3\theta\left[\fint_{\Omega\cap B(x,s)}\left|\nabla u(y)-(\nabla u)_{\Omega\cap B(x,s)}\right|^2\,dy\right]^{\frac{1}{2}},
\end{align}
where $C_{(n,\mu_0,\theta)}$ is a positive constant depending on $n$, $\mu_0$, and $\theta$ and $q\in(2,\infty)$ is the same as in
Theorem \ref{t4.1}.

Take $\theta\in(0,1)$ such that
\begin{align}\label{e5.3}
C_3\theta\le1/2.
\end{align}
From Lemma \ref{l2.1}, it follows that, for any $s\in(0,R_2]$,
\begin{align}\label{e5.4}
\left[\fint_{\Omega\cap B(x,s)}|\boldsymbol{f}(y)|^{q}\,dy\right]^{\frac{1}{q}}&\le
\left[\fint_{\Omega\cap B(x,s)}\left|\boldsymbol{f}(y)-(\boldsymbol{f})_{\Omega\cap B(x,s)}\right|^{q}\,dy\right]^{\frac{1}{q}}
+\fint_{\Omega\cap B(x,s)}|\boldsymbol{f}(y)|\,dy\\ \notag
&\lesssim\|\boldsymbol{f}\|_{\mathrm{BMO}(\Omega;{\mathbb{R}^n})}+\fint_{\Omega\cap B(x,s)}|\boldsymbol{f}(y)|\,dy.
\end{align}
Moreover, by Lemma \ref{l5.1}, there exists a positive constant $C_4$ such that,
for any given $g\in L^2(\Omega)$, any $R\in(0,\min\{R_2,\mathrm{diam\,}(\Omega),1\}]$, and $s\in(0,R)$,
\begin{align*}
\fint_{\Omega\cap B(x,s)}|g(y)|\,dy&\le C_4\ln\left({\frac{1}{s}}\right)
\sup_{\rho\in(s,R)}\left[\fint_{\Omega\cap B(x,\rho)}\left|g(y)-(g)_{\Omega\cap B(x,\rho)}\right|^2\,
dy\right]^{\frac{1}{2}}+C_4(|g|)_{\Omega\cap B(x,R)},
\end{align*}
which, combined with \eqref{e5.2} and \eqref{e5.4}, further implies that, for any $R\in(0,\min\{R_2,
\mathrm{diam\,}(\Omega),1\}]$ and $s\in(0,R)$,
\begin{align}\label{e5.5}
&\left[\fint_{\Omega\cap B(x,\theta s)}\left|\nabla u(y)-(\nabla u)_{\Omega\cap B(x,\theta s)}\right|^2\,dy\right]^{\frac{1}{2}}\\ \notag
&\quad\le C_4C_{(n,\mu_0,\theta)}\sigma(s)\ln\left({\frac{1}{s}}\right)\sup_{\rho\in(s,R)}\left[\fint_{\Omega\cap B(x,\rho)}
\left|\nabla u(y)-(\nabla u)_{\Omega\cap B(x,\rho)}\right|^2\,dy\right]^{\frac{1}{2}}\\ \notag
&\qquad+C_4C_{(n,\mu_0,\theta)}\sigma(s)\fint_{\Omega\cap B(x,R)}|\nabla u(y)|\,dy
+ C_{(n,\mu_0,\theta)}\|\boldsymbol{f}\|_{\mathrm{BMO}(\Omega;{\mathbb{R}^n})}\\ \notag
&\qquad+C_4C_{(n,\mu_0,\theta)}\sigma(s)\ln\left({\frac{1}{s}}\right)\sup_{\rho\in(s,R)}\left[\fint_{\Omega\cap B(x,\rho)}
\left|\boldsymbol{f}(y)-(\boldsymbol{f})_{\Omega\cap B(x,\rho)}\right|^2\,dy\right]^{\frac{1}{2}}\\ \notag
&\qquad+C_{(n,\mu_0,\theta)}\sigma(s)\fint_{\Omega\cap B(x,R)}|\boldsymbol{f}(y)|\,dy+C_3\theta\left[\fint_{\Omega\cap B(x,s)}\left|\nabla u(y)-(\nabla u)_{\Omega\cap B(x,s)}\right|^2\,dy\right]^{\frac{1}{2}}.
\end{align}
From (i) of Assumption \ref{a1.1}(a), we deduce that there exists a positive constant $\widetilde{R}\in(0,1)$, depending on $n$, $\mu_0$, $\theta$, and the function
$\sigma$, such that
\begin{align}\label{e5.6}
\sup_{s\in(0,\widetilde{R})}C_4C_{(n,\mu_0,\theta)}\sigma(s)\ln\left({\frac{1}{s}}\right)\le\frac14.
\end{align}
Let $R_3:=\min\{R_2,\widetilde{R},\mathrm{diam\,}(\Omega)\}$. By \eqref{e5.3}, \eqref{e5.5}, \eqref{e5.6}, and Lemma \ref{l2.2},
for any given $R\in(0,R_3]$ and $s\in(0,R)$,
\begin{align*}
&\left[\fint_{\Omega\cap B(x,\theta s)}\left|\nabla u(y)-(\nabla u)_{\Omega\cap B(x,\theta s)}\right|^2\,dy\right]^{\frac{1}{2}}\\ \notag
&\quad\le\frac{3}{4}\sup_{\rho\in(s,R)}\left[\fint_{\Omega\cap B(x,\rho)}
\left|\nabla u(y)-(\nabla u)_{\Omega\cap B(x,\rho)}\right|^2\,dy\right]^{\frac{1}{2}}\\ \notag
&\qquad+C\fint_{\Omega\cap B(x,R)}|\nabla u(y)|\,dy+C\fint_{\Omega\cap B(x,R)}\left|\boldsymbol{f}(y)\right|\,dy
+C\|\boldsymbol{f}\|_{\mathrm{BMO}(\Omega;{\mathbb{R}^n})}\\ \notag
&\quad\le\frac{3}{4}\sup_{\rho\in(s,R)}\left[\fint_{\Omega\cap B(x,\rho)}\left|\nabla u(y)-(\nabla u)_{\Omega\cap B(x,\rho)}\right|^2\,dy
\right]^{\frac{1}{2}}\\
&\qquad+C\fint_{\Omega\cap B(x,R)}|\nabla u(y)|\,dy+C\|\boldsymbol{f}\|_{\mathrm{BMO}_{r,+}(\Omega;{\mathbb{R}^n})},
\end{align*}
which, together with H\"older's inequality and Remark \ref{r1.1}(i), further implies that, for any given $\varepsilon\in(0,\theta R/2)$,
\begin{align}\label{e5.7}
&\sup_{s\in(\varepsilon,\frac{\theta R}{2})}\left[\fint_{\Omega\cap B(x,s)}
\left|\nabla u(y)-(\nabla u)_{\Omega\cap B(x,s)}\right|^2\,dy\right]^{\frac{1}{2}}\\ \notag
&\quad\le\frac{3}{4}\sup_{\rho\in(\frac{\varepsilon}{\theta},R)}\left[\fint_{\Omega\cap B(x,\rho)}\left|\nabla u(y)
-(\nabla u)_{\Omega\cap B(x,\rho)}\right|^2\,dy\right]^{\frac{1}{2}}+C\|\boldsymbol{f}\|_{\mathrm{BMO}_{r,+}(\Omega;{\mathbb{R}^n})}.
\end{align}
Moreover, from Remark \ref{r1.1}(i), it follows that
\begin{align}\label{e5.8}
&\sup_{s\in(\frac{\theta R}{2},R)}\left[\fint_{\Omega\cap B(x,s)}\left|\nabla u(y)-
(\nabla u)_{\Omega\cap B(x,s)}\right|^2\,dy\right]^{\frac{1}{2}}\\ \nonumber
&\quad\le C\sup_{s\in(\frac{\theta R}{2},R)}
\left[\fint_{\Omega\cap B(x,s)}|\nabla u(y)|^2\,dy\right]^{\frac{1}{2}}
\le C\left[\fint_{\Omega\cap B(x,R)}|\nabla u(y)|^2\,dy\right]^{\frac{1}{2}}\le C\|\boldsymbol{f}\|_{L^2(\Omega;{\mathbb{R}^n})}.
\end{align}
Then, by \eqref{e5.7} and \eqref{e5.8}, for any given $\varepsilon\in(0,\theta R/2)$,
\begin{align*}
&\sup_{s\in(\varepsilon,R)}\left[\fint_{\Omega\cap B(x,s)}\left|\nabla u(y)-(\nabla u)_{\Omega\cap B(x,s)}\right|^2\,dy\right]^{\frac{1}{2}}\\ \notag
&\quad\le\frac{3}{4}\sup_{\rho\in(\frac{\varepsilon}{\theta},R)}\left[\fint_{\Omega\cap B(x,\rho)}\left|\nabla u(y)
-(\nabla u)_{\Omega\cap B(x,\rho)}\right|^2\,dy\right]^{\frac{1}{2}}+C\|\boldsymbol{f}\|_{\mathrm{BMO}_{r,+}(\Omega;{\mathbb{R}^n})},
\end{align*}
which further implies that, for any given $\varepsilon\in(0,\theta R/2)$,
\begin{align}\label{e5.9}
\sup_{s\in(\varepsilon,R)}\left[\fint_{\Omega\cap B(x,s)}\left|\nabla u(y)-(\nabla u)_{\Omega\cap B(x,s)}\right|^2\,dy\right]^{\frac{1}{2}}
\lesssim\|\boldsymbol{f}\|_{\mathrm{BMO}_{r,+}(\Omega;{\mathbb{R}^n})}.
\end{align}
Then letting $\varepsilon\to0$ in \eqref{e5.9}, we conclude that \eqref{e5.1} holds. This finishes the proof of Theorem \ref{t5.1}.
\end{proof}

Applying Theorem \ref{t4.2} and an argument similar to that used in the proof of
Theorem \ref{t5.1}, we obtain the following estimate for the Dirichlet problem \eqref{e1.8}; we omit its proof.

\begin{theorem}\label{t5.2}
Let $n\ge2$ and $\Omega\subset{\mathbb{R}^n}$ be a bounded Lipschitz domain. Assume that the matrix $A$ and the domain $\Omega$ satisfy Assumption \ref{a1.1},
$\boldsymbol{f}\in \mathrm{BMO}_r(\Omega;{\mathbb{R}^n})$, and $u$ is the weak solution to the Dirichlet problem \eqref{e1.8}.
Then there exists a constant $R_4\in(0,1)$, depending on $n$, $\mu_0$, the function $\sigma$ in Assumption \ref{a1.1}, and $\mathrm{diam\,}(\Omega)$,
such that, for any $x\in\partial\Omega$ and $R\in(0,R_4]$,
\begin{equation*}
\sup_{s\in(0,R)}\left[\fint_{\Omega\cap B(x,s)}\left|\nabla u(y)-(\nabla u)_{\Omega\cap B(x,s)}\right|^2\,dy\right]^{\frac{1}{2}}
\le C\|\boldsymbol{f}\|_{\mathrm{BMO}_{r}(\Omega;{\mathbb{R}^n})},
\end{equation*}
where $C$ is a positive constant independent of $\boldsymbol{f},$ $u$, and $x$.
\end{theorem}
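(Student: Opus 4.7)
The plan is to follow the proof of Theorem \ref{t5.1} line by line, with Theorem \ref{t4.2} in place of Theorem \ref{t4.1}. Two features that are special to the Dirichlet problem --- both stemming from the fact, recorded in Remark \ref{r1.1}(i), that the equation is unchanged when $\boldsymbol{f}$ is shifted by a constant vector --- allow the right-hand side to be bounded by $\|\boldsymbol{f}\|_{\mathrm{BMO}_r(\Omega;{\mathbb{R}^n})}$ alone, with no $L^2$-norm contribution surviving on the right.

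After translating so that $x=\mathbf{0}$, set $\Phi(r):=[\fint_{\Omega\cap B(\mathbf{0},r)}|\nabla u(y)-\boldsymbol{U}_r|^2\,dy]^{1/2}$, so the mean oscillation to be controlled is bounded by $2\Phi(r)$ (since $\boldsymbol{U}_r$ is a fixed vector and the mean minimizes the $L^2$ deviation). Theorem \ref{t4.2}, after taking the infimum over $\boldsymbol{f}_0\in{\mathbb{R}^n}$, yields
$$\Phi(\theta s)\le C\theta\,\Phi(s)+C_{(n,\mu_0,\theta)}\sigma(s)\fint_{\Omega\cap B(\mathbf{0},s)}|\nabla u|\,dy+C_{(n,\mu_0,\theta,R_2)}\inf_{\boldsymbol{f}_0\in{\mathbb{R}^n}}\left[\fint_{\Omega\cap B(\mathbf{0},s)}|\boldsymbol{f}-\boldsymbol{f}_0|^q\,dy\right]^{1/q}.$$
I would fix $\theta$ so that $C\theta\le 1/2$. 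Using Lemma \ref{l2.2} to extend $\boldsymbol{f}$ to a function in $\mathrm{BMO}({\mathbb{R}^n};{\mathbb{R}^n})$ with comparable norm and John--Nirenberg on $B(\mathbf{0},s)$, together with the Lipschitz property $|\Omega\cap B|\sim|B|$, bounds the final infimum by $C\|\boldsymbol{f}\|_{\mathrm{BMO}_r(\Omega;{\mathbb{R}^n})}$. This is the first and decisive departure from the Neumann bookkeeping, where Theorem \ref{t4.1} produced the analogous term directly as $\|\boldsymbol{f}\|_{\mathrm{BMO}(\Omega;{\mathbb{R}^n})}$.

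Next, I would insert Lemma \ref{l5.1} with $q=1$ to dominate $\fint|\nabla u|$ by $C\ln(1/s)\sup_{\rho\in(s,R)}2\Phi(\rho)+C(|\nabla u|)_{\Omega\cap B(\mathbf{0},R)}$, and invoke item (i) of Assumption \ref{a1.1}(a) to choose $R_4\in(0,1)$ so small that $C_{(n,\mu_0,\theta)}\sigma(s)\ln(1/s)\le 1/4$ for $s\in(0,R_4)$. Substituting and following \eqref{e5.7} and \eqref{e5.8} verbatim gives
$$\sup_{s\in(\varepsilon,R)}\Phi(s)\le\tfrac34\sup_{\rho\in(\varepsilon/\theta,R)}\Phi(\rho)+C(|\nabla u|)_{\Omega\cap B(\mathbf{0},R)}+C\|\boldsymbol{f}\|_{\mathrm{BMO}_r(\Omega;{\mathbb{R}^n})}.$$
The ``initial scale'' contribution $\sup_{s\in(\theta R/2,R)}\Phi(s)\lesssim[\fint_{\Omega\cap B(\mathbf{0},R)}|\nabla u|^2\,dy]^{1/2}$ and the stray average $(|\nabla u|)_{\Omega\cap B(\mathbf{0},R)}$ are both absorbed by the second Dirichlet-specific ingredient: by Remark \ref{r1.1}(i), $u$ also solves \eqref{e1.8} with datum $\boldsymbol{f}-(\boldsymbol{f})_\Omega$, whence the $W^{1,2}$ energy estimate gives $\|\nabla u\|_{L^2(\Omega;{\mathbb{R}^n})}\lesssim\|\boldsymbol{f}-(\boldsymbol{f})_\Omega\|_{L^2(\Omega;{\mathbb{R}^n})}\lesssim\|\boldsymbol{f}\|_{\mathrm{BMO}_r(\Omega;{\mathbb{R}^n})}$ by another application of John--Nirenberg. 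Letting $\varepsilon\to 0$ and iterating the geometric factor $3/4$ as at the end of the proof of Theorem \ref{t5.1} yields the claim.

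The main obstacle is bookkeeping rather than analytic: one must verify that the infimum over $\boldsymbol{f}_0$ in Theorem \ref{t4.2} and the replacement $\boldsymbol{f}\mapsto\boldsymbol{f}-(\boldsymbol{f})_\Omega$ in the $L^2$ energy estimate combine in such a way that \emph{no} $\|\boldsymbol{f}\|_{L^2(\Omega;{\mathbb{R}^n})}$ term ever appears on the right-hand side --- a bound that is available only for the Dirichlet problem. Once this is organized, every other step, namely the Gehring improvement of Proposition \ref{p4.2}, the flattening machinery of Subsection \ref{s4.2}, the interior decay, and the iteration across scales, is inherited mechanically from the Neumann argument.
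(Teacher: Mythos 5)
Your proposal is correct and follows the same route the paper indicates (mimic the proof of Theorem \ref{t5.1} with Theorem \ref{t4.2} in place of Theorem \ref{t4.1}). You correctly isolate the two Dirichlet-specific facts --- that the infimum over $\boldsymbol{f}_0$ in Theorem \ref{t4.2}, together with Lemma \ref{l2.1} (or John--Nirenberg), lets the datum term be bounded by $\|\boldsymbol{f}\|_{\mathrm{BMO}_r(\Omega;\mathbb{R}^n)}$ alone, and that Remark \ref{r1.1}(i) allows the $L^2$-energy estimate to be run with $\boldsymbol{f}-(\boldsymbol{f})_\Omega$ so that no $\|\boldsymbol{f}\|_{L^2}$ term survives --- which are exactly what upgrade the right-hand side from $\mathrm{BMO}_{r,+}$ to $\mathrm{BMO}_r$ in this case.
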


Applying Proposition \ref{p4.3}, Lemma \ref{l5.1}, and an argument similar to that used in the proof of
Theorem \ref{t5.1}, we obtain the following proposition; we omit the proof.

\begin{proposition}\label{p5.1}
Let $n\ge2$ and $\Omega\subset{\mathbb{R}^n}$ be a bounded Lipschitz domain.
Assume that the matrix $A$ and the domain $\Omega$ satisfy Assumption \ref{a1.1},
$\boldsymbol{f}\in \mathrm{BMO}_r(\Omega;{\mathbb{R}^n})$, and $u$ is the weak solution to the Neumann problem \eqref{e1.6}
or the Dirichlet problem \eqref{e1.8}. Then there exist positive constants $\delta_0,$ $R_5\in(0,1)$,
depending on $n$, $\mu_0$, the function $\sigma$ in Assumption \ref{a1.1}, and $\mathrm{diam\,}(\Omega)$, such that, for
any given $R\in(0,R_5]$ and any $x\in\Omega$ and $r\in(0,R)$ satisfying $B(x,2r)\subset\Omega$,
\begin{align}\label{e5.10}
&\left[\fint_{B(x,\delta_0r)}\left|\nabla u(y)-(\nabla u)_{B(x,\delta_0r)}\right|^2\,dy\right]^{\frac{1}{2}}\\ \nonumber
&\quad\le\frac{1}{2}\sup_{s\in(0,R)}\left[\fint_{\Omega\cap B(x,s)}\left|\nabla u(y)-(\nabla u)_{\Omega\cap
B(x,s)}\right|^2\,dy\right]^{\frac{1}{2}}+C\|\boldsymbol{f}\|_{\mathrm{BMO}_{r,+}(\Omega;{\mathbb{R}^n})}<\infty,
\end{align}
where $C$ is a positive constant independent of $\boldsymbol{f}$, $u$, and $x$.
\end{proposition}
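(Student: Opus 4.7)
The plan is to mimic the iteration strategy used in the proof of Theorem \ref{t5.1}, but to drive it with the interior decay estimate Proposition \ref{p4.3} instead of the boundary decay estimates. Fix $x \in \Omega$ and $r \in (0, R)$ with $B(x, 2r) \subset \Omega$, and apply Proposition \ref{p4.3} to the ball $B := B(x, r)$ with a free parameter $\theta \in (0, 1)$ (to be chosen small at the end) and with $\boldsymbol{f}_0 := (\boldsymbol{f})_{B(x, 2r)}$. This yields, for some $q \in (2, \infty)$ furnished by the proposition,
\begin{align*}
&\left[\fint_{B(x, \theta r)} |\nabla u - (\nabla u)_{B(x, \theta r)}|^2 \, dy\right]^{1/2} \\
&\quad\le C\theta \left[\fint_{B(x, 2r)} |\nabla u - (\nabla u)_{B(x, 2r)}|^2 \, dy\right]^{1/2} + C_{(\theta)} \sigma(r) \fint_{B(x, 2r)} |\nabla u| \, dy + C_{(\theta)} \|\boldsymbol{f}\|_{\mathrm{BMO}(\Omega; \mathbb{R}^n)},
\end{align*}
where the $L^q$ oscillation of $\boldsymbol{f}$ on $B(x, 2r)$ has been absorbed into the full BMO norm via Lemma \ref{l2.1} with $\omega \equiv 1$ and Lemma \ref{l2.2}.

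The next step is to apply Lemma \ref{l5.1} with $f := |\nabla u|$, exponent $2$, and the radius $2r$ in place of $r$. Using that $B(x, 2r) = \Omega \cap B(x, 2r)$, this gives
\begin{align*}
\fint_{B(x, 2r)} |\nabla u| \, dy &\le C \ln\!\left(\frac{R}{2r}\right) \sup_{\rho \in (2r, R)} \left[\fint_{\Omega \cap B(x, \rho)} |\nabla u - (\nabla u)_{\Omega \cap B(x, \rho)}|^2 \, dy\right]^{1/2} + C \left(|\nabla u|\right)_{\Omega \cap B(x, R)}.
\end{align*}
The last term is dominated by $\|\nabla u\|_{L^2(\Omega; \mathbb{R}^n)}$, which by the $W^{1,2}$ energy estimate recalled in Remark \ref{r1.1}(i) is bounded by a multiple of $\|\boldsymbol{f}\|_{L^2(\Omega; \mathbb{R}^n)} \le \|\boldsymbol{f}\|_{\mathrm{BMO}_{r,+}(\Omega; \mathbb{R}^n)}$. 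Likewise, the first oscillation term in the previous display is trivially controlled by the target supremum over $s \in (0, R)$.

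Combining these estimates produces an inequality of the shape
\begin{align*}
&\left[\fint_{B(x, \theta r)} |\nabla u - (\nabla u)_{B(x, \theta r)}|^2 \, dy\right]^{1/2} \\
&\quad\le \left[C\theta + C_{(\theta)} \sigma(r) \ln\!\left(\frac{R}{r}\right)\right] \sup_{s \in (0, R)} \left[\fint_{\Omega \cap B(x, s)} |\nabla u - (\nabla u)_{\Omega \cap B(x, s)}|^2 \, dy\right]^{1/2} + C_{(\theta)} \|\boldsymbol{f}\|_{\mathrm{BMO}_{r,+}(\Omega; \mathbb{R}^n)}.
\end{align*}
To close the estimate, first choose $\delta_0 := \theta \in (0, 1)$ small enough that $C\theta \le 1/4$, and then invoke condition (i) of Assumption \ref{a1.1}(a), which asserts $\sigma(r)\ln(1/r) \to 0$ as $r \to 0^+$, to pick $R_5 \in (0, \min\{R_0, 1\}]$ (depending on $n$, $\mu_0$, $\sigma$, and $\mathrm{diam}(\Omega)$) so that $C_{(\theta)} \sigma(r) \ln(R_5/r) \le 1/4$ for every $r \in (0, R_5)$. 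The finiteness of the right-hand side of \eqref{e5.10} is then automatic from $\nabla u \in L^2(\Omega; \mathbb{R}^n)$.

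The only real obstacle is bookkeeping: one must verify that the constant $C_{(\theta)}$ remains bounded by quantities depending only on the allowed data once $\theta = \delta_0$ is fixed, and then arrange the two smallness conditions consecutively (first in $\theta$, then in $R_5$) so that their sum is at most $1/2$. This is precisely the trick used in the corresponding place in the proof of Theorem \ref{t5.1}, so no new ideas are required beyond those already developed in Section \ref{s4}.
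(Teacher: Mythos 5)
Your proposal follows the exact route the paper prescribes (it states that Proposition \ref{p5.1} follows from Proposition \ref{p4.3}, Lemma \ref{l5.1}, and an argument as in Theorem \ref{t5.1}), and the core ideas are right: apply the interior decay estimate from Proposition \ref{p4.3} with $\boldsymbol{f}_0:=(\boldsymbol{f})_{2B}$, absorb the $L^q$-oscillation of $\boldsymbol{f}$ into $\|\boldsymbol{f}\|_{\mathrm{BMO}}$ via Lemmas \ref{l2.1} (with $\omega\equiv1$) and \ref{l2.2}, convert $\fint|\nabla u|$ into the oscillation supremum plus the energy term via Lemma \ref{l5.1}, and then fix $\theta$ small first and $R_5$ small second using Assumption \ref{a1.1}(a)(i).

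One step you dismiss as bookkeeping is a genuine (though easily fixed) gap. You apply Proposition \ref{p4.3} to $B=B(x,r)$, which produces integrals over $2B=B(x,2r)$ on the right, and you then (a) place the term $C\theta\bigl[\fint_{B(x,2r)}|\nabla u-(\nabla u)_{B(x,2r)}|^2\bigr]^{1/2}$ inside $\sup_{s\in(0,R)}$, and (b) feed the radius $2r$ into Lemma \ref{l5.1}. Both require $2r<R$, but the hypothesis only provides $r<R$; if $r\in[R/2,R)$ then $2r\ge R$, the first inclusion fails and the factor $\ln(R/2r)$ from Lemma \ref{l5.1} is nonpositive. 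The clean repair, consistent with the statement's use of $\delta_0 r$ rather than $\theta r$, is to apply Proposition \ref{p4.3} to the ball $B(x,r/2)$ (so $2B=B(x,r)\subset\Omega$, the right side involves $\fint_{B(x,r)}$ with $r<R$, $\sigma(r/2)\sim\sigma(r)$ by Assumption \ref{a1.1}(a)(ii), and $\delta_0:=\theta/2$); alternatively, dispose of the range $r\in[R/2,R)$ outright via $\bigl[\fint_{B(x,\delta_0 r)}|\nabla u-(\nabla u)_{B(x,\delta_0 r)}|^2\bigr]^{1/2}\lesssim(\delta_0R)^{-n/2}\|\nabla u\|_{L^2(\Omega;\mathbb{R}^n)}\lesssim\|\boldsymbol{f}\|_{\mathrm{BMO}_{r,+}(\Omega;\mathbb{R}^n)}$, in the spirit of \eqref{e5.8}. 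A smaller point: the assertion that finiteness of the right-hand side of \eqref{e5.10} is "automatic from $\nabla u\in L^2$" is not accurate as stated, since $L^2$ membership alone does not bound the supremum over small $s$ of the mean oscillation at a fixed center; the finiteness is ultimately secured by the $\varepsilon$-truncation device of \eqref{e5.7}--\eqref{e5.9} (or by first working with smooth data). This does not affect the validity of the inequality itself.
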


Now, we show Theorem \ref{th1.1} by using Lemma \ref{l2.3},
Theorems \ref{t5.1} and \ref{t5.2}, and Proposition \ref{p5.1}.

\begin{proof}[Proof of Theorem \ref{th1.1}]
We only give the proof of \eqref{e1.10} in the case of the Neumann problem because the proof
in the case of the Dirichlet problem is similar.  Let $\boldsymbol{f}\in\mathrm{BMO}_{r}(\Omega;{\mathbb{R}^n})$,
$u$ be the weak solution to the Neumann problem \eqref{e1.6}, and $R:=\min\{R_3, R_5\}$, where
$R_3$ is the same as in Theorem \ref{t5.1} and $R_5$ the same as in Proposition \ref{p5.1}.
Without loss of generality, we may assume that $\int_\Omega u\,dx=0$.
Take $c_0\in(0,\delta_0/16)$, where $\delta_0$ is the same as in Proposition \ref{p5.1}.

From H\"older's inequality, Remark \ref{r1.1}, and the definition of $\|\boldsymbol{f}\|_{
\mathrm{BMO}_{r,+}(\Omega;{\mathbb{R}^n})}$, we deduce that
\begin{align}\label{e5.12}
&\sup_{x\in\Omega}\sup_{r\in[c_0R,\mathrm{diam\,}(\Omega))}\left[\fint_{\Omega\cap B(x,r)}\left|\nabla u(y)-
(\nabla u)_{\Omega\cap B(x,r)}\right|^2\,dy\right]^{1/2}\\ \notag
&\quad\lesssim\left[\int_{\Omega}|\nabla u(y)|^2\,dy\right]^{1/2}
\lesssim\|\boldsymbol{f}\|_{\mathrm{BMO}_{r,+}(\Omega;{\mathbb{R}^n})}.
\end{align}
Now, assume that $x\in\Omega$ and $r\in(0,c_0R)$. If $B(x,\frac{2}{\delta_0}r)\cap\partial\Omega\neq\emptyset$,
then there exists $x_0\in\partial\Omega$ such that $B(x,r)\subset B(x_0,\frac{8}{\delta_0}r)$.
By the assumptions $c_0<\frac{\delta_0}{16}$ and $r\in(0,c_0R)$, we have $\frac{8}{\delta_0}r<\frac{R}{2}$.
Then, from \eqref{e5.1}, it follows that, when $B(x,\frac{2}{\delta_0}r)
\cap\partial\Omega\neq\emptyset$,
\begin{align}\label{e5.13}
&\left[\fint_{\Omega\cap B(x,r)}\left|\nabla u(y)-(\nabla u)_{\Omega\cap B(x,r)}\right|^2\,dy\right]^{1/2}\\ \notag
&\quad\lesssim\left[\fint_{\Omega\cap B(x_0,\frac{8}{\delta_0}r)}\left|\nabla u(y)-(\nabla u)_{\Omega\cap
B(x_0,\frac{8}{\delta_0}r)}\right|^2\,dy\right]^{1/2}\lesssim\|\boldsymbol{f}\|_{\mathrm{BMO}_{r,+}(\Omega;{\mathbb{R}^n})}.
\end{align}
On the other hand, if $B(x,\frac{2}{\delta_0}r)\subset\Omega$, then, by \eqref{e5.10},
\begin{align*}
&\left[\fint_{B(x,r)}|\nabla u(y)-(\nabla u)_{B(x,r)}|^2\,dy\right]^{1/2}\\ \notag
&\quad\le\frac{1}{2}\sup_{s\in(0,R)}\left[\fint_{\Omega\cap B(x,s)}\left|\nabla u(y)-(\nabla u)_{\Omega\cap
B(x,s)}\right|^2\,dy\right]^{\frac{1}{2}}+C\|\boldsymbol{f}\|_{\mathrm{BMO}_{r,+}(\Omega;{\mathbb{R}^n})}<\infty,
\end{align*}
which, combined with \eqref{e5.12} and \eqref{e5.13}, further implies that
there exists a positive constant $C$ independent of $u$ and $\boldsymbol{f}$ such that
\begin{align*}
&\sup_{x\in\Omega}\sup_{r\in(0,\mathrm{diam\,}(\Omega))}
\left[\fint_{\Omega\cap B(x,r)}\left|\nabla u(y)-(\nabla u)_{\Omega\cap B(x,r)}\right|^2\,dy\right]^{1/2}\\
&\quad\le\frac{1}{2}\sup_{x\in\Omega}\sup_{r\in(0,\mathrm{diam\,}(\Omega))}
\left[\fint_{\Omega\cap B(x,r)}\left|\nabla u(y)-(\nabla u)_{\Omega\cap B(x,r)}\right|^2\,dy\right]^{1/2}
+C\|\boldsymbol{f}\|_{\mathrm{BMO}_{r,+}(\Omega;{\mathbb{R}^n})}<\infty.
\end{align*}
From this and Lemmas \ref{l2.1} and \ref{l2.2}, we deduce that $\nabla u\in\mathrm{BMO}_r(\Omega;{\mathbb{R}^n})$
and \eqref{e1.10} holds. This finishes the proof of Theorem \ref{th1.1}.
\end{proof}

\begin{proof}[Proof of Corollary \ref{c1.1}]
Here we only give the proof in the case of the Neumann problem
because the proof in the case of the Dirichlet problem is similar.

We first assume that $\boldsymbol{f}\in H^1_z(\Omega;{\mathbb{R}^n})\cap L^2(\Omega;{\mathbb{R}^n})$.
Let $u\in W^{1,2}(\Omega)$ be the weak solution to the Neumann problem \eqref{e1.6}.
Without loss of generality, we may assume that $\int_\Omega u\,dx=0$. Let $v$ be the weak solution
to the Neumann problem \eqref{e1.6} with the coefficient matrix $A^t$ and the right-hand side
$\boldsymbol{g}\in L^\infty(\Omega;{\mathbb{R}^n})$. Here $A^t$ denotes the transpose of the matrix $A$. Then
\begin{align}\label{e5.19}
\int_{\Omega}\boldsymbol{f}(x)\cdot\nabla v(x)\,dx&=\int_{\Omega}A(x)\nabla u(x)\cdot \nabla v(x)\,dx\\ \nonumber
&=\int_{\Omega}A^t(x)\nabla v(x)\cdot\nabla u(x)\,dx
=\int_{\Omega}\boldsymbol{g}(x)\cdot\nabla u(x)\,dx.
\end{align}
Notice that the matrix $A$ satisfies Assumption \ref{a1.1} if and only if $A^t$ satisfies the same Assumption \ref{a1.1}.
Therefore, by Theorem \ref{th1.1} and the obvious fact that $L^\infty(\Omega)\hookrightarrow\mathrm{BMO}_r(\Omega)$, we have
\begin{align*}
\|\nabla v\|_{\mathrm{BMO}_{r,+}(\Omega;{\mathbb{R}^n})}\lesssim\|\boldsymbol{g}\|_{L^\infty(\Omega;{\mathbb{R}^n})},
\end{align*}
which, together with \eqref{e5.19} and Lemma \ref{l2.3}, further implies that
\begin{align}\label{e5.20}
\|\nabla u\|_{L^1(\Omega;{\mathbb{R}^n})}&=\sup_{\|\boldsymbol{g}\|_{L^\infty(\Omega;{\mathbb{R}^n})}\le1}
\left|\int_{\Omega}\boldsymbol{g}(x)\cdot\nabla u(x)\,dx\right|
=\sup_{\|\boldsymbol{g}\|_{L^\infty(\Omega;{\mathbb{R}^n})}\le1}\left|\int_\Omega\boldsymbol{f}(x)\cdot\nabla v(x)\,dx\right|\\ \nonumber
&\lesssim\sup_{\|\boldsymbol{g}\|_{L^\infty(\Omega;{\mathbb{R}^n})}\le1}
\|\boldsymbol{f}\|_{H^1_z(\Omega;{\mathbb{R}^n})}\|\nabla v\|_{\mathrm{BMO}_{r}(\Omega;{\mathbb{R}^n})}
\lesssim\|\boldsymbol{f}\|_{H^1_z(\Omega;{\mathbb{R}^n})}.
\end{align}
This estimate, combined with the assumption $\int_{\Omega} u\,dx=0$ and the Sobolev--Poincar\'e
inequality, yields that $u\in W^{1,1}(\Omega)$.

Now, assume that $\boldsymbol{f}\in H^1_z(\Omega;{\mathbb{R}^n})$. Since
$H^1_z(\Omega)\cap L^2(\Omega)$ is dense in $H^1_z(\Omega)$ (see, for instance, \cite[p.\,109, Lemma]{St93}), it follows that
there exists a sequence $\{\boldsymbol{f}_m\}_{m=1}^\infty\subset H^1_z(\Omega;{\mathbb{R}^n})
\cap L^2(\Omega;{\mathbb{R}^n})$ such that
\begin{equation}\label{eq5.21}
\lim_{m\to\infty}\|\boldsymbol{f}_m-\boldsymbol{f}
\|_{H^1_z(\Omega;{\mathbb{R}^n})}=0.
\end{equation}
For any $m\in\mathbb{N}$, let $u_m\in W^{1,2}(\Omega)$ be the weak
solution of the Neumann problem \eqref{e1.6} with the right-hand side $\boldsymbol{f}_m$.
Assume also that, for any $m\in\mathbb{N}$, $\int_{\Omega} u_m\,dx=0$. By this, \eqref{e5.20},
and \eqref{eq5.21}, we see that $\{u_m\}_{m=1}^\infty$ is a Cauchy sequence in $W^{1,1}(\Omega)$.
Therefore, there exist a function $u\in W^{1,1}(\Omega)$ and a subsequence of $\{u_m\}_{m=1}^\infty$,
still denoted by $\{u_m\}_{m=1}^\infty$, such that $u_m\to u$ in $W^{1,1}(\Omega)$ as $m\to\infty$.
From this, \eqref{eq5.21}, and the fact that $H^1_z(\Omega)\hookrightarrow L^1(\Omega)$,
we further deduce that, for any $\varphi\in C^{\infty}({\mathbb{R}^n})$,
\begin{align*}
\int_{\Omega}A(x)\nabla u(x)\cdot \nabla\varphi(x)\,dx
=\int_{\Omega}\boldsymbol{f}(x)\cdot\nabla\varphi(x)\,dx,
\end{align*}
which implies that $u\in W^{1,1}(\Omega)$ is a weak solution of the Neumann problem
\eqref{e1.6} with $\boldsymbol{f}\in H^1_z(\Omega;{\mathbb{R}^n})$. By this and \cite[Theorem 1.2]{acmm10},
we conclude that the Neumann problem \eqref{e1.6} with $\boldsymbol{f}\in H^1_z(\Omega;{\mathbb{R}^n})$
is uniquely solvable. Furthermore, from \eqref{e5.20}, \eqref{eq5.21}, and $\lim_{m\to\infty}\|\nabla u_m-\nabla u
\|_{L^1(\Omega;{\mathbb{R}^n})}=0$, it follows that $\|\nabla u\|_{L^1(\Omega;{\mathbb{R}^n})}\lesssim
\|\boldsymbol{f}\|_{H^1_z(\Omega;{\mathbb{R}^n})}$.
This finishes the proof of the corollary.
\end{proof}

\section{Proof of Theorem \ref{th1.2}}\label{s6}
In this section, we prove Theorem \ref{th1.2} by using Theorem \ref{th1.1} and a perturbation method.
We begin with recalling the unique solvability of the Robin problem \eqref{e1.12} when $p=2$.

\begin{remark}\label{r6.1}
Similarly to Remark \ref{r1.1}, by the Lax--Milgram theorem and the Friedrichs inequality
(see, for instance, \cite[Section 1.1.8, Theorem 1.9]{j13} and \cite[Theorem 6.1]{ls04}),
we conclude that, when $p=2$, the Robin problem \eqref{e1.12} with $\boldsymbol{f}\in L^2(\Omega;{\mathbb{R}^n})$
is uniquely solvable and the weak solution $u$ satisfies
$$
\|u\|_{W^{1,2}(\Omega)}\lesssim\|\boldsymbol{f}\|_{L^2(\Omega;{\mathbb{R}^n})}
$$
with the implicit positive constant independent of $u$ and $f$ (see, for instance, \cite[Remark 1.2]{yyy21}).
\end{remark}

\begin{lemma}\label{l6.1}
Let $n\ge2$, $\Omega\subset{\mathbb{R}^n}$ be a bounded Lipschitz domain, and $\beta$ be the same as in \eqref{e1.11}.
Assume that $A$ and $\Omega$ satisfy Assumption \ref{a1.1}. Let $p\in(1,\infty)$.
Then the Robin problem \eqref{e1.12} with $\boldsymbol{f}\in L^p(\Omega;{\mathbb{R}^n})$ is uniquely solvable
and the weak solution $u$ satisfies
\begin{equation}\label{e6.1}
\|u\|_{W^{1,p}(\Omega)}\le C\|\boldsymbol{f}\|_{L^p(\Omega;{\mathbb{R}^n})},
\end{equation}
where $C$ is a positive constant independent of $u$ and $\boldsymbol{f}$.
\end{lemma}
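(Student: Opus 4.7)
The strategy is a perturbation argument from the corresponding $W^{1,p}$ estimate for the Neumann problem \eqref{e1.6}, which is available for all $p\in(1,\infty)$ under Assumption \ref{a1.1} (the hypothesis is strictly stronger than the VMO/$C^1$ assumption used, for instance, in \cite{aq02,dk10,dk12}). The plan is to absorb the Robin boundary term $\beta u$ by means of trace and interpolation inequalities, then combine the resulting a priori estimate with a standard compactness argument and an approximation to obtain unique solvability.

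Let $u\in W^{1,p}(\Omega)$ be a weak solution of \eqref{e1.12}. Testing \eqref{e1.13} with $\varphi\equiv 1$ yields the compatibility identity $\int_{\partial\Omega}\beta u\,d\sigma=0$, which allows one to construct (via a Bogovskii-type procedure, or equivalently by solving an auxiliary Neumann problem for the Laplacian on $\Omega$) a vector field $\boldsymbol{g}_u\in L^p(\Omega;\mathbb{R}^n)$ with $\mathrm{div}(\boldsymbol{g}_u)=0$ in $\Omega$, $\boldsymbol{g}_u\cdot\boldsymbol{\nu}=-\beta u$ on $\partial\Omega$, and $\|\boldsymbol{g}_u\|_{L^p(\Omega)}\lesssim\|u\|_{L^p(\partial\Omega)}$. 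Then $u$ is a weak solution of the Neumann problem \eqref{e1.6} with right-hand side $\boldsymbol{f}+\boldsymbol{g}_u$, and the Neumann $W^{1,p}$ estimate, combined with the interpolation trace inequality
\begin{equation*}
\|u\|_{L^p(\partial\Omega)}\le\varepsilon\|\nabla u\|_{L^p(\Omega;\mathbb{R}^n)}+C_{\varepsilon}\|u\|_{L^p(\Omega)}
\end{equation*}
and an absorption, would yield
\begin{equation*}
\|u\|_{W^{1,p}(\Omega)}\le C\|\boldsymbol{f}\|_{L^p(\Omega;\mathbb{R}^n)}+C\|u\|_{L^p(\Omega)}.
\end{equation*}

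To eliminate the lower-order term $\|u\|_{L^p(\Omega)}$, I would argue by contradiction, exploiting the compact Rellich embedding of $W^{1,p}(\Omega)$ into $L^p(\Omega)$: failure of \eqref{e6.1} produces (after rescaling) a nontrivial weak solution $u_\infty\in W^{1,p}(\Omega)$ of the homogeneous Robin problem. A Meyers-type higher-integrability estimate, iterated finitely many times, lifts $u_\infty$ to $W^{1,2}(\Omega)$, and Remark \ref{r6.1} then forces $u_\infty=0$, a contradiction. Existence is finally obtained by approximation: choose $\boldsymbol{f}_k\in L^2\cap L^p(\Omega;\mathbb{R}^n)$ with $\boldsymbol{f}_k\to\boldsymbol{f}$ in $L^p$, solve the corresponding Robin problem in $W^{1,2}$ via Remark \ref{r6.1}, use the just-proved a priori estimate (together with Meyers when $p>2$, so that the $W^{1,2}$ solutions automatically lie in $W^{1,p}$) to confirm that $\{u_k\}$ is Cauchy in $W^{1,p}(\Omega)$, and pass to the limit.

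The main obstacle is the construction of $\boldsymbol{g}_u$ with a sharp $L^p$ bound, since a general Lipschitz domain supports the required Neumann $L^p$ theory for the Laplacian only in a restricted range; however, the excess regularity of $\partial\Omega$ granted by Assumption \ref{a1.1}(b) extends the range to all $p\in(1,\infty)$. A secondary care point is the Meyers bootstrap when $p<2$: the gain in integrability per step is small, so one must iterate a bounded number of times to reach $W^{1,2}$.
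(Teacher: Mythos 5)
Your overall strategy---decompose into a Neumann problem with an added divergence-free field, absorb the boundary term via trace interpolation, and close by compactness and approximation---is reasonable and broadly compatible with the perturbation route the paper takes (the paper itself appeals to \cite{dl21} for this lemma and carries out a very similar decomposition in its proof of Theorem \ref{th1.2}). However, there are two points where your argument, as written, has gaps.

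First, the assertion that Assumption \ref{a1.1} is ``strictly stronger than the VMO/$C^1$ assumption'' is incorrect. Assumption \ref{a1.1}(b) with $\sigma(0^+)=0$ places $\nabla\psi$ in a Campanato/VMO-type class $\mathcal{L}^{\sigma(\cdot)}$, which does \emph{not} imply continuity of $\nabla\psi$; $\partial\Omega\in C^1$ and $\partial\Omega\in W^1\mathcal{L}^{\sigma(\cdot)}$ are incomparable conditions (cf.\ the remark after Theorem \ref{th1.1}, which only asserts $C^{1+\alpha}\Rightarrow$ Assumption \ref{a1.1}(b)). The correct route to the Neumann $W^{1,p}$ estimate under Assumption \ref{a1.1} is via the small-BMO/Reifenberg-flat results of \cite{bw05,dk10,dk12}: the hypotheses imply that $A$ has vanishing mean oscillation and $\partial\Omega$ is $\delta$-Reifenberg flat at small scales for every $\delta>0$. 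Quoting the wrong references is not merely cosmetic since they embody different proofs with different domain assumptions.

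Second, and more substantively, the Bogovskii construction of $\boldsymbol{g}_u$ with $\mathrm{div}\,\boldsymbol{g}_u=0$, $\boldsymbol{g}_u\cdot\boldsymbol{\nu}=-\beta u$, and $\|\boldsymbol{g}_u\|_{L^p(\Omega)}\lesssim\|u\|_{L^p(\partial\Omega)}$ is equivalent to solving the Laplacian Neumann problem with $L^p(\partial\Omega)$ boundary data and getting an interior $L^p$ gradient bound; this is a \emph{separate} ingredient from the divergence-form Neumann $W^{1,p}$ theory you invoke, and you acknowledge but do not close this gap. The paper's treatment (see the proof of Theorem \ref{th1.2}) avoids it entirely by duality: set $w:=u-v$ with $v$ the Neumann solution driven by $\boldsymbol{f}$, so that $w$ is the Neumann solution with data $-\beta u$, and then estimate $\|\nabla w\|_{L^p}$ by testing against $\boldsymbol{f}_1\in L^{p'}(\Omega;{\mathbb{R}^n})$ via the identity $\int_\Omega \boldsymbol{f}_1\cdot\nabla w\,dx=-\int_{\partial\Omega}\beta u\,u_1\,d\sigma$, where $u_1$ is the adjoint Neumann solution with right-hand side $\boldsymbol{f}_1$; one then uses the trace estimate $\|u_1\|_{L^{p'}(\partial\Omega)}\lesssim\|u_1\|_{W^{1,p'}(\Omega)}\lesssim\|\boldsymbol{f}_1\|_{L^{p'}(\Omega;{\mathbb{R}^n})}$. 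This requires only the already-assumed adjoint divergence-form Neumann $W^{1,p'}$ estimate, no auxiliary Laplacian boundary-value theory. Relatedly, the Meyers bootstrap you propose for $p<2$ uniqueness is unnecessarily delicate: testing against the adjoint Robin $W^{1,p'}$ solution (which lies in $W^{1,2}$ since $p'>2$) gives uniqueness immediately.
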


The proof of Lemma \ref{l6.1} is similar to that of \cite[Theorem 2.6]{dl21}; we omit the details.

Now, we show Theorem \ref{th1.2} by using Lemma \ref{l6.1} and Theorem \ref{th1.1}.

\begin{proof}[Proof of Theorem \ref{th1.2}]
We first prove (i). Let $\boldsymbol{f}\in \mathrm{BMO}_r(\Omega;{\mathbb{R}^n})$ and $u$ be the weak
solution to the Robin problem \eqref{e1.12}. By the fact that $\mathrm{BMO}_r(\Omega)\subset L^p(\Omega)$
with $p\in(n,\infty)$, Lemmas \ref{l2.1}, \ref{l2.2}, and \ref{l6.1}, and the Sobolev inequality,
we have $u\in L^\infty(\Omega)$ and
\begin{align}\label{e6.5}
\|u\|_{L^\infty(\Omega)}&\lesssim\|u\|_{W^{1,p}(\Omega)}\lesssim\|\boldsymbol{f}\|_{L^p(\Omega;{\mathbb{R}^n})}
\lesssim\left\|\boldsymbol{f}-(\boldsymbol{f})_\Omega\right\|_{L^p(\Omega;{\mathbb{R}^n})}+\|\boldsymbol{f}\|_{L^1(\Omega;{\mathbb{R}^n})}\\ \notag
&\lesssim\|\boldsymbol{f}\|_{\mathrm{BMO}(\Omega;{\mathbb{R}^n})}+\|\boldsymbol{f}\|_{L^2(\Omega;{\mathbb{R}^n})}\lesssim
\|\boldsymbol{f}\|_{\mathrm{BMO}_{r,+}(\Omega;{\mathbb{R}^n})}.
\end{align}
Let $v$ be a weak solution of the Neumann problem
\begin{equation*}
\left\{\begin{array}{ll}
\mathrm{div}(A\nabla v)=\mathrm{div}{\boldsymbol{f}}\ \ &\text{in}\ \Omega,\\
\dfrac{\partial v}{\partial\boldsymbol{\nu}}=\boldsymbol{f}\cdot\boldsymbol{\nu}\  &\text{on}\  \partial\Omega
\end{array}\right.
\end{equation*}
and $w:=u-v$. Then $w$ is a weak solution of the Neumann problem
\begin{equation}\label{e6.5a}
\left\{\begin{array}{ll}
\mathrm{div}(A\nabla w)=0\ \ &\text{in}\ \Omega,\\
\dfrac{\partial w}{\partial\boldsymbol{\nu}}=-\beta u\  &\text{on}\  \partial\Omega.
\end{array}\right.
\end{equation}
It is worth pointing out that the condition $\int_{\partial\Omega} \beta u\,d\sigma(x)=0$ is necessary for the solvability
of the Neumann problem \eqref{e6.5a}, and $\int_{\partial\Omega} \beta u\,d\sigma(x)=0$ follows from \eqref{e1.13}.
From Theorem \ref{th1.1}, we deduce that
\begin{align}\label{e6.6}
\|\nabla v\|_{\mathrm{BMO}_{r,+}(\Omega;{\mathbb{R}^n})}\lesssim\|\boldsymbol{f}\|_{\mathrm{BMO}_{r,+}(\Omega;{\mathbb{R}^n})}.
\end{align}
Now let $u_1$ be the weak solution of the Neumann problem \eqref{e1.6} with the coefficient matrix
$A^t$ and the right-hand side $\boldsymbol{f}_1\in H^1_z(\Omega;{\mathbb{R}^n})\cap L^2(\Omega;{\mathbb{R}^n})$, satisfying $\int_\Omega u_1\,dx=0$. By the Sobolev trace theorem (see \cite[Section 2.4.2, Theorem 4.2]{j13})
and Corollary \ref{c1.1}, we get
\begin{align}\label{e6.7}
\|u_1\|_{L^1(\partial\Omega)}\lesssim\|u_1\|_{W^{1,1}(\Omega)}\lesssim\|\nabla u_1\|_{L^1(\Omega;{\mathbb{R}^n})}
\lesssim\|\boldsymbol{f}_1\|_{H^1_z(\Omega;{\mathbb{R}^n})}.
\end{align}
Moreover, we have
\begin{align*}
\int_{\Omega}\boldsymbol{f}_1(x)\cdot\nabla w(x)\,dx=\int_{\Omega}A^t(x)\nabla u_1(x)\cdot \nabla w(x)\,dx
=-\int_{\partial\Omega}\beta(x)u(x)u_1(x)\,d\sigma(x),
\end{align*}
which, together with Lemma \ref{l2.3}, \eqref{e6.5}, and \eqref{e6.7}, further implies that
\begin{align}\label{e6.8}
\|\nabla w\|_{\mathrm{BMO}_r(\Omega;{\mathbb{R}^n})}&\sim\sup_{\|\boldsymbol{f}_1\|_{H^1_z(\Omega;{\mathbb{R}^n})}\le1}
\left|\int_{\Omega}\boldsymbol{f}_1(x)\cdot\nabla w(x)\,dx\right|\\ \nonumber
&\sim\sup_{\|\boldsymbol{f}_1\|_{H^1_z(\Omega;{\mathbb{R}^n})}\le1}\left|\int_{\partial\Omega}\beta(x)u(x)u_1(x)\,d\sigma(x)\right|\\ \nonumber
&\lesssim\sup_{\|\boldsymbol{f}_1\|_{H^1_z(\Omega;{\mathbb{R}^n})}\le1}
\|u\|_{L^\infty(\Omega)}\|u_1\|_{L^1(\partial\Omega)}\\ \nonumber
&\lesssim\sup_{\|\boldsymbol{f}_1\|_{H^1_z(\Omega;{\mathbb{R}^n})}\le1}\|\boldsymbol{f}\|_{\mathrm{BMO}_{r,+}(\Omega;{\mathbb{R}^n})}
\|\boldsymbol{f}_1\|_{H^1_z(\Omega;{\mathbb{R}^n})}\le\|\boldsymbol{f}\|_{\mathrm{BMO}_{r,+}(\Omega;{\mathbb{R}^n})}.
\end{align}
Similarly, we also have
$$
\|\nabla w\|_{L^2(\Omega;{\mathbb{R}^n})}\lesssim\|\boldsymbol{f}\|_{\mathrm{BMO}_{r,+}(\Omega;{\mathbb{R}^n})}.
$$
From this, \eqref{e6.8}, \eqref{e6.6}, and the fact that $\nabla u=\nabla v+\nabla w$, it follows that
$\nabla u\in \mathrm{BMO}_r(\Omega;{\mathbb{R}^n})$ and $\|\nabla u\|_{\mathrm{BMO}_{r,+}(\Omega;{\mathbb{R}^n})}
\lesssim\|\boldsymbol{f}\|_{\mathrm{BMO}_{r,+}(\Omega;{\mathbb{R}^n})}$. This finishes the proof of (i).
The proof of (ii) is similar to that of Corollary \ref{c1.1} and we omit the details here.
This finishes the proof of Theorem \ref{th1.2}.
\end{proof}

\medskip

\noindent\textbf{Data Availability Statement}\quad Data sharing is not applicable to this article as no
data sets were generated or analysed.

\medskip

\noindent\textbf{Conflict of interest}\quad All authors state no conflict of interest.

\bigskip

\noindent Hongjie Dong (Corresponding author)

\medskip

\noindent Division of Applied Mathematics, Brown University, Providence RI 02912, USA

\smallskip

\noindent {\it E-mail}: \texttt{Hongjie\_Dong@Brown.edu}

\bigskip

\noindent Dachun Yang

\medskip

\noindent Laboratory of Mathematics and Complex Systems (Ministry of Education of China),
School of Mathematical Sciences, Beijing Normal University, Beijing 100875,
People's Republic of China

\smallskip

\noindent {\it E-mail}: \texttt{dcyang@bnu.edu.cn}

\bigskip

\noindent Sibei Yang
\medskip

\noindent School of Mathematics and Statistics, Gansu Key Laboratory of Applied Mathematics
and Complex Systems, Lanzhou University, Lanzhou 730000, People's Republic of China

\smallskip

\smallskip

\noindent {\it E-mail}: \texttt{yangsb@lzu.edu.cn}

\hspace{0.888cm}


\begin{thebibliography}{99}

\bibitem{a92}P. Acquistapace, On BMO regularity for linear elliptic systems, Ann. Mat. Pura Appl. (4) 161 (1992),
231--269.
\vspace{-0.3cm}

\bibitem{acmm10} A. Alvino, A. Cianchi, V. G. Maz'ya and A. Mercaldo, Well-posed elliptic Neumann
problems involving irregular data and domains, Ann. Inst. H. Poincar\'e Anal. Non
 Lin\'eaire 27 (2010), 1017--1054.

\vspace{-0.3cm}

\bibitem{aq02}P. Auscher and M. Qafsaoui, Observations on $W^{1,p}$ estimates for divergence elliptic equations with
$\mathrm{VMO}$ coefficients, Boll. Unione Mat. Ital. Sez. B Artic. Ric. Mat. (8) 5 (2002), 487--509.

\vspace{-0.3cm}
\bibitem{bg18}M. Bolkart, Y. Giga, T. Suzuki and Y. Tsutsui, Equivalence of $\mathrm{BMO}$-type norms with
applications to the heat and Stokes semigroups, Potential. Anal. 49 (2018), 105--130.

\vspace{-0.3cm}
\bibitem{bc22} D. Breit, A. Cianchi,  L. Diening and S. Schwarzacher, Global Schauder estimates for the
$\mathrm{p}$-Laplace system, Arch. Rational Mech. Anal. 243 (2022), 201--255.

\vspace{-0.3cm}
\bibitem{bk95} S. Buckley and P. Koskela, Sobolev--Poincar\'e implies John,
Math. Res. Lett. 2 (1995), 577--593.

\vspace{-0.3cm}
\bibitem{b05} S.-S. Byun, Elliptic equations with BMO coefficients in Lipschitz domains,
Trans. Amer. Math. Soc. 357 (2005), 1025--1046.

\vspace{-0.3cm}
\bibitem{bw05} S.-S. Byun and L. Wang, The conormal derivative problem for elliptic equations with BMO coefficients
on Reifenberg flat domains, Proc. Lond. Math. Soc. (3) 90 (2005), 245--272.

\vspace{-0.3cm}
\bibitem{bw04} S.-S. Byun and L. Wang, Elliptic equations with BMO coefficients
in Reifenberg domains, Comm. Pure Appl. Math. 57 (2004), 1283-1310.

\vspace{-0.3cm}
\bibitem{C94} D.-C. Chang, The dual of Hardy spaces on a bounded domain in ${\mathbb{R}^n}$, Forum Math. 6 (1994), 65--81.

\vspace{-0.3cm}

\bibitem{cds99} D.-C. Chang, G. Dafni and E. M. Stein, Hardy spaces, BMO, and boundary
value problems for the Laplacian on a smooth domain in ${\mathbb{R}^n}$,
Trans. Amer. Math. Soc. 351 (1999), 1605--1661.

\vspace{-0.3cm}

\bibitem{cks93} D.-C. Chang, S. G. Krantz and E. M. Stein, $H^p$ theory on a smooth
domain in $R^N$ and elliptic boundary value problems, J. Funct. Anal. 114 (1993), 286--347.

\vspace{-0.3cm}
\bibitem{cjy16} X. Chen, R. Jiang and D. Yang, Hardy and Hardy--Sobolev spaces on strongly Lipschitz domains
and some applications, Anal. Geom. Metr. Spaces 4 (2016), 336--362.

\vspace{-0.3cm}
\bibitem{cm14} A. Cianchi and V. G. Maz'ya, Global boundedness of the gradient for a class of nonlinear
elliptic systems, Arch. Ration. Mech. Anal. 212 (2014), 129--177.

\vspace{-0.3cm}

\bibitem{d96} G. Di Fazio, $L^p$ estimates for divergence form elliptic equations with
discontinuous coefficients, Boll. Un. Mat. Ital. A (7) 10 (1996), 409--420.

\vspace{-0.3cm}
\bibitem{dks12}L. Diening, P. Kaplick\'y and P. Schwarzacher, BMO estimates for the $\mathrm{p}$-Laplacian,
Nonlinear Anal. 75 (2012), 637--650.

\vspace{-0.3cm}
\bibitem{d20} H. Dong, Recent progress in the $L^p$ theory for elliptic and parabolic equations with
discontinuous coefficients, Anal. Theory Appl. 36 (2020), 161--199.

\vspace{-0.3cm}
\bibitem{dek18}H. Dong, L. Escauriaza and S. Kim, On $C^1,C^2$, and weak type-$(1,1)$ estimates for linear elliptic
operators: part II, Math. Ann. 370 (2018), 447--489.

\vspace{-0.3cm}

\bibitem{dk18} H. Dong and D. Kim, On $L^p$-estimates for elliptic and parabolic equations
with $A_p$ weights, Trans. Amer. Math. Soc. 370 (2018), 5081--5130.

\vspace{-0.3cm}

\bibitem{dk12} H. Dong and D. Kim, The conormal derivative problem for higher order
elliptic systems with irregular coefficients, In: Recent Advances in Harmonic Analysis
and Partial Differential Equations, pp.\,69--97, Contemp. Math. 581, Amer. Math. Soc., Providence, RI, 2012.

\vspace{-0.3cm}

\bibitem{dk11} H. Dong and D. Kim, On the $L^p$-solvability of higher order parabolic and elliptic systems
with BMO coefficients, Arch. Ration. Mech. Anal. 199 (2011), 889--941.

\vspace{-0.3cm}
\bibitem{dk10}H. Dong and D. Kim, Elliptic equations in divergence form with partially BMO coefficients,
Arch. Ration. Mech. Anal. 196 (2010), 25--70.

\vspace{-0.3cm}
\bibitem{dk17}H. Dong and S. Kim,  On $C^1,C^2$, and weak type-$(1,1)$ estimates for linear elliptic
operators, Comm. Partial Differential Equations 42 (2017), 417--435.

\vspace{-0.3cm}
\bibitem{dlk20}H. Dong, J. Lee and S. Kim, On conormal and oblique derivative problem for elliptic equations
with Dini mean oscillation coefficients, Indiana Univ. Math. J. 69 (2020), 1815--1853.

\vspace{-0.3cm}
\bibitem{dl21} H. Dong and Z. Li, The conormal and Robin boundary value problems in nonsmooth domains
satisfying a measure condition, J. Funct. Anal. 281 (2021), Paper No. 109167, 32 pp.

\vspace{-0.3cm}
\bibitem{g12}J. Geng, $W^{1,p}$ estimates for elliptic problems with Neumann boundary conditions in Lipschitz
domains, Adv. Math. 229 (2012), 2427--2448.

\vspace{-0.3cm}
\bibitem{gg22}Y. Giga and Z. Gu, The Helmholtz decomposition of a space of vector fields with bounded mean
oscillation in a bounded domain, Math. Ann. 386 (2023), 673--712.

\vspace{-0.3cm}
\bibitem{g14a} L. Grafakos, Modern Fourier Analysis,
third edition, Graduate Texts in Mathematics 250, Springer, New York, 2014.

\vspace{-0.3cm}
\bibitem{hl}Q. Han and F. Lin, Elliptic Partial Differential Equations, Second edition,
Courant Lecture Notes in Mathematics 1, Courant Institute of Mathematical Sciences,
New York, American Mathematical Society, Providence, RI, 2011.

\vspace{-0.3cm}
\bibitem{i98}T. Iwaniec, The Gehring lemma, In: Quasiconformal Mappings and Analysis (Ann Arbor, MI, 1995),
pp.\,181--204, Springer, New York, 1998.

\vspace{-0.3cm}
\bibitem{j76}S. Janson, On functions with conditions on the mean oscillation, Ark. Mat. 14 (1976), 189--196.

\vspace{-0.3cm}
\bibitem{j80} P. W. Jones, Extension theorems for BMO, Indiana Univ. Math. J. 29 (1980), 41--66.

\vspace{-0.3cm}

\bibitem{k94} C. E. Kenig, Harmonic Analysis Techniques for Second Order Elliptic
Boundary Value Problems, CBMS Regional Conference Series in Mathematics 83,
American Mathematical Society, Providence, RI, 1994.

\vspace{-0.3cm}

\bibitem{k07} N. V. Krylov, Parabolic and elliptic equations with VMO coefficients,
Comm. Partial Differential Equations 32 (2007), 453--475.

\vspace{-0.3cm}
\bibitem{ls04} L. Lanzani and Z. Shen, On the Robin boundary condition for Laplace's
equation in Lipschitz domains, Comm. Partial Differential Equations 29 (2004), 91--109.

\vspace{-0.3cm}

\bibitem{l17} Y. Li, On the $C^1$ regularity of solutions to divergence form elliptic
systems with Dini-continuous coefficients, Chinese Ann. Math. Ser. B 38 (2017), 489--496.

\vspace{-0.3cm}
\bibitem{n08}E. Nakai, A generalization of Hardy spaces $H^p$ by using atoms, Acta Math. Sin. (Engl. Ser.)
24 (2008), 1243--1268.

\vspace{-0.3cm}
\bibitem{n97}E. Nakai, Pointwise multipliers on weighted BMO spaces, Studia Math. 125 (1997), 35--56.

\vspace{-0.3cm}
\bibitem{ny97}E. Nakai and K. Yabuta, Pointwise multipliers for functions of weighted bounded mean oscillation on
spaces of homogeneous type, Math. Japon. 46 (1997), 15--28.

\vspace{-0.3cm}
\bibitem{ny85}E. Nakai and K. Yabuta, Pointwise multipliers for functions of bounded mean oscillation,
J. Math. Soc. Japan 37 (1985), 207--218.

\vspace{-0.3cm}

\bibitem{j13} J. Ne$\mathrm{\check{c}}$as, Direct Methods in the Theory of Elliptic Equations,
Translated from the 1967 French original by Gerard Tronel and Alois Kufner,
Editorial coordination and preface by $\mathrm{\check{S}}$\'arka Ne$\mathrm{\check{c}}$asov\'a
and a contribution by Christian G. Simader, Springer Monographs in Mathematics, Springer, Heidelberg, 2012.

\vspace{-0.3cm}

\bibitem{s75} D. Sarason, Functions of vanishing mean oscillation, Trans. Amer. Math. Soc.
207 (1975), 391--405.

\vspace{-0.3cm}
\bibitem{sh18} Z. Shen, Periodic Homogenization of Elliptic Systems,
Operator Theory: Advances and Applications 269, Advances in Partial
Differential Equations (Basel), Birkh\"auser/Springer, Cham, 2018.

\vspace{-0.3cm}
\bibitem{sh05a} Z. Shen, Bounds of Riesz transforms on $L^p$ spaces for second order
elliptic operators, Ann. Inst. Fourier (Grenoble) 55 (2005), 173--197.

\vspace{-0.3cm}

\bibitem{St93} E. M. Stein, Harmonic Analysis: Real-variable
Methods, Orthogonality, and Oscillatory Integrals, Princeton
University Press, Princeton, NJ, 1993.

\vspace{-0.3cm}
\bibitem{ycyy20} S. Yang, D.-C. Chang, D. Yang and W. Yuan, Weighted gradient estimates for elliptic problems
with Neumann boundary conditions in Lipschitz and (semi-)convex domains, J. Differential Equations
268 (2020), 2510--2550.

\vspace{-0.3cm}
\bibitem{yyy21} S. Yang, D. Yang and W. Yuan, Weighted global regularity estimates for elliptic problems with
Robin boundary conditions in Lipschitz domains, J. Differential Equations 296 (2021), 512--572.

\end{thebibliography}
\end{document}